\title{Collapsing Calabi-Yau fibrations and uniform diameter bounds}
\author{Yang Li}
\date{\today}
\newtheorem{thm}{Theorem}[section]
\newtheorem{lem}[thm]{Lemma}
\newtheorem{cor}[thm]{Corollary}
\newtheorem{prop}[thm]{Proposition}
\theoremstyle{definition}
\newtheorem{eg}[thm]{Example}
\newtheorem{Setting}[thm]{Setting}
\newtheorem*{rmk}{Remark}
\newtheorem*{Acknowledgement}{Acknowledgement}
\newcommand{\ie}{\emph{i.e.} }
\newcommand{\cf}{\emph{cf.} }
\newcommand{\C}{\mathbb{C}}
\newcommand{\norm}[1]{\left\lVert#1\right\rVert}
\newcommand{\Lap}{\Delta}
\DeclareMathOperator{\Tr}{Tr}
\begin{document}
\maketitle

\begin{abstract}
As a sequel to \cite{Licollapsing},
we study Calabi-Yau metrics collapsing  along a holomorphic fibration over a Riemann surface. Assuming at worst canonical singular fibres, we prove a uniform diameter bound for all fibres in the suitable rescaling. This has  consequences on the geometry around the singular fibres.
\end{abstract}


\section{Introduction}

The present paper studies the adiabatic limiting behaviour of Ricci flat K\"{a}hler metrics on a Calabi-Yau manifold under the degeneration of the K\"{a}hler class. The basic setting is:
\begin{Setting}\label{Setting}
Let $(X, \omega_X)$ be an $n$-dimensional projective manifold with nowhere vanishing  holomorphic volume form $\Omega$, normalised to $\int_{X} i^{n^2}\Omega\wedge \overline{\Omega}=1$. Let $\pi: X\to Y$ be a holomorphic fibration onto a Riemann surface, with connected fibres denoted by $X_y$ for $y\in Y$, and without loss of generality $\int_{X_y} \omega_X^{n-1}=1$, and $\int_Y\omega_Y=1$. The singular fibres lie over the discriminant locus $S\subset Y$, and $\pi$ is a submersion over $Y\setminus S$. We assume the singular fibres are normal and have \emph{at worst canonical singularities}. Let $\omega_Y$ be a K\"ahler metric on $Y$, and let $\tilde{\omega}_t$ be the Calabi-Yau metrics on $X$ in the class of $\omega_t=t\omega_X+\pi^*\omega_Y$, for $0<t\ll 1$.
\end{Setting}

\begin{eg}
The most elementary examples are projective Calabi-Yau manifolds with Lefschetz fibrations over $\mathbb{P}^1$, for $n\geq 3$. The basic \emph{non-example} is a K3 surface with an elliptic fibration, such that the singular fibres are of type $I_1$.
\end{eg}

The wider question of collapsing Calabi-Yau metrics is intensely investigated by Tosatti and collaborators \cite{To}\cite{TosattiWeinkoveYang}\cite{TosattiZhang}\cite{GrossToZhang}\cite{GrossToZhang2}\cite{HeinTosatti}. Most of these works concentrate only on what happens away from the singular fibres. The author's previous work \cite{Licollapsing} recognized the importance of the \textbf{uniform fibre diameter bound} for the geometry near the singular fibres. This means
\begin{equation}\label{uniformdiameterbound}
\text{diam}(X_y, t^{-1}\tilde{\omega}_t) \leq C.
\end{equation}
with constants independent of the fibre $X_y$ and the collapsing parameter $t$. More precisely, we mean that any two points on $X_y$ can be joined by some path in $X$ (\emph{not necessarily contained in $X_y$}) whose $t^{-1}\tilde{\omega}_t$-length is uniformly bounded. The central result in \cite{Licollapsing} (modulo some technical generalizations) is essentially

\begin{thm}\label{GHconvergence}
In setting \ref{Setting}, we assume the uniform diameter bound (\ref{uniformdiameterbound}). Fix a singular fibre $X_0$ and a point $P\in X_0$, and let $Z$ be a pointed Gromov-Hausdorff subsequential limit of $(X, t^{-1}\tilde{\omega}_t, P)$. Assuming in addition that any holomorphic vector field on the regular part of $X_0$ vanishes, then $Z$ is isometric to $\bar{X}_0\times \C$ with the product metric, where we equip $\C$ with the Euclidean metric, and $\bar{X}_0$ stands for the metric completion of the singular Calabi-Yau metric on $X_0^{reg}$ in the class $[\omega_X]$. 
\end{thm}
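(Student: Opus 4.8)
The plan is to prove the theorem by splitting into two essentially independent assertions: first, that the limit $Z$ splits isometrically as a product $W \times \mathbb{C}$ for some metric space $W$, with the $\mathbb{C}$-factor coming from the base direction; second, that $W$ must be isometric to $\bar X_0$. Throughout I would work with the rescaled metrics $g_t = t^{-1}\tilde\omega_t$, in which the fibres have bounded diameter by hypothesis \eqref{uniformdiameterbound}, while the base directions are stretched by a factor $t^{-1/2}$ relative to the fibre directions. So from the point of view of $(X,g_t,P)$ with $P\in X_0$, the image $\pi(P)=0\in Y$ sees a neighbourhood that, after rescaling, looks like a larger and larger region of $\mathbb{C}$ with a flat metric (using a holomorphic coordinate on $Y$ and the fact that $\omega_Y$ is smooth hence asymptotically Euclidean at small scales).

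For the splitting, my first step would be to produce a distance function (or a harmonic/almost-harmonic function) on $(X,g_t)$ that plays the role of the pull-back of a linear coordinate on the base, with gradient of norm tending to $1$ and Hessian tending to $0$ in an appropriate integral sense, uniformly near $P$ on bounded scales. The natural candidate is $\pi^*u$ where $u$ is a local $\omega_Y$-harmonic coordinate near $0\in Y$; one checks that $|\nabla^{g_t}\pi^*u|^2 = t|\nabla^{\omega_Y}u|^2_{\text{fibre-corrected}}$ after the rescaling works out to be $O(1)$, and crucially the $g_t$-Laplacian of $\pi^*u$ is controlled because $\tilde\omega_t$ is Ricci-flat and $\pi$ is holomorphic — pulling back pluriharmonic data up a holomorphic map and pairing against a Calabi-Yau metric gives good bounds. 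Then I would invoke the splitting theorem machinery in the synthetic/Gromov-Hausdorff setting (Cheeger–Colding-type splitting, in the form adapted to limits of manifolds with Ricci bounded below, which holds here since Ricci-flatness is preserved and the rescaled metrics have $\mathrm{Ric}\ge 0$): an almost-linear function with the above properties on larger and larger balls forces the limit to split off an $\mathbb{R}^2=\mathbb{C}$ factor. This identifies $Z \cong W\times\mathbb{C}$ isometrically, with the projection to $\mathbb{C}$ being the limit of $\pi^*u$, hence compatible with $\pi$.

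For the identification $W\cong\bar X_0$, the idea is that the $W$-factor is the Gromov–Hausdorff limit of the fibres $X_{y_t}$ (for $y_t\to 0$) in the metric $g_t$ restricted appropriately, equivalently of the level sets $\{\pi^*u = \text{const}\}$. Away from the singular fibre this is governed by the known semi-flat / fibrewise-Calabi-Yau asymptotics (as in the work of Tosatti and collaborators cited in the excerpt, and in \cite{Licollapsing}): on $X^{reg}_0$ the rescaled metrics converge locally smoothly to the Calabi-Yau metric on $X_0^{reg}$ in the class $[\omega_X]$, because $t^{-1}\tilde\omega_t|_{X_0^{reg}}$ solves a complex Monge–Ampère equation converging to the one defining that metric. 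So $W$ contains an open dense subset isometric to $(X_0^{reg}, g_{CY})$, and the question is whether the metric completion added by the Gromov–Hausdorff limit matches the metric completion $\bar X_0$. Here the hypothesis that the singular fibres have at worst canonical singularities is what controls the behaviour near $X_0^{sing}$: canonical singularities give definite (integrable) bounds on the volume and the capacity near the singular set, so no extra "boundary at infinity" or collapsing can occur there, and the added points are exactly the metric-completion points of $(X_0^{reg},g_{CY})$. The hypothesis that the regular part of $X_0$ carries no nonzero holomorphic vector field enters precisely to rule out the alternative in the Cheeger–Colding analysis where the tangent-cone-like limit could acquire extra symmetry (extra splitting) along the fibre direction; combined with the splitting already extracted, it pins the $\mathbb{C}$-factor as the \emph{only} Euclidean factor and forces the $W$-factor to be exactly $\bar X_0$ with no further rescaling ambiguity.

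The main obstacle I expect is the non-collapsing and regularity control of the fibres uniformly up to the singular fibre — i.e., showing that the Gromov–Hausdorff limit of $(X_{y_t}, g_t)$ really is $\bar X_0$ and not some degeneration, bubble, or partial collapse near $X_0^{sing}$. This is where one genuinely needs the canonical-singularities hypothesis together with a priori estimates (Kołodziej-type $L^\infty$ bounds, volume comparison, and the absence of holomorphic vector fields to kill translational instabilities). Matching the metric completion $\bar X_0$ with the GH limit requires showing both that every Cauchy sequence in $(X_0^{reg},g_{CY})$ has a limit point appearing in $Z$ (relatively easy, by approximation) and that $Z$ contains no spurious extra points (harder — needs that the $g_t$-distance from $P$ to $X_0^{sing}$ stays bounded and that small $g_{CY}$-balls around singular points are not "inflated" in the limit, which follows from uniform volume lower bounds coming from the canonical singularity condition). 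The splitting step itself is comparatively standard once the almost-linear function is in hand; the subtlety there is purely in verifying the Hessian decay of $\pi^*u$, which reduces to the Ricci-flatness of $\tilde\omega_t$ and a Bochner argument.
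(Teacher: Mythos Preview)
Your two-step outline (split off a $\C$-factor, then identify the cross-section) is the right shape, but two of your key steps misfire in ways that leave real gaps.

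First, you misidentify the role of the no-holomorphic-vector-field hypothesis. It is \emph{not} used to rule out ``extra splitting along the fibre direction.'' In the paper's argument, once $du$ is shown to be parallel on $X_0^{reg}\times\C$ (this comes from a Chern--Lu based concentration estimate and an integration-by-parts gradient bound, not a bare Bochner identity), the complexified Hamiltonian construction $\iota_V\omega_\infty=d\bar u$ produces a holomorphic vector field $V$ on $X_0^{reg}\times\C$. A priori $V$ has a nonzero $TX_0^{reg}$-component, which would mean the horizontal distribution of the Riemannian submersion is twisted relative to the complex product $X_0^{reg}\times\C$, so that $\omega_\infty$ equals $\omega_{SRF,0}+\omega_\C$ plus off-diagonal terms. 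The hypothesis kills the fibre component of $V$ and forces $V=\lambda\,\partial_u$ with $\lambda$ constant; only then is $\omega_\infty$ the genuine Riemannian product. Your abstract Cheeger--Colding splitting $Z\cong W\times\R^2$ does not by itself align the $\R^2$-factor with the base $\C$ as complex manifolds, and this alignment is precisely what the hypothesis supplies.

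Second, and more seriously, you are missing the geometric convexity step. Knowing $\omega_\infty=\omega_{SRF,0}+\omega_\C$ on the open dense set $X_0^{reg}\times\C\subset Z$ only bounds the $Z$-distance \emph{above} by the product distance. To conclude $Z$ is isometric to the metric completion $\bar X_0\times\C$, one must show that minimal geodesics in $Z$ cannot shortcut through the singular set $\{H=0\}$. This is not about ``spurious extra points'' (the full-measure property already excludes those), nor does it follow from volume lower bounds near canonical singularities as you suggest. The paper handles it by constructing cutoff functions $\rho_\lambda$ on $X_0^{reg}$ with arbitrarily small Dirichlet energy, applying the coarea formula to extract a separating hypersurface of small $\omega_{SRF,0}$-area, transferring this to small $t^{-1}\tilde\omega_t$-area via the smooth convergence on $\{H\gtrsim 1\}$, and then using a Bishop--Gromov comparison to show that for any target point, some nearby minimal geodesic avoids the hypersurface and hence stays in $\{H>\delta\}$. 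Without this, the identification $W\cong\bar X_0$ as \emph{metric spaces} is unjustified.
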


A detailed review of the main steps of \cite{Licollapsing} will be given in section \ref{Outline} (partly because some intermediate conclusions are useful, and partly for technical generalizations). It was also observed in \cite{Licollapsing} that in some special cases the uniform fibre diameter bound can be implied by a conjectural H\"older bound on the K\"ahler potential uniformly on the fibres, and the main evidence in \cite{Licollapsing} is a nontrivial diameter bound for nodal K3 fibres. While this H\"older bound strategy has recently found a number of interesting applications (eg. \cite{Guo}\cite{TianZhang}), the conjecture remains hitherto unresolved, due to the difficulty of complex structure/K\"ahler class degeneration.

This paper is to present a clean uniform proof of 

\begin{thm}\label{uniformdiameterboundthm}
In the setting \ref{Setting}, the uniform fibre diameter bound (\ref{uniformdiameterbound}) holds.
\end{thm}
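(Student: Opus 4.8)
The plan is to reduce the uniform diameter bound to a priori estimates on the Calabi-Yau potential that are robust under complex-structure degeneration. Write $\tilde\omega_t = \omega_t + i\partial\bar\partial\varphi_t$ for the Calabi-Yau potential $\varphi_t$ normalized appropriately; the Monge-Ampère equation is $(\omega_t + i\partial\bar\partial\varphi_t)^n = c_t t^{n-1} \Omega\wedge\bar\Omega$ where $c_t\to$ const is fixed by the cohomological normalization. Rescaling by $t^{-1}$ turns fibres into "unit size" objects, and the fibre metric $t^{-1}\tilde\omega_t|_{X_y}$ is comparable, up to the Hessian of $\varphi_t$ along the fibre, to $\omega_X|_{X_y}$, whose diameter is already bounded (for the smooth fibres) by standard estimates on the semi-Ricci-flat / fibrewise Calabi-Yau metric. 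The key point is therefore to control the oscillation and the local Hölder behaviour of $\varphi_t$ restricted to a fixed fibre, \emph{uniformly in $y$ and $t$}, including across the discriminant locus $S$.

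\textbf{First} I would set up the comparison geometry: pull back everything near a singular fibre $X_0$ to a local model over a disc $\Delta\subset Y$, and use the canonical-singularities hypothesis to produce a \emph{bounded, continuous} reference background. Concretely, because $X_0$ has canonical singularities, after a log-resolution or by working with the relative canonical bundle one obtains that $\Omega\wedge\bar\Omega$, viewed as a measure on the total space, has density with at worst mild (integrable, indeed $L^p$ for some $p>1$) singularities relative to a fixed smooth volume form — this is exactly where "at worst canonical" is essential, and it is the analogue of the $I_1$ K3 non-example failing. \textbf{Then} the engine is an $L^\infty$-estimate for the complex Monge-Ampère equation with the right scaling in $t$: I would invoke a Ko{\l}odziej-type (or the newer Guo-Phong-Tong-style) estimate, carefully tracking that the bound depends only on the $L^p$-norm of the density and on the cohomology classes, \emph{not} on lower bounds of the metric or the injectivity radius — so that it survives collapsing. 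This yields $\|\varphi_t\|_{L^\infty} \le C$ and, more importantly, a modulus of continuity for $\varphi_t$ that does not degenerate.

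\textbf{Next}, to convert potential control into a diameter bound I would argue as in the standard "distance distortion from potential oscillation" lemma: if two points $p,q\in X_y$ can be joined by an $\omega_t$-geodesic (or just a smooth path) of controlled $\omega_t$-length, then the $\tilde\omega_t$-length is controlled in terms of $\sup\varphi_t - \inf\varphi_t$ along a suitable family of competitor paths — crucially using that we are allowed to leave the fibre, so we can route the path through a region where $i\partial\bar\partial\varphi_t$ is small, or use a Moser-type iteration / Trudinger-inequality argument to bound $\int |\nabla\varphi_t|$ on slices. For the smooth fibres the background $t^{-1}\omega_t$-diameter is bounded by the fibrewise-CY metric estimates of Setting \ref{Setting}; for fibres near $S$ one approximates by nearby smooth fibres and uses that the potential estimate is uniform up to $S$, together with the fact that $\bar X_0$ (the metric completion appearing in Theorem \ref{GHconvergence}) has finite diameter — itself a consequence of canonical singularities via the same $L^p$ density bound applied on $X_0$ directly.

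\textbf{The main obstacle} I expect is precisely the uniformity of the Monge-Ampère estimate \emph{across the discriminant locus and under the simultaneous collapse $t\to 0$}: one must prevent the constant in the Ko{\l}odziej/Guo-Phong-Tong estimate from blowing up either as the fibre becomes singular (complex-structure degeneration, which is what the H\"older-potential conjecture mentioned in the introduction was meant to handle) or as the fibre metric collapses in the base directions. My strategy for this would be to not estimate $\varphi_t$ against the collapsing metric $\omega_t$ directly, but against a fixed (non-collapsing) metric $\omega_X$ fibrewise, absorbing the base direction into the "volume/measure" side of the equation and into an auxiliary function pulled back from $Y$; the residual difficulty — controlling the Monge-Ampère measure uniformly near $S$ — is then handled by the canonical-singularities hypothesis, which is exactly the hypothesis that makes the relative volume form locally bounded in $L^p$ uniformly in $t$. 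If that uniform $L^p$ bound can be established (I would prove it by an explicit local computation in adapted coordinates near each type of canonical singularity, or by semicontinuity of log canonical thresholds in families), the rest is a fairly mechanical assembly of known estimates.
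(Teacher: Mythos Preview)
Your proposal has a genuine gap at its core. The strategy you outline --- obtain a uniform modulus of continuity (H\"older bound) for $\varphi_t$ from a Ko{\l}odziej/Guo--Phong--Tong-type estimate, then convert potential control to a diameter bound --- is precisely the ``H\"older bound strategy'' that the paper's introduction explicitly identifies as an \emph{open conjecture}. The $L^\infty$ bound $\|\varphi_t\|_{L^\infty}\leq C$ and even the fibrewise oscillation bound $\text{osc}_{X_y}\varphi_t\leq Ct$ are already established in the paper (Prop.~\ref{EGZbound} and the Lemma following (\ref{fibrevolumeupperbound})), and the paper remarks after Cor.~\ref{smoothboundfibrewise} that these pointwise estimates do \emph{not} imply (\ref{uniformdiameterbound}), because near the $\pi$-critical locus $\tilde\omega_t$ and $\omega_t$ are far from equivalent ($\tilde\omega_t\leq CH^{-1}\omega_t$ with $H\to 0$). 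What you would need is the uniform H\"older continuity of $\varphi_t$ across complex-structure degeneration of the fibres; the $L^p$ density bound coming from canonical singularities is an input to Ko{\l}odziej-type estimates, but the constants in those H\"older estimates depend on the background geometry in ways that are not known to survive both the collapse $t\to 0$ and the approach to $S$. You acknowledge this as ``the main obstacle'' but your proposed resolution (absorb the base into the measure side, estimate against $\omega_X$ fibrewise) does not actually produce the missing H\"older bound --- it reproduces the already-known fibrewise oscillation estimate. Your ``distance distortion from potential oscillation'' step is likewise not a standard lemma in this setting and you have not indicated how it would be made rigorous.

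The paper's actual argument is entirely different and bypasses the H\"older conjecture. It has two ingredients: (i) a \emph{uniform exponential integrability} of the fibrewise distance function, proved by projecting $X_y$ finitely onto $\mathbb{P}^{n-1}$, using the Lelong-number monotonicity formula to bound $\int |\nabla \rho_F|^2$ on small balls, and then John--Nirenberg (the Tian--Yau $\alpha$-invariant technique); this shows that the set of points far from the regular region $\{H\gtrsim 1\}$ has exponentially small measure. (ii) A Bishop--Gromov argument applied to a critically chosen ball: for any $P\in X_y$, let $r$ be minimal with $\text{dist}(B(P,r),\{H\gtrsim 1\})\leq r$; step (i) forces $\text{Vol}(B(P,r))\lesssim r^2 e^{-r/C}$, while $B(P,3r)$ touches the regular region so $\text{Vol}(B(P,3r))\gtrsim r^2$, and Ricci-flatness plus Bishop--Gromov then force $r\lesssim 1$. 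Neither step involves H\"older control of $\varphi_t$.
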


This combined with Theorem \ref{GHconvergence} has implication on the pointed Gromov-Hausdorff limit around singular fibres.

\begin{rmk}
It should be emphasized that for the uniform fibre diameter bound to hold, the `at worst canonical singular fibre' assumption is \emph{necessary}, at least if $[\omega_X]$ is a rational class. This is because on any smooth fibre $X_y$, the rescaled fibrewise metric $t^{-1}\tilde{\omega}_t$ converges smoothly to the unique Calabi-Yau metric $\omega_{SRF,y}$ on $(X_y, [\omega_X])$ as $t\to 0$, with convergence rate depending on $y\in Y\setminus S$ \cite{To}\cite{TosattiWeinkoveYang}. Thus the uniformity in both $t$ and $y$ will imply a uniform diameter bound for all $\omega_{SRF,y}$ in all $y\in Y\setminus S$, which is known to be equivalent to the `at worst canonical singularity' condition, assuming the rest of setting \ref{Setting}  and in addition that $[\omega_X]$ is an integral class up to a constant multiple \cite{Takayama}. For instance, this uniform fibre diameter bound is \emph{not true} around nodal elliptic curve fibres on a K3 surface.	
\end{rmk}

\begin{rmk}
In the motivating case \cite{Licollapsing} of Calabi-Yau 3-folds with Lefschetz K3 fibrations, the uniform diameter bound and the Gromov-Hausdorff convergence statements are consequences of the author's gluing construction \cite{Ligluing}. It is very plausible that a similar construction can be made for higher dimensional Lefschetz fibrations. But it seems unlikely that a gluing strategy can work in the full generality of at worst canonical singularities.
\end{rmk}

The strategy for the uniform fibre diameter bound has two main new ingredients. The first is a uniform exponential integrability of the distance function on the fibres, which amounts to proving the uniform fibre diameter bound modulo a set of exponentially small measure. This method (\cf Theorem \ref{Uniformexpintegrabilitythm}) is of very general nature and has its independent interest. The second is a judicious application of Bishop-Gromov monotonicity to a critically chosen ball, which prevents a subset of exponentially small measure staying far from the rest of the manifold.

\begin{Acknowledgement}
The author is a 2020 Clay Research Fellow, based at MIT. He thanks Valentino Tosatti for comments.
\end{Acknowledgement}

\section{Outline: from diameter bound to GH limit}\label{Outline}

We now give an outline of Thm. \ref{GHconvergence} largely following \cite{Licollapsing} concerning how to identify the pointed  Gromov-Hausdorff limit of the neighbourhood of the (at worst canonical) singular fibre, in setting \ref{Setting}, assuming the uniform diameter bound (\ref{uniformdiameterbound}). The key is that the uniform diameter bound implies a local non-collapsing condition around any given fibre, which enables the application of many standard geometric analysis arguments, in particular Cheeger-Colding theory.

As useful background facts, 
\begin{prop}\cite{GrossToZhang}\cite{Zhang}\cite{Takayama}\label{atworstcanonical}
Assume the setting \ref{Setting}.
Then
\begin{enumerate}
\item The relative holomorphic volume form $\Omega_y$ defined by $\Omega=\Omega_y\wedge dy$ satisfies the uniform bound
$A_y=\int_{X_y}  i^{(n-1)^2} \Omega_y\wedge \overline{\Omega}_y\leq C$ for all $y$ around any given singular fibre. In fact $A_y$ is continuous in $y$.

\item
The unique Calabi-Yau metrics $\omega_{SRF,y}$ on $X_y$ in the class $[\omega_X]$ have uniformly bounded diameters independent of $y$, or equivalently, these metrics are uniformly volume non-collapsed.

\item
There exists $p>1$ such that
\[
\int_{X_y} |\frac{  \Omega_y\wedge \overline{\Omega}_y }{ \omega_X^{n-1} }|^p \omega_X^{n-1} \leq C.
\]
for all $y$ around a given singular fibre.

\end{enumerate}
Morever, the Calabi-Yau metrics $\omega_{SRF,y}$ are continuous in $y$ in the Gromov-Hausdorff topology, including around singular fibres, where $\omega_{SRF,y}$ is understood as the metric completion of the regular locus for the singular Calabi-Yau metric constructed in \cite{EGZ}.

\end{prop}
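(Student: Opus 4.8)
\emph{Proof strategy.} The plan is to deduce everything from part (3), the uniform $L^p$-integrability of the fibrewise volume density near a singular fibre, which carries the analytic weight; parts (1)--(2) and the Gromov--Hausdorff continuity then follow from H\"older's inequality, dominated convergence, standard pluripotential theory, and Bishop--Gromov comparison. Fix $y_0\in S$ and a holomorphic coordinate $y$ on a small disc $D\subset Y$ centred at $y_0$, so that $\pi^*dy$ trivialises $\pi^*K_Y$ over $\pi^{-1}(D)$ and $\Omega_y:=\Omega/\pi^*dy$ is a nowhere-vanishing holomorphic section of $K_{X/Y}$ there. Since $X_{y_0}$ is a reduced normal Cartier divisor, adjunction identifies $K_{X/Y}|_{X_{y_0}}$ with the dualizing sheaf of $X_{y_0}$, so $\Omega_{y_0}|_{X_{y_0}}$ is a generator of $K_{X_{y_0}}$; the `at worst canonical' hypothesis is exactly the statement that on a log resolution of $X_{y_0}$ its pullback is a holomorphic section of the canonical bundle, vanishing to the (nonnegative) discrepancy orders.

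First I would prove part (3) with $p>1$ and $C$ uniform in $y\in D$. Take a log resolution $F:\mathcal W\to\pi^{-1}(D)$ of the pair $(\pi^{-1}(D),X_{y_0})$, an isomorphism over the complement of $X_{y_0}$ and refined so that near $\mathcal W_{y_0}=F^{-1}(X_{y_0})$ the pullbacks of both $i^{(n-1)^2}\Omega_y\wedge\overline\Omega_y$ and $\omega_X^{n-1}$ are, fibrewise and in suitable local charts, a nowhere-vanishing smooth density times a monomial in the exceptional equations. Since $X$ is smooth and $\Omega$ nowhere zero, $F^*\Omega$ has nonnegative vanishing orders along the $F$-exceptional divisors; dividing by $(\pi\circ F)^*dy$ and restricting to a fibre, the residue computation together with the canonical bundle formula for the fibration shows that the monomial exponents $2a_\ell$ occurring in $F^*(i^{(n-1)^2}\Omega_y\wedge\overline\Omega_y)|_{\mathcal W_y}$ along the exceptional divisors over $y_0$ are twice the discrepancies of $X_{y_0}$ --- hence $a_\ell\ge0$ by the canonical hypothesis, and in any case $a_\ell>-1$ --- and, crucially, these exponents depend only on the fixed resolution data over $y_0$, not on $y$. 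The $L^p$-norm in part (3) then reduces on $\mathcal W$ to a finite sum of monomial integrals $\int\prod_\ell|z_\ell|^{\,2p a_\ell+2(1-p)c_\ell}$ over polydiscs, where the $c_\ell\ge0$ are the vanishing orders of $\omega_X^{n-1}$; each converges for every $p$ in a right neighbourhood of $1$ precisely because $a_\ell>-1$, uniformly in $y\in D$. The single-fibre case ($y=y_0$) is the integrability input to the Eyssidieux--Guedj--Zeriahi construction on a canonical model; the genuinely new --- and in my judgement hardest --- point is this uniformity as $y\to y_0$, which is Takayama's theorem \cite{Takayama} and is where the canonical hypothesis is indispensable.

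Granting part (3), part (1) is short: H\"older's inequality with $\int_{X_y}\omega_X^{n-1}=1$ gives $A_y\le C$ on $D$, so $A_{y_0}<\infty$, and for continuity I would write $A_y=\int_{\mathcal W_y}i^{(n-1)^2}F^*\Omega_y\wedge\overline{F^*\Omega_y}$, note that the integrand varies real-analytically in $y$ in the fixed charts on $\mathcal W$ and is dominated, after identifying nearby fibres off a null set, by the fixed $L^1$ majorant furnished by part (3), and invoke dominated convergence; since $A_{y_0}>0$, $A_y$ is also bounded away from $0$ near $y_0$. For part (2), for $y\in D\setminus S$ the Ricci-flat K\"ahler metric $\omega_{SRF,y}\in[\omega_X]$ exists by Yau's theorem and, with the normalisations above, solves $(\omega_X+i\partial\bar\partial\varphi_y)^{n-1}=A_y^{-1}\,i^{(n-1)^2}\Omega_y\wedge\overline\Omega_y$ with $\int_{X_y}\omega_{SRF,y}^{n-1}=\int_{X_y}\omega_X^{n-1}=1$; by part (3) the right-hand side has $L^p$-density ($p>1$) with respect to $\omega_X^{n-1}$, with normalised total mass and with $A_y^{-1}$ bounded above and below uniformly in $y$, so the Ko\l{}odziej / Eyssidieux--Guedj--Zeriahi \cite{EGZ} a priori estimate gives $\|\varphi_y\|_{L^\infty(X_y)}\le C$ uniformly. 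Combined with $\mathrm{Ric}(\omega_{SRF,y})=0$ and the fixed total volume, this yields a uniform diameter bound (via the diameter estimates for complex Monge--Amp\`ere equations with uniform $L^p$ data in a fixed cohomology class, or via the convergence described next), and Bishop--Gromov comparison makes the uniform diameter bound equivalent to uniform volume non-collapse.

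Finally, for the Gromov--Hausdorff continuity across $S$: as $y\to y_0$ the complex manifolds $X_y$ degenerate to $X_{y_0}$ with its canonical singularities, the Monge--Amp\`ere densities converge in $L^p$ (by parts (1) and (3)) to the density defining the singular Calabi--Yau metric of \cite{EGZ} on $X_{y_0}^{reg}$, and the uniform a priori estimates upgrade this to Gromov--Hausdorff convergence of $(X_y,\omega_{SRF,y})$ to the compact metric completion $\bar{X}_{y_0}$; this is the substance of \cite{GrossToZhang}\cite{Zhang}\cite{Takayama}. Together with the smooth dependence of $\omega_{SRF,y}$ on $y$ over $Y\setminus S$ and compactness of $Y$, it promotes the local statements above to the claimed global bounds, in particular the uniform diameter bound for all $\omega_{SRF,y}$.
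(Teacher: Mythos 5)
Note first that the paper does not actually prove Proposition~\ref{atworstcanonical}: it is quoted from \cite{GrossToZhang}\cite{Zhang}\cite{Takayama}, with only a remark indicating that the $L^p$ bound ``can be seen by passing to a log resolution'', that the uniform diameter bound follows by the technique of \cite{Zhang}, and that its equivalence with the at-worst-canonical hypothesis is \cite{Takayama}, via Donaldson--Sun theory \cite{DonSun}. Your treatment of items (1) and (3) follows exactly the indicated route: pull back to a fixed log resolution over a disc around the singular value, relate the fibrewise vanishing orders to the discrepancies, deduce uniform $L^p$ integrability for $p$ close to $1$, and then get boundedness and continuity of $A_y$ from H\"older and dominated convergence. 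This is consistent with the paper's sketch, with the usual caveats that the pulled-back densities are only uniformly comparable to monomials in suitable charts, and that for $y\neq y_0$ the fibre $\mathcal{W}_y$ misses the exceptional divisors entirely, so the uniformity is really a statement about integration along the fibres of a normal crossings map near $\mathcal{W}_{y_0}$ --- a point you correctly flag as the substantive input from \cite{Takayama}.

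The genuine gap is in item (2). From the uniform $L^p$ bound you obtain, via Ko{\l}odziej/\cite{EGZ2}, a uniform $L^\infty$ bound on the fibrewise potentials $\varphi_y$; but an $L^\infty$ potential bound together with Ricci-flatness and normalised volume does \emph{not} by itself bound the diameter of $(X_y,\omega_{SRF,y})$ --- that implication is precisely the nontrivial content of the cited references. Neither of your parenthetical escape routes closes it: ``diameter estimates for complex Monge--Amp\`ere equations with uniform $L^p$ data'' are themselves substantial independent theorems (cf.\ \cite{JianSong}), not consequences of what you have established; and ``via the convergence described next'' is circular, since the Gromov--Hausdorff continuity of $y\mapsto (X_y,\omega_{SRF,y})$ across $S$ in \cite{Zhang}\cite{GrossToZhang} is proved \emph{using} uniform non-collapsing/diameter control, not prior to it. To make item (2) self-contained one needs an argument of the type in \cite{Zhang}: show that a definite fraction of the volume of $X_y$ lies in a region where the metric is uniformly controlled (e.g.\ where the density is bounded), hence within bounded distance of a fixed point, and then use relative volume (Bishop--Gromov) comparison to rule out a small-volume piece sitting far away --- or simply cite the diameter statement, as the paper in effect does. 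As written, the deduction ``$L^\infty$ bound $\Rightarrow$ diameter bound'' is a gap; the final equivalence with volume non-collapse via Bishop--Gromov is, by contrast, fine.
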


\begin{rmk}
The $L^p$ volume bound can be seen by passing to a log resolution. The uniform diameter bound is proved by the technique of \cite{Zhang}, and under the projective class condition it is known to be equivalent to the at worst canonical singular fibre assumption \cite{Takayama}, as an application of Donaldson-Sun theory \cite{DonSun}.
\end{rmk}

\subsection{Basic setup and pointwise estimates}

Write the Calabi-Yau metric in terms of the potential $\phi$ depending on $t$:
\[
\tilde{\omega}_t=\omega_t+\sqrt{-1}\partial\bar{\partial}\phi, \quad \omega_t=t\omega_X+ \pi^* \omega_Y.
\]
The Calabi-Yau condition for $\tilde{\omega}_t$ reads
\begin{equation}\label{CalabiYaucondition}
\tilde{\omega}_t^n=a_t t^{n-1} i^{n^2}\Omega\wedge \overline{\Omega},
\end{equation}
where $a_t$ is a cohomological constant.
Under the normalisation $\int i^{n^2}\Omega\wedge \overline{\Omega}=1$, and $\int_{X_y} [\omega_X]^{n-1}=1$, and since the base is 1-dimensional,
\begin{equation}
a_t=\sum_{k=0}^{1} \pi^*[\omega_Y]^k\cdot[\omega_X]^{n-k} {n\choose k} t^{1-k}.
\end{equation}
In the limit $a_t$ converges to $
a_0=    n \int_Y \omega_Y =n .$

From complex pluripotential theory,
\begin{prop}\label{EGZbound}
	\cite{EGZ2}\cite{De-Pa} There is a uniform constant such that
	$
	\norm{\phi}_{L^\infty} \leq C.
	$
\end{prop}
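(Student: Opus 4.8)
The plan is to recast the Calabi--Yau equation (\ref{CalabiYaucondition}) as a complex Monge--Amp\`ere equation with a \emph{fixed} right-hand side but a \emph{collapsing} reference form, identify the structural facts that keep this collapse harmless, and then appeal to the a priori $L^\infty$ estimates for degenerating Monge--Amp\`ere equations of \cite{EGZ2}\cite{De-Pa}.

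\textbf{Step 1 (normalisation).} Put $V_t:=\int_X\omega_t^n=a_tt^{n-1}$ and let $\mu:=i^{n^2}\Omega\wedge\overline{\Omega}$, a fixed smooth positive probability measure on the smooth manifold $X$. Dividing (\ref{CalabiYaucondition}) by $V_t$ turns the equation into
\[
\frac1{V_t}\,(\omega_t+\sqrt{-1}\partial\bar\partial\phi)^n=\mu,
\]
whose right-hand side is independent of $t$. It suffices to treat the normalisation $\sup_X\phi=0$; the bound for any other normalisation then follows by the standard argument controlling $\sup_X\phi$ from $[\tilde\omega_t]=[\omega_t]$. Since $0<t\le1$ forces $\omega_t\le\omega_1:=\omega_X+\pi^*\omega_Y$, the potential $\phi$ is $\omega_1$-psh, hence lies in the compact family of $\omega_1$-psh functions with $\sup=0$; the \emph{fixed} metric $\omega_1$ then supplies the uniform bounds $\norm{\phi}_{L^1(\omega_1^n)}\le C$ and $\mathrm{Vol}_{\omega_1^n}(\{\phi<-s\})\le Ce^{-\alpha s}$ (Skoda), together with a fixed Monge--Amp\`ere capacity $\mathrm{Cap}_{\omega_1}$.

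\textbf{Step 2 (the collapse is tame).} Three facts: (i) as $(\pi^*\omega_Y)^2=0$, the binomial expansion gives $\omega_t^n=t^n\omega_X^n+n\,t^{n-1}\,\omega_X^{n-1}\wedge\pi^*\omega_Y$, hence $a_t=[\omega_X]^n\,t+n$ and $n\le a_t\le C$ for $0<t\le1$; (ii) consequently the probability measures $\nu_t:=V_t^{-1}\omega_t^n=a_t^{-1}(t\,\omega_X^n+n\,\omega_X^{n-1}\wedge\pi^*\omega_Y)$ obey $c\,\omega_X^{n-1}\wedge\pi^*\omega_Y\le\nu_t\le C\,\omega_1^n$ uniformly in $t$; (iii) writing $f_t:=\mathrm d\mu/\mathrm d\nu_t$ (so $\tilde\omega_t^n=f_t\,\omega_t^n$ and $\int_Xf_t\,\nu_t=1$), the density satisfies $\norm{f_t}_{L^p(\nu_t)}\le C$ for the exponent $p>1$ of Proposition \ref{atworstcanonical}(3): using $\omega_t^n\ge n\,t^{n-1}\,\omega_X^{n-1}\wedge\pi^*\omega_Y$ the $t$-powers cancel, and $\int_Xf_t^p\,\nu_t$ is bounded by $C\int_X\bigl(\tfrac{\mu}{\omega_X^{n-1}\wedge\pi^*\omega_Y}\bigr)^{p}\,\omega_X^{n-1}\wedge\pi^*\omega_Y$, which by Fubini over the compact base (a submersion off the discriminant $S$) is comparable to $\int_Y\bigl(\int_{X_y}|\tfrac{\Omega_y\wedge\overline{\Omega}_y}{\omega_X^{n-1}}|^{p}\,\omega_X^{n-1}\bigr)\,\omega_Y<\infty$ by Proposition \ref{atworstcanonical}(1),(3). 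This is where the at worst canonical assumption enters.

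\textbf{Step 3 (uniform $L^\infty$ bound; the main obstacle).} Granting Steps 1--2, the estimate $\norm{\phi}_{L^\infty}\le C$ is the content of the a priori bounds of \cite{EGZ2}\cite{De-Pa}: Ko{\l}odziej's iteration on the sublevel sets $\{\phi<-s\}$, fed by the comparison principle, the uniform domination of $\mu$ by the fixed capacity $\mathrm{Cap}_{\omega_1}$ together with the uniform $L^p$-density bound from Step 2(iii), and the uniform lower bound $a_t\ge n$ on the total mass. The delicate point, and the main obstacle, is the uniformity as the class $[\omega_t]=t[\omega_X]+\pi^*[\omega_Y]$ degenerates to $\pi^*[\omega_Y]$, which is nef but \emph{not} big; a verbatim application of Ko{\l}odziej's theorem to each $\omega_t$ yields a constant blowing up as $t\to0$, because comparing $\mathrm{Cap}_{\omega_t}$ with $\mathrm{Cap}_{\omega_1}$ costs powers of $t^{-1}$ (equivalently, the unit-volume rescaling of $\omega_t$ has diameter tending to infinity). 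The resolution is to exploit the fibration: the collapse of $\int_X\omega_t^n$ happens at exactly the ``vertical'' rate $t^{n-1}$, so that after removing this factor one is left with a non-degenerate Monge--Amp\`ere equation on the base Riemann surface $Y$ with the tame right-hand side $\pi_*\mu$ (whose density is controlled by $A_y$, Proposition \ref{atworstcanonical}(1)), and Ko{\l}odziej's estimate is applied there, where the reference class is fixed. Alternatively one can replace the pluripotential iteration by the PDE method via auxiliary Monge--Amp\`ere equations, which makes the dependence of the constant on $n$, $p$, $\norm{f_t}_{L^p(\nu_t)}$ and $a_t^{-1}$ transparent and hence manifestly $t$-independent. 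I expect this uniform-in-$t$ step to be the only real difficulty; Steps 1--2 are routine.
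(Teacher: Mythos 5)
The paper offers no argument for this proposition at all: it is quoted directly from \cite{EGZ2}\cite{De-Pa}, and those references are the proof. Your Steps 1--2 are harmless setup, and your Step 3 correctly identifies that everything hinges on uniformity in $t$; but it is exactly there that your account goes wrong, in two ways. First, the claim that ``a verbatim application of Ko\l odziej/EGZ yields a constant blowing up as $t\to 0$'' is not accurate for the results actually cited: the theorems of \cite{EGZ2} and \cite{De-Pa} are formulated precisely for families of semipositive forms $\omega_t$ dominated by a fixed K\"ahler form, with constants depending on the right-hand density only through its $L^p$ norm \emph{after normalising by $\int_X\omega_t^n$} against a \emph{fixed} volume form. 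In the present situation $\tilde\omega_t^n=a_tt^{n-1}\,i^{n^2}\Omega\wedge\overline\Omega$ with $a_tt^{n-1}=\int_X\omega_t^n$, so the normalised density with respect to $\omega_X^n$ is the fixed smooth function $i^{n^2}\Omega\wedge\overline\Omega/\omega_X^n$; the cited theorems therefore apply verbatim and uniformly, with no extra input. In particular your assertion that ``this is where the at worst canonical assumption enters'' is misleading: your need for Proposition \ref{atworstcanonical}(3) is an artefact of normalising against the $t$-dependent measure $\nu_t=V_t^{-1}\omega_t^n$ (whose density degenerates at the $\pi$-critical points); the $L^\infty$ bound holds for arbitrary singular fibres, and the canonical-singularity hypothesis is used elsewhere in the paper (fibrewise Yau estimates, diameter/volume bounds), not here.

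Second, the substitute arguments you offer for the uniformity are not proofs. The proposed ``reduction to a non-degenerate Monge--Amp\`ere equation on $Y$ with right-hand side $\pi_*\mu$'' is unjustified: the unknown $\phi$ lives on $X$, and the only mechanism by which one could trade it for a function on the base is control of its fibrewise oscillation --- but in the paper that control (the bound $\mathrm{osc}_{X_y}\phi\leq Ct$) is derived \emph{from} the $L^\infty$ bound via fibrewise Yau estimates, so this route is circular as a proof of Proposition \ref{EGZbound}. Likewise the sentence about ``the PDE method via auxiliary Monge--Amp\`ere equations'' is a pointer, not an argument, and its uniformity as the limiting class $\pi^*[\omega_Y]$ loses bigness is precisely what would have to be checked. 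So while your intended endpoint (appeal to \cite{EGZ2}\cite{De-Pa}) coincides with the paper's, the step you flag as ``the only real difficulty'' is left unproved, and the specific fixes you sketch would not close it; the correct observation is that, with the right normalisation, no fix is needed.
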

By a maximum principle argument based on the Chern-Lu formula,
\begin{prop}\label{omega0isbounded} 
	There is a uniform bound $\Tr_{\tilde{\omega}_t}\pi^*\omega_Y \leq C$.
\end{prop}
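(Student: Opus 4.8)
The proof is a maximum principle argument built on the Chern--Lu (Schwarz-type) inequality, in the by-now standard style of \cite{TosattiWeinkoveYang}. Write $u:=\Tr_{\tilde{\omega}_t}\pi^*\omega_Y\geq 0$, a smooth function on $X$ which is strictly positive precisely off the critical locus of $\pi$. Apply the Chern--Lu inequality to the holomorphic map $\pi:(X,\tilde{\omega}_t)\to(Y,\omega_Y)$. Two features make this favourable: since $\tilde{\omega}_t$ is Ricci-flat, the Ricci contribution to the inequality has the right sign (it drops out as a nonnegative term); and since $(Y,\omega_Y)$ is a \emph{fixed} compact Riemann surface, its holomorphic bisectional curvature is just the Gauss curvature of $\omega_Y$, bounded above by a constant $C_0=C_0(Y,\omega_Y)$ independent of $t$. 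Hence, on the open set $\{u>0\}$,
\[
\Delta_{\tilde{\omega}_t}\log u\ \geq\ -\,C_0\,u .
\]

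Next, combine this with the elementary identity coming from $\tilde{\omega}_t=\omega_t+\sqrt{-1}\partial\bar{\partial}\phi$:
\[
\Delta_{\tilde{\omega}_t}\phi=\Tr_{\tilde{\omega}_t}(\tilde{\omega}_t-\omega_t)=n-t\,\Tr_{\tilde{\omega}_t}\omega_X-u .
\]
Fix $A:=C_0+1$ and consider $H:=\log u-A\phi$ on $\{u>0\}$. Then
\[
\Delta_{\tilde{\omega}_t}H\ \geq\ -C_0 u-A\big(n-t\,\Tr_{\tilde{\omega}_t}\omega_X-u\big)\ =\ u+At\,\Tr_{\tilde{\omega}_t}\omega_X-An\ \geq\ u-An .
\]
By Proposition \ref{EGZbound} the potential $\phi$ is uniformly bounded, so $H\to-\infty$ as one approaches $\{u=0\}$; since $X$ is compact, $\sup_X H$ is therefore attained at an interior point $p$ with $u(p)>0$, where the Chern--Lu inequality is valid. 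At $p$ one has $\Delta_{\tilde{\omega}_t}H(p)\leq 0$, and the displayed inequality (using also $t\,\Tr_{\tilde{\omega}_t}\omega_X\geq 0$) forces $u(p)\leq An$. Consequently, for every $x\in X$,
\[
\log u(x)=H(x)+A\phi(x)\ \leq\ H(p)+A\norm{\phi}_{L^\infty}\ =\ \log u(p)-A\phi(p)+A\norm{\phi}_{L^\infty}\ \leq\ \log(An)+2A\norm{\phi}_{L^\infty},
\]
which yields $\Tr_{\tilde{\omega}_t}\pi^*\omega_Y\leq C$ with $C$ independent of $t$, as claimed.

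This argument is essentially routine, so there is no serious obstacle; the two points that genuinely require care are, first, that $u$ degenerates along the critical locus of $\pi$ (including over the singular fibres), which is handled precisely by the observation that $H$ tends to $-\infty$ there so that its maximum sits inside $\{u>0\}$; and second, that the constant $C_0$ is \emph{genuinely} independent of $t$ — this is where one uses that $(Y,\omega_Y)$ is fixed once and for all, while the collapsing metric $\tilde{\omega}_t$ enters the Chern--Lu error term only through the factor $u$ that is absorbed by the maximum principle. The Ricci-flatness of $\tilde{\omega}_t$ is exactly what removes the would-be Ricci term of the wrong sign; without it one would need a separate lower Ricci bound, which is not available uniformly in $t$.
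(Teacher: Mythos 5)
Your argument is correct and is precisely the route the paper indicates: a maximum principle applied to $\log\Tr_{\tilde{\omega}_t}\pi^*\omega_Y - A\phi$, using the Chern--Lu inequality (with Ricci-flatness of $\tilde{\omega}_t$ and the fixed curvature bound of $(Y,\omega_Y)$) together with the uniform potential bound of Proposition \ref{EGZbound}. The handling of the degeneracy along the $\pi$-critical locus, by noting that the test function tends to $-\infty$ there so the maximum lies in $\{u>0\}$, is the standard and correct way to close the argument.
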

Consequently, the fibrewise restriction $\tilde{\omega}_t|_{X_y}$ has the pointwise volume density upper bound
\begin{equation}\label{fibrevolumeupperbound}
\frac{\tilde{\omega}_t^{n-1}|_{X_y} }{ \omega_{SRF,y}^{n-1} } =\frac{\tilde{\omega}_t^{n-1}\wedge \omega_Y }{ \omega_{SRF,y}^{n-1}\wedge \omega_Y}
\leq C \frac{\tilde{\omega}_t^n(\Tr_{\tilde{\omega}_t}\pi^*\omega_Y) }{  \Omega_y\wedge \overline{\Omega_y}\wedge \omega_Y} \leq Ct^{n-1}.
\end{equation}
Define the oscillation to be $\text{osc}=\sup-\inf$. By applying Yau's $C^0$-estimate fibrewise, with $\omega_{SFR,y}$ as the background metric (which has uniformly bounded Sobolev and Poincar\'e constants in the `at worst canonical singular fibre' context),
\begin{lem} 
	The fibrewise oscillation satisfies the uniform bound
$
\text{osc}_{X_y} \phi \leq Ct.
$
\end{lem}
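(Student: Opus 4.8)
The plan is to read the fibrewise restriction of $\tilde\omega_t$ as the solution of a complex Monge--Amp\`ere equation on $X_y$ with uniformly $L^p$-bounded density, and to extract the oscillation bound from Yau's $C^0$-estimate run with $\omega_{SRF,y}$ as background metric; the factor $t$ then comes out by pure scaling. First I would restrict everything to $X_y$: since $\pi|_{X_y}$ is constant, the form $\pi^*\omega_Y$ restricts to zero on $X_y$, so
\[
t^{-1}\tilde\omega_t|_{X_y}=\omega_X|_{X_y}+\sqrt{-1}\partial\bar\partial v_y,\qquad v_y:=t^{-1}\phi|_{X_y},
\]
where $v_y$ is a bounded (by Proposition \ref{EGZbound}), $\omega_X|_{X_y}$-plurisubharmonic function, and $\text{osc}_{X_y}\phi=t\,\text{osc}_{X_y}v_y$; hence it suffices to bound $\text{osc}_{X_y}v_y$ uniformly in $t$ and in $y$ ranging over a neighbourhood of a fixed singular value. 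Taking $(n-1)$-st exterior powers,
\[
(\omega_X|_{X_y}+\sqrt{-1}\partial\bar\partial v_y)^{n-1}=t^{-(n-1)}\tilde\omega_t^{n-1}|_{X_y},
\]
a positive measure whose total mass is $\int_{X_y}[\omega_X]^{n-1}=1$ (using $\pi^*[\omega_Y]|_{X_y}=0$), and this coincides with $\int_{X_y}\omega_{SRF,y}^{n-1}$.

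Next I would check that the right-hand side is uniformly $L^p$. By the pointwise bound (\ref{fibrevolumeupperbound}),
\[
t^{-(n-1)}\tilde\omega_t^{n-1}|_{X_y}\leq C\,\omega_{SRF,y}^{n-1}=C\,A_y^{-1}\,i^{(n-1)^2}\Omega_y\wedge\overline{\Omega}_y ,
\]
and since $A_y$ is continuous and strictly positive, hence bounded below near a given singular value (Proposition \ref{atworstcanonical}(1)), while $\norm{ i^{(n-1)^2}\Omega_y\wedge\overline{\Omega}_y/\omega_X^{n-1}}_{L^p(\omega_X^{n-1}|_{X_y})}\leq C$ (Proposition \ref{atworstcanonical}(3)), the density of this measure relative to $\omega_X^{n-1}|_{X_y}$ — equivalently, relative to $\omega_{SRF,y}^{n-1}$ — is bounded in $L^p$ uniformly in $t$ and $y$.

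With this in hand I would invoke Yau's (equivalently Ko\l odziej's) $C^0$-estimate with $\omega_{SRF,y}$ as the background metric. The one delicate point, and the only place the `at worst canonical singular fibre' hypothesis is really used, is uniformity as $y$ approaches the singular value: the naive background $\omega_X|_{X_y}$ degenerates there, whereas $\omega_{SRF,y}$ — or, concretely, its pull-back to a fixed log resolution $f:\widehat{X}_0\to X_0$, onto which $f^*\Omega_0$ extends holomorphically by canonicity, and where one actually runs the pluripotential estimates — has uniformly bounded Sobolev and Poincar\'e constants and stays volume non-collapsed (Proposition \ref{atworstcanonical}(2)), so the constants produced by the Moser iteration survive the degeneration. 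This yields a uniform bound on the oscillation of the $\omega_{SRF,y}$-potential of $t^{-1}\tilde\omega_t|_{X_y}$; since that potential differs from $v_y$ only by the potential relating the two cohomologous K\"ahler forms $\omega_X|_{X_y}$ and $\omega_{SRF,y}$, and the latter is itself uniformly bounded (being part of the EGZ construction of $\omega_{SRF,y}$, with the same uniform input), one obtains $\text{osc}_{X_y}v_y\leq C$ and hence $\text{osc}_{X_y}\phi\leq Ct$.

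The hard part is precisely this uniformity of the pluripotential estimates through the singular fibre; but it is essentially already supplied by Proposition \ref{atworstcanonical} and the techniques of \cite{Zhang}\cite{Takayama}\cite{EGZ}, so the lemma is a quantitative, fibrewise repackaging of known estimates rather than anything genuinely new — the restriction computation, the scaling $\phi|_{X_y}=t\,v_y$, and the total-mass bookkeeping all being routine.
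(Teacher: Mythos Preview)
Your argument is correct and follows exactly the approach sketched in the paper: rescale by $t^{-1}$, use the fibrewise volume upper bound (\ref{fibrevolumeupperbound}) to control the Monge--Amp\`ere density, and run Yau's $C^0$-estimate with $\omega_{SRF,y}$ as background, the uniformity in $y$ coming from Proposition~\ref{atworstcanonical}. One minor simplification: since (\ref{fibrevolumeupperbound}) already gives a pointwise (hence $L^\infty$) bound for the density relative to $\omega_{SRF,y}^{n-1}$, the detour through the $L^p$ bound of Proposition~\ref{atworstcanonical}(3) is not strictly needed here.
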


Next one introduces the fibrewise average function of $\phi$:
\begin{equation*}
\underline{\phi}=\int_{X_y} \phi \omega_X^2.
\end{equation*}
A computation based on the Chern-Lu inequality gives
\[
\Lap_{\tilde{\omega}_t} (\log \Tr_{\tilde{\omega}_t }\omega_X -\frac{C}{t }(\phi-\underline{\phi}) ) \geq  \Tr_{\tilde{\omega}_t } \omega_X-\frac{\text{Const}}{t}.
\]
Now the fibrewise oscillation bound gives $\frac{1}{t}|\phi-\bar{\phi}|\leq C$, whence a maximum principle argument gives
\begin{thm}\label{lowerboundonmetrictheorem}There is a uniform pointwise lower bound
$
	\tilde{\omega}_t \geq C \omega_t.
$
\end{thm}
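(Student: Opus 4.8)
The plan is to feed the displayed Chern--Lu differential inequality into the maximum principle to obtain an upper bound on $t\,\Tr_{\tilde{\omega}_t}\omega_X$, and then convert this trace bound into the asserted metric lower bound by pointwise linear algebra.

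\emph{Step 1 (maximum principle).} Let $u=\log\Tr_{\tilde{\omega}_t}\omega_X-\frac{C}{t}(\phi-\underline{\phi})$ be the quantity appearing above, with $C$ the fixed constant for which $\Lap_{\tilde{\omega}_t}u\geq \Tr_{\tilde{\omega}_t}\omega_X-\text{Const}/t$ holds. The total space $X$ is compact and $\tilde{\omega}_t$ is an honest smooth Kähler metric on it (only the fibres, not $X$, are allowed to be singular), so $u$ attains its maximum at some $p\in X$, where $\Lap_{\tilde{\omega}_t}u(p)\leq 0$ and hence $\Tr_{\tilde{\omega}_t}\omega_X(p)\leq \text{Const}/t$. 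The fibrewise oscillation bound $\text{osc}_{X_y}\phi\leq Ct$ gives $|\phi-\underline{\phi}|\leq Ct$ pointwise, hence $\frac{C}{t}|\phi-\underline{\phi}|\leq C$ uniformly in $t$; therefore $u(p)\leq\log(\text{Const}/t)+C$, and for every $x\in X$
\[
\log\Tr_{\tilde{\omega}_t}\omega_X(x)=u(x)+\tfrac{C}{t}(\phi-\underline{\phi})(x)\leq u(p)+C\leq\log(\text{Const}'/t).
\]
Exponentiating, $t\,\Tr_{\tilde{\omega}_t}\omega_X\leq\text{Const}'$ on all of $X$, with the constant independent of $t$ and of the fibre.

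\emph{Step 2 (from trace bound to metric bound).} By Proposition \ref{omega0isbounded} and $\omega_t=t\omega_X+\pi^*\omega_Y$,
\[
\Tr_{\tilde{\omega}_t}\omega_t=t\,\Tr_{\tilde{\omega}_t}\omega_X+\Tr_{\tilde{\omega}_t}\pi^*\omega_Y\leq C.
\]
Diagonalising $\omega_t$ with respect to $\tilde{\omega}_t$ at an arbitrary point, its eigenvalues are nonnegative and sum to $\Tr_{\tilde{\omega}_t}\omega_t\leq C$, so each is at most $C$; this is exactly $\omega_t\leq C\tilde{\omega}_t$, i.e. $\tilde{\omega}_t\geq C^{-1}\omega_t$.

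\emph{Main obstacle.} Granting the two ingredients already recorded --- the Chern--Lu inequality for $u$ and the fibrewise oscillation bound --- Steps 1 and 2 are routine. The substantive work is upstream: establishing the differential inequality with the \emph{averaged} potential $\phi-\underline{\phi}$ in place of $\phi$, which is precisely what renders the $t$-dependence on the right-hand side harmless after invoking the oscillation bound. This requires a lower bound for $\Lap_{\tilde{\omega}_t}\underline{\phi}=\Tr_{\tilde{\omega}_t}\pi^*(\sqrt{-1}\partial\bar{\partial}_Y\underline{\phi})$, i.e. control of the complex Hessian on $Y$ of the fibrewise average of $\phi$, obtained from the $L^\infty$ bound on $\phi$ together with fibre integration; and the oscillation bound itself rests on Yau's $C^0$ estimate applied fibrewise with $\omega_{SRF,y}$ as background, using the uniform Poincaré/Sobolev constants supplied by the at-worst-canonical hypothesis. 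Neither presents a difficulty near the singular fibres, since all these arguments take place on the smooth total space.
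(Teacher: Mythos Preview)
Your proposal is correct and follows precisely the route sketched in the paper: apply the maximum principle to the displayed Chern--Lu inequality together with the oscillation bound $\frac{1}{t}|\phi-\underline{\phi}|\leq C$ to get $t\,\Tr_{\tilde{\omega}_t}\omega_X\leq C$, then combine with Proposition~\ref{omega0isbounded} and pointwise diagonalisation to obtain $\tilde{\omega}_t\geq C\omega_t$. Your commentary on the upstream obstacle (controlling $\Lap_{\tilde{\omega}_t}\underline{\phi}$ so that one may replace $\phi$ by $\phi-\underline{\phi}$ in the Chern--Lu quantity) is also on target, though the paper simply records that inequality as a given input rather than deriving it here.
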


The severity of the singularity is measured by the function $H=\frac{\omega_X^{n-1} \wedge \omega_Y}{\omega_X^n}$, whose zero locus is precise the $\pi$-critical points on $X$. By pointwise simultaneous diagonalisation of $\tilde{\omega}_t$ and $\omega_t$,
\begin{cor}\label{upperboundonmetric}
	There is a uniform upper bound
	$
	\tilde{\omega}_t \leq \frac{C}{H}\omega_t.
	$
\end{cor}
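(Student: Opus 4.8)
The plan is to argue pointwise, by simultaneously diagonalising $\tilde{\omega}_t$ and $\omega_t$ at each point. Fix a point $p$ that is not a $\pi$-critical point, so $H(p)>0$; at $\pi$-critical points the asserted inequality is vacuous since there $H=0$. Choose complex coordinates at $p$ in which $\omega_t$ is the standard Euclidean form and $\tilde{\omega}_t=\sum_i \lambda_i\, dz^i\wedge d\bar{z}^i$, with $0<\lambda_1\leq\cdots\leq\lambda_n$ the eigenvalues of $\tilde{\omega}_t$ relative to $\omega_t$. The bound $\tilde{\omega}_t\leq \frac{C}{H}\omega_t$ at $p$ is then equivalent to $\lambda_n\leq \frac{C}{H(p)}$.

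Two inputs enter. First, Theorem \ref{lowerboundonmetrictheorem} gives $\lambda_i\geq c_0$ for all $i$, with $c_0>0$ uniform. Second, I compute the product of the eigenvalues from the Calabi-Yau equation. Since $Y$ is a Riemann surface, $(\pi^*\omega_Y)^2=0$, hence $\omega_t^n=t^{n-1}\,\omega_X^{n-1}\wedge(t\omega_X+n\pi^*\omega_Y)=t^{n-1}(t+nH)\,\omega_X^n$; combining with (\ref{CalabiYaucondition}),
\[
\prod_{i=1}^n \lambda_i=\frac{\tilde{\omega}_t^n}{\omega_t^n}=\frac{a_t\, i^{n^2}\Omega\wedge\overline{\Omega}}{(t+nH)\,\omega_X^n}.
\]
As $i^{n^2}\Omega\wedge\overline{\Omega}$ and $\omega_X^n$ are fixed smooth, nowhere vanishing volume forms on the compact manifold $X$, their ratio is bounded above uniformly, and $a_t$ is bounded; therefore $\prod_i\lambda_i\leq \frac{C}{t+nH}\leq \frac{C}{nH(p)}$.

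Finally, isolating the top eigenvalue and using the lower bound on the remaining ones,
\[
\lambda_n=\frac{\prod_{i}\lambda_i}{\prod_{i<n}\lambda_i}\leq \frac{1}{c_0^{\,n-1}}\prod_i\lambda_i\leq \frac{C}{c_0^{\,n-1}\,n\,H(p)},
\]
which is the claim at $p$ with a new uniform constant, and hence everywhere on $X$. There is no real obstacle here beyond bookkeeping; the only point to keep in mind is that the upper bound degenerates along the $\pi$-critical locus, which is harmless because $H$ vanishes there, and if one wants a quantitative statement up to the critical locus one simply appeals to the continuity of the quantities involved.
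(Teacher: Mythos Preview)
Your proof is correct and follows exactly the approach the paper indicates: the paper simply says ``by pointwise simultaneous diagonalisation of $\tilde{\omega}_t$ and $\omega_t$'' and leaves the details implicit, while you have spelled them out---the lower bound on each eigenvalue from Theorem \ref{lowerboundonmetrictheorem}, together with the upper bound $\prod_i\lambda_i=\tilde{\omega}_t^n/\omega_t^n\leq C/H$ coming from (\ref{CalabiYaucondition}) and the explicit expansion $\omega_t^n=t^{n-1}(t+nH)\,\omega_X^n$.
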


In particular, in the subset $\{ H\gtrsim 1  \}\subset X$, namely the region away from the $\pi$-critical points but not necessarily away from the singular fibres, there is a uniform equivalence
\begin{equation}\label{uniformequivalence}
C^{-1}\omega_t \leq \tilde{\omega}_t \leq C\omega_t.
\end{equation}
Around any given point in $ \{ H\gtrsim 1  \}$, Evans-Krylov theory gives that $t^{-1}\tilde{\omega}_t$ has uniform $C^\infty$ bound with respect to the background metric $t^{-1}\omega_t$.

\begin{cor}\label{smoothboundfibrewise}
	Inside $\{H\gtrsim 1\}$,
	\[
	\norm{\nabla_{\omega_X}^{(k)} \frac{1}{t}\tilde{\omega}_t|_{X_y} }_{L^\infty}\leq C(k), \quad  \norm{\nabla_{\omega_X}^{(k)}  (\Tr_ {\tilde{\omega}_t } \omega_Y)  |_{X_y} }_{L^\infty}\leq C(k).
	\]
\end{cor}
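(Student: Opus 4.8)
The plan is to deduce both fibrewise estimates from the ambient regularity of $t^{-1}\tilde\omega_t$ that is already in hand, by passing to holomorphic coordinates adapted to $\pi$ and anisotropically rescaled in the base direction. Fix $c_0>0$ with $\{H\gtrsim 1\}=\{H\geq c_0\}$, and cover a neighbourhood of this compact region of $X$ by finitely many holomorphic charts $(z_1,\dots,z_{n-1},w)$ in which $w=\pi^*(\text{a coordinate on }Y)$, so the fibres of $\pi$ are the slices $\{w=\text{const}\}$; such charts of a definite size exist because $H\geq c_0/2$ forces $\pi$ to be a submersion there, and in particular the part of the regular locus of a singular fibre meeting $\{H\gtrsim 1\}$ is covered in the same way. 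In each chart substitute $w=t^{1/2}\zeta$. A direct computation shows that in the frame $(z,\zeta)$ the rescaled metric $t^{-1}\omega_t=\omega_X+t^{-1}\pi^*\omega_Y$ has fibre--fibre block $(\omega_X)_{z_j\bar z_k}$, component $(\pi^*\omega_Y)_{w\bar w}+t\,(\omega_X)_{w\bar w}$ in the $d\zeta\,d\bar\zeta$ direction, and mixed $dz^j\,d\bar\zeta$ entries of size $O(\sqrt t)$; hence, for $t$ small, $t^{-1}\omega_t$ and all its $(z,\zeta)$-derivatives are bounded and $t^{-1}\omega_t$ is uniformly elliptic, with constants independent of $t$, the chart and the fibre. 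Equivalently, $t^{-1}\omega_t$ has uniformly bounded geometry and the $(z,\zeta)$-charts are uniformly good coordinates for it.

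Since, inside $\{H\gtrsim 1\}$, the excerpt supplies both the uniform equivalence $C^{-1}\omega_t\leq\tilde\omega_t\leq C\omega_t$ and the Evans--Krylov bound that $t^{-1}\tilde\omega_t$ is $C^\infty$-bounded with respect to $t^{-1}\omega_t$, the previous step converts this into: in each $(z,\zeta)$-chart the coefficients of $t^{-1}\tilde\omega_t$ are uniformly $C^k$-bounded for every $k$, and $t^{-1}\tilde\omega_t$ is uniformly elliptic, with all constants independent of $t$, the chart and the fibre. Restrict now to a fibre $X_y$, which in a chart is a slice $\{\zeta=\zeta_0\}$ carrying coordinates $z$. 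Because $\pi^*\omega_Y$ vanishes on the fibres, the metric that $t^{-1}\omega_t$ induces on $X_y$ is exactly $\omega_X|_{X_y}$; thus $\omega_X|_{X_y}$ is uniformly elliptic with uniformly $C^k$-bounded coefficients in $z$, and $t^{-1}\tilde\omega_t|_{X_y}$ has coefficients $(t^{-1}\tilde\omega_t)_{z_j\bar z_k}(z,\zeta_0)$ inheriting the same uniform bounds. The fibrewise covariant derivative $\nabla_{\omega_X}^{(k)}\!\big(t^{-1}\tilde\omega_t|_{X_y}\big)$ is then, in the coordinates $z$, a universal polynomial expression in the $z$-derivatives up to order $k$ of the coefficients of $t^{-1}\tilde\omega_t|_{X_y}$, in the Christoffel symbols of $\omega_X|_{X_y}$ and their $z$-derivatives up to order $k-1$, and in the inverse of $\omega_X|_{X_y}$; all of these are uniformly bounded, which gives $\norm{\nabla_{\omega_X}^{(k)}\tfrac1t\tilde\omega_t|_{X_y}}_{L^\infty}\leq C(k)$.

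For the trace term, in a $(z,\zeta)$-chart one has $\Tr_{\tilde\omega_t}\pi^*\omega_Y=(\tilde\omega_t)^{\zeta\bar\zeta}(\pi^*\omega_Y)_{\zeta\bar\zeta}$, and since $(\tilde\omega_t)^{\zeta\bar\zeta}=t^{-1}(t^{-1}\tilde\omega_t)^{\zeta\bar\zeta}$ while $(\pi^*\omega_Y)_{\zeta\bar\zeta}=t\,(\pi^*\omega_Y)_{w\bar w}$, this equals $(\pi^*\omega_Y)_{w\bar w}\,(t^{-1}\tilde\omega_t)^{\zeta\bar\zeta}$: a product of a uniformly $C^k$-bounded function with a component of the inverse of the uniformly elliptic, uniformly $C^k$-bounded matrix $t^{-1}\tilde\omega_t$, hence uniformly $C^k$-bounded in $(z,\zeta)$ for every $k$. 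Restricting to $\{\zeta=\zeta_0\}$ and taking $\omega_X$-covariant derivatives along the fibre exactly as before yields the bound on $\norm{\nabla_{\omega_X}^{(k)}(\Tr_{\tilde\omega_t}\omega_Y)|_{X_y}}_{L^\infty}$.

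The argument is essentially bookkeeping, and the only point requiring care is the uniformity of the coordinate charts --- in $t$, in the fibre, and in the point of $\{H\gtrsim 1\}$ --- which is exactly what the anisotropic rescaling $w\mapsto t^{-1/2}w$ together with the compactness of $X$ secures, and which is what identifies the ambient $C^\infty$-control with respect to $t^{-1}\omega_t$ with $C^\infty$-control in the fixed $(z,\zeta)$-coordinates.
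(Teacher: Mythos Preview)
Your argument is correct and is essentially the same approach as the paper's, made explicit. The paper does not write out a proof of this corollary; it simply records it immediately after the sentence ``Around any given point in $\{H\gtrsim 1\}$, Evans--Krylov theory gives that $t^{-1}\tilde\omega_t$ has uniform $C^\infty$ bound with respect to the background metric $t^{-1}\omega_t$,'' leaving the passage to fibrewise $\omega_X$-derivatives implicit. Your anisotropic rescaling $w=t^{1/2}\zeta$ is precisely the device that turns ``$C^\infty$ bounded with respect to $t^{-1}\omega_t$'' into a concrete coordinate statement, and the observation that $t^{-1}\omega_t|_{X_y}=\omega_X|_{X_y}$ is exactly what lets one read off the fibrewise bounds; both are the intended (and standard) content behind the paper's one-line justification.
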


\begin{rmk}
It should be emphasized that near the $\pi$-critical points, the metrics $\omega_t$ and $\tilde{\omega}_t$ are far from uniformly equivalent. Furthermore, the pointwise estimate from Cor. \ref{upperboundonmetric} cannot imply the uniform fibre diameter bound (\ref{uniformdiameterbound}), nor do the fibres have any useful lower bound on the Ricci curvature to imply (\ref{uniformdiameterbound}).
Resolving this difficulty is the main concern of the present paper.
\end{rmk}

\subsection{
Local noncollapsing}

From now on we assume (\ref{uniformdiameterbound}) in the exposition.

\begin{prop}
	Assuming   (\ref{uniformdiameterbound}), then $t^{-1}\tilde{\omega}_t$ satisfies the \textbf{local volume non-collapsing} estimate: around any central point $P$, and for any $1 \lesssim R\lesssim t^{-1/2}$,
	\begin{equation}\label{localvolumenoncollapsing1}
	Vol(B_{ t^{-1} \tilde{\omega}_t}(P, R)) \geq CR^{2}.
	\end{equation}
	Morever $Vol(B_{ t^{-1} \tilde{\omega}_t}(P, R)) \geq CR^{2n}$ for any $R\lesssim 1$.
\end{prop}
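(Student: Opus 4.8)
The plan is to reduce the estimate to a definite volume lower bound for tubes $\pi^{-1}(U)$ over domains $U$ in the base, and then to exploit that $t^{-1}\tilde{\omega}_t$ is Ricci-flat. First note that rescaling does not change the Ricci curvature, so $t^{-1}\tilde{\omega}_t$ is again Ricci-flat; in particular Bishop--Gromov monotonicity applies and $R\mapsto R^{-2n}\,\text{Vol}(B_{t^{-1}\tilde{\omega}_t}(P,R))$ is non-increasing. Hence the bound $\text{Vol}(B(P,R))\geq cR^{2n}$ for $R\lesssim 1$ follows from the case $R\sim 1$ of the first assertion, and it suffices to prove $\text{Vol}(B_{t^{-1}\tilde{\omega}_t}(P,R))\geq cR^{2}$ for $1\lesssim R\lesssim t^{-1/2}$, with $c$ independent of $P$, the fibre, and $t$.

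Next, for any Borel $U\subseteq Y$ the Calabi--Yau equation (\ref{CalabiYaucondition}) together with $\Omega=\Omega_y\wedge dy$ yields
\[
\text{Vol}_{t^{-1}\tilde{\omega}_t}\!\big(\pi^{-1}(U)\big)=a_t\,t^{-1}\!\int_U A_y\,\omega_Y .
\]
Around a fixed singular fibre $A_y$ is continuous and strictly positive (Proposition \ref{atworstcanonical}) and $a_t\to n$, so there is $c_0>0$ with $\text{Vol}_{t^{-1}\tilde{\omega}_t}(\pi^{-1}(U))\geq c_0\,t^{-1}\,\text{Area}_{\omega_Y}(U)$. On the other hand, if $C_0$ denotes the constant in (\ref{uniformdiameterbound}), which holds for every fibre including the singular ones, then any two points on a common fibre are at $t^{-1}\tilde{\omega}_t$-distance $\leq C_0$, whence for $R\geq 2C_0$ one has $\pi^{-1}\big(\pi(B_{t^{-1}\tilde{\omega}_t}(P,R/2))\big)\subseteq B_{t^{-1}\tilde{\omega}_t}(P,R)$. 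Combining the two observations,
\[
\text{Vol}_{t^{-1}\tilde{\omega}_t}\!\big(B_{t^{-1}\tilde{\omega}_t}(P,R)\big)\ \geq\ c_0\,t^{-1}\,\text{Area}_{\omega_Y}\!\big(\pi(B_{t^{-1}\tilde{\omega}_t}(P,R/2))\big),
\]
so the whole proposition reduces to the claim that the image $\pi(B_{t^{-1}\tilde{\omega}_t}(P,\rho))\subseteq Y$ has $\omega_Y$-area $\gtrsim\rho^2 t$ for $C_0\leq\rho\lesssim t^{-1/2}$; equivalently, that the metric ball spreads out a definite amount in the base.

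To establish that, I would first move from $P$ to a nearby point $P'$ that lies on a regular fibre and away from the $\pi$-critical locus, at bounded $t^{-1}\tilde{\omega}_t$-distance; this uses the uniform fibre diameter bound together with the Gromov--Hausdorff continuity of $\omega_{SRF,y}$ across singular fibres from Proposition \ref{atworstcanonical}. Around such a good point the uniform equivalence $C^{-1}\omega_t\leq\tilde{\omega}_t\leq C\omega_t$ of (\ref{uniformequivalence}) and the fibrewise smooth estimates of Corollary \ref{smoothboundfibrewise} let one construct local holomorphic sections $\sigma$ of $\pi$ that are uniformly Lipschitz from $(Y,t^{-1}\omega_Y)$ into $(X,t^{-1}\tilde{\omega}_t)$ --- the point being that in the bound $|d\sigma(v)|^2_{t^{-1}\tilde{\omega}_t}\lesssim|d\sigma(v)|^2_{\omega_X}+t^{-1}|v|^2_{\omega_Y}$ the $\omega_X$-term carries a factor $t$ relative to $|v|^2_{t^{-1}\omega_Y}$ and is therefore negligible. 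Pushing a $t^{-1}\omega_Y$-ball of radius $\sim\rho$ forward by such a section shows that $\pi(B_{t^{-1}\tilde{\omega}_t}(P,\rho))$ contains a $t^{-1}\omega_Y$-ball of radius $\sim\rho$, which has $\omega_Y$-area $\sim\rho^2 t$; the finitely many points of the discriminant locus $S$ are circumvented by pushing the base path slightly into $Y\setminus S$ at negligible length cost.

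The step I expect to be the \textbf{main obstacle} is exactly this last one: making all of these estimates uniform in $t$ and in the fibre \emph{near the discriminant locus}. There the set $\{H\gtrsim 1\}$ on which $\tilde{\omega}_t$ and $\omega_t$ are comparable shrinks, the good sections are a priori defined only on shrinking base balls, and one must quantify the $t^{-1}\tilde{\omega}_t$-distance between neighbouring fibres while the connecting paths pass through tubes around the singular fibres. This is where the `at worst canonical' hypothesis --- through the uniform fibre diameter and non-collapsing input of Proposition \ref{atworstcanonical} --- is genuinely needed; the remaining ingredients (the volume formula, Bishop--Gromov, and the fibrewise estimates away from the critical points) are routine.
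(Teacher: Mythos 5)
Your proposal follows essentially the same route as the paper's: the ball $B_{t^{-1}\tilde{\omega}_t}(P,CR)$ contains $\pi^{-1}(B_{\omega_Y}(\pi(P),Rt^{1/2}))$ by combining the uniform fibre diameter bound with horizontal travel through $\{H\gtrsim 1\}$, the volume is then read off from the Calabi--Yau measure $a_t t^{-1}\,i^{n^2}\Omega\wedge\overline{\Omega}$, and the small-$R$ case follows from Bishop--Gromov. Your two-step reduction (first $\pi^{-1}(\pi(B(P,R/2)))\subseteq B(P,R)$, then a base-area lower bound for $\pi(B(P,R/2))$) is a cosmetic reorganization of the same containment.

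The step you flag as the main obstacle is not actually one. The set $\{H\geq\epsilon_0\}$ is a fixed compact subset of $X$, independent of $t$, meeting every fibre including the singular ones, and on it (\ref{uniformequivalence}) holds with uniform constants; in particular nothing shrinks near the discriminant locus. One fixes a finite open cover of $Y$ and smooth local sections of $\pi$ over its members with values in $\{H\geq\epsilon_0\}$. Lifting a base path of $\omega_Y$-length $\lesssim Rt^{1/2}$ through these sections costs $O(R)$ in $t^{-1}\tilde{\omega}_t$ (the $\omega_X$-contribution being smaller by a factor $t^{1/2}$, as you note), while the fibre diameter bound pays $O(1)$ for each of the boundedly many transitions between charts and for the initial and terminal vertical jumps. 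Holomorphicity of the sections, and first moving $P$ to a regular fibre, are both unnecessary: the `at worst canonical' assumption enters only through the assumed bound (\ref{uniformdiameterbound}), not through the section-lifting step.
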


\begin{proof}
	Assume first that $R\gtrsim 1$. Any fibre contains a subregion $\{H\gtrsim 1  \}$ where $\tilde{\omega}_t$ is uniformly equivalent to $\omega_t$. Thus if $d_{\omega_Y}(y,y')\lesssim Rt^{1/2}/C$, then the $t^{-1}\tilde{\omega}_t$-distance between the two fibres $X_{y}$ and $X_{y'}$ is $O(R)$. Using the fibre diameter bound, we can reach any point on a nearby fibre within $O(R)$ distance, so the ball $B_{t^{-1}\tilde{\omega}_t}(P, CR)$ contains the preimage of $B_{\omega_Y}(\pi(P), Rt^{1/2})$. Since the volume form of $t^{-1}\tilde{\omega}_t$ is $a_t t^{-1} \sqrt{-1}\Omega\wedge \overline{\Omega}$, we obtain the estimate (\ref{localvolumenoncollapsing1}). The $R\lesssim 1$ case follows from Bishop-Gromov monotonicity using the Ricci flatness of $\tilde{\omega}_t$.
\end{proof}

Thus non-collapsing Cheeger-Colding theory applies, and in particular around any point on $X$, including $\pi$-critical points, one can take non-collapsing pointed Gromov-Hausdorff limits of $(X, \frac{1}{t}\tilde{\omega}_t, P)$, with all the standard consequences on its regularity.

\subsection{
Convergence estimates}

Let $t\ll 1$. We fix a central fibre $X_0$, which can be singular. The one-dimensional base condition will be crucially used. Consider a coordinate ball $\{  |y|\leq R \}\subset Y$. Let $\omega_{Y,0}= A_0 \sqrt{-1}dy\wedge d\bar{y}$ be a Euclidean metric on $\{ |y|\leq R \}$, where we recall $A_y=\int_{X_y} i^{(n-1)^2}\Omega_y\wedge \overline{\Omega}_y$.

Chern-Lu inequality gives the subharmonicity
\[
\Lap_{\tilde{\omega}_t} (\log \Tr_{\tilde{\omega}_t }\omega_{Y,0}  ) \geq  0.
\]
Using a slightly tricky argument based on the 3-circle inequality and the Harnack inquality (relying on the local non-collapsing), we deduce

\begin{prop}\label{concentrationestimate}
	Assuming (\ref{uniformdiameterbound}), then we have a \textbf{concentration estimate} for $\Tr_{\tilde{\omega}_t} \omega_{Y,0}$ uniform for all choices of $X_0$:
	\begin{equation}
	\max_{|y|\leq t^{1/2}} \Tr_{\tilde{\omega}_t}  \omega_{Y,0}
	\leq 1+  \frac{C}{|\log t|}  ,
	\end{equation}
	\begin{equation}
	t^{-n}\norm{    \Tr_{\tilde{\omega}_t}\omega_{Y,0} -1       } _{L^1_{\tilde{\omega}_t}(|y|\lesssim t^{1/2})}  \leq \frac{C}{|\log t|}.
	\end{equation}
\end{prop}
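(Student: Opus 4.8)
The plan is to run everything directly on $X$ with the function $u:=\log\Tr_{\tilde\omega_t}\omega_{Y,0}$. Recall that since $\omega_{Y,0}$ is flat and $\tilde\omega_t$ is Ricci-flat, the Chern--Lu inequality gives $\Lap_{\tilde\omega_t}u\ge 0$ on $\pi^{-1}(\{|y|\le R\})$, and that $u$ admits a crude uniform upper bound: $\tilde\omega_t\ge c\,\omega_t\ge c\,\pi^*\omega_Y$ by Theorem \ref{lowerboundonmetrictheorem} forces $\Tr_{\tilde\omega_t}\pi^*\omega_Y\le C$ pointwise, and $\pi^*\omega_{Y,0}\le C\,\pi^*\omega_Y$ on the fixed coordinate ball, so $u\le\log C_0$. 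The first real step is to pin down the typical value of $\Tr_{\tilde\omega_t}\omega_{Y,0}$ near $X_0$: combining the Calabi--Yau equation $\tilde\omega_t^n=a_t t^{n-1}i^{n^2}\Omega\wedge\overline{\Omega}$, the pointwise identity $n\,\tilde\omega_t^{n-1}\wedge\pi^*\omega_{Y,0}=(\Tr_{\tilde\omega_t}\pi^*\omega_{Y,0})\,\tilde\omega_t^n$, and the topological identity $\int_{X_y}\tilde\omega_t^{n-1}=t^{n-1}$ (valid as $[\pi^*\omega_Y]|_{X_y}=0$), fibre integration over any disk $D_r\subset D_R$ yields
\[
\frac{\int_{\pi^{-1}(D_r)}\big(\Tr_{\tilde\omega_t}\omega_{Y,0}\big)\,\tilde\omega_t^n}{\int_{\pi^{-1}(D_r)}\tilde\omega_t^n}=\frac{n}{a_t}\cdot\frac{A_0\,|D_r|}{\int_{D_r}A_y\,\mu_0},\qquad \mu_0:=i\,dy\wedge d\bar y,\ \ |D_r|:=\int_{D_r}\mu_0.
\]
Since $a_t=n+[\omega_X]^n\,t>n$ and $y\mapsto A_y$ is continuous with $A_y\to A_0$ as $y\to 0$ (Proposition \ref{atworstcanonical}, with modulus uniform over the finitely many points of $S$), the right-hand side lies in $[1-\delta(r),1+\delta(r)]$ for a modulus $\delta(r)\to 0$; by Jensen the $\tilde\omega_t^n$-average of $u$ over $\pi^{-1}(D_r)$ is $\le\log(1+\delta(r))\le\delta(r)$.

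The second input is a fibred Hadamard three-circles inequality: $\pi^*\log|y|$ is pluriharmonic, hence $\tilde\omega_t$-harmonic, on $\pi^{-1}(D_R\setminus\{0\})$, so comparing $u$ with the pulled-back harmonic barriers $a+b\log|y|$ on annular regions $\pi^{-1}(\{\rho_1\le|y|\le\rho_2\})$ via the maximum principle shows that $M(r):=\sup_{\pi^{-1}(\{|y|\le r\})}u$ is non-decreasing and convex in $\log r$ on $(0,R]$. The third input is Harnack: by the local non-collapsing around $X_0$ implied by (\ref{uniformdiameterbound}), $(X,\tilde\omega_t)$ enjoys uniform volume doubling and a uniform Poincar\'e inequality on geodesic balls, so the De Giorgi--Nash--Moser weak Harnack inequality holds with a constant independent of $t$ and of $X_0$; applied to the non-negative $\tilde\omega_t$-superharmonic function $\log C_0-u$, and combined with the fact that for $t^{1/2}\lesssim r\lesssim R$ the set $\pi^{-1}(D_r)$ is sandwiched between concentric geodesic balls of radius $\asymp r$ (the inclusion $\pi^{-1}(D_r)\subseteq B(P,Cr)$ being exactly the point established in the local non-collapsing argument, using the fibre diameter bound), it gives $\sup_{\pi^{-1}(D_{cr})}u\le (1-c)\,\sup_{\pi^{-1}(D_{2r})}u+c\,\delta(Cr)$.

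Chaining this improvement across the dyadic scales $r=2^j t^{1/2}$, $j=0,\dots,N$ with $N\asymp|\log t|$, the crude bound $\log C_0$ collected at scale $\asymp R$ is damped by $(1-c)^N$ (polynomially small in $t$), while the accumulated errors are governed by $\delta$ evaluated at scales $\asymp t^{1/2}$ up to logarithmic factors; equivalently, the small solid average is fed back through the three-circles convexity, the logarithmic range of scales $\log(R/t^{1/2})\asymp|\log t|$ being what carries the rate. The outcome is $M(t^{1/2})\le C/|\log t|$, and exponentiating gives the first assertion $\max_{|y|\le t^{1/2}}\Tr_{\tilde\omega_t}\omega_{Y,0}\le 1+C/|\log t|$. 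For the $L^1$ assertion, the positive part of $|\Tr_{\tilde\omega_t}\omega_{Y,0}-1|$ is controlled by the sup bound just obtained, and its negative part is controlled because the first-step computation shows the $\tilde\omega_t^n$-average of $\Tr_{\tilde\omega_t}\omega_{Y,0}$ over $\pi^{-1}(D_{t^{1/2}})$ is $\ge 1-C/|\log t|$; since $t^{-n}\asymp \mathrm{Vol}_{\tilde\omega_t}(\pi^{-1}(D_{t^{1/2}}))^{-1}$, the weighted $L^1$ norm is comparable to the average of $|\Tr_{\tilde\omega_t}\omega_{Y,0}-1|$, whence the bound.

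I expect the main obstacle to be this endgame: making rigorous and, crucially, uniform (in $t$ and over all singular fibres) the marriage between two-dimensional potential theory on the base (the three-circles convexity and the $|\log t|$-range of scales) and the genuinely $2n$-dimensional Harnack inequality on $X$. Two points need care. First, the weak Harnack constant for $(X,\tilde\omega_t)$ must not degenerate as the fibres collapse; this is precisely where local non-collapsing from (\ref{uniformdiameterbound}) is indispensable, and it must be invoked through Cheeger--Colding-type doubling/Poincar\'e rather than any Ricci lower bound, which is unavailable near the $\pi$-critical points. Second, the comparison of $\pi^{-1}(D_r)$ with a genuine geodesic ball, and the bookkeeping of the errors $\delta(Cr)$ down to the critical scale $r\asymp t^{1/2}$, requires the fibre diameter bound itself together with a control of the modulus of continuity of $y\mapsto A_y$ at canonical singular fibres of order $1/|\log|y||$ (checked via a log resolution and the $L^p$ bound of Proposition \ref{atworstcanonical}); the coincidence of this modulus with the scale-range factor is what makes the stated rate $C/|\log t|$ come out.
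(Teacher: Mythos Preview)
Your proposal is correct and follows the same approach the paper indicates: the paper states only that the result comes from ``a slightly tricky argument based on the 3-circle inequality and the Harnack inequality (relying on the local non-collapsing)'', deferring details to \cite{Licollapsing}, and you have assembled precisely those ingredients---Chern--Lu subharmonicity of $\log\Tr_{\tilde\omega_t}\omega_{Y,0}$, the fibred three-circles convexity via the pulled-back harmonic function $\pi^*\log|y|$, the integral identity pinning the average of $\Tr$ near $1$, and Moser--Harnack supplied by the local non-collapsing from (\ref{uniformdiameterbound}). One minor remark: rather than iterating Harnack across all dyadic scales (which would damp the crude bound polynomially and push the entire $C/|\log t|$ rate onto your modulus $\delta$), the ``tricky'' combination is a \emph{single} three-circles step between scales $t^{1/2},\,Ct^{1/2},\,R$ (yielding $M(Ct^{1/2})-M(t^{1/2})\le C/|\log t|$ directly from the logarithmic range) followed by a \emph{single} Harnack at scale $\sim t^{1/2}$; this makes the stated rate appear more transparently, though your version works too once one notes (as you do) that $A_y$ is in fact H\"older at canonical fibres.
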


The concentration estimate easily entails that the two volume density on $X_0$, given by $(t^{-1}\tilde{\omega}_t)^{n-1}$ and $\omega_{SRF,y}^{n-1}$, are close in the $L^1$-sense. By considering the fibrewise Monge-Amp\`ere equation, one deduces that their relative K\"ahler potential is small in an integral sense. In the regular region $\{ H\gtrsim 1 \}$, this improves the smooth bounds in Cor. \ref{smoothboundfibrewise} to convergence bounds:

\begin{prop}
For any small $\epsilon>0$,
	\begin{equation}\label{fibrewisesmoothconvergence}
	\norm{\nabla^{(k)}_{\omega_X} (\omega_{SRF,y}-\frac{1}{t}   \tilde{\omega}_t|_{X_y} )  }_{L^\infty( {X_y}\cap \{H\gtrsim 1 \}) } \leq \frac{C(k, \epsilon)}{ |\log t|^{1/2-\epsilon}  }.
	\end{equation}
\end{prop}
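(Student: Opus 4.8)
The plan is to compare $\tfrac1t\tilde\omega_t|_{X_y}$ with $\omega_{SRF,y}$ through the fibrewise complex Monge--Amp\`ere equation: the concentration estimate gives $L^1$-closeness of the two fibrewise volume densities with rate $|\log t|^{-1}$; an energy estimate upgrades this to $L^2$-smallness of the relative K\"ahler potential with rate $|\log t|^{-1/2}$; and a local interpolation against the already-available uniform higher-derivative bounds on $\{H\gtrsim1\}$ promotes it to the stated $C^k$-bound, at the cost of an arbitrarily small loss in the exponent. In detail: write $\tfrac1t\tilde\omega_t|_{X_y}=\omega_{SRF,y}+\sqrt{-1}\partial\bar\partial\psi_y$ on $X_y$, so that $\psi_y=\tfrac1t\phi|_{X_y}-\rho_y$ up to an additive constant, where $\omega_{SRF,y}=\omega_X|_{X_y}+\sqrt{-1}\partial\bar\partial\rho_y$, normalised so that $\int_{X_y}\psi_y\,\omega_{SRF,y}^{n-1}=0$. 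Centring a coordinate $y$ at the fibre in question and taking the adapted Euclidean metric $\omega_{Y,0}=A_y\,i\,dy\wedge d\bar y$, the Calabi--Yau equation (\ref{CalabiYaucondition}), the factorisation $\Omega=\Omega_y\wedge dy$, and $\omega_{SRF,y}^{n-1}=A_y^{-1}i^{(n-1)^2}\Omega_y\wedge\overline{\Omega}_y$ give
\[
\big(\omega_{SRF,y}+\sqrt{-1}\partial\bar\partial\psi_y\big)^{n-1}=f_y\,\omega_{SRF,y}^{n-1},\qquad f_y=\frac{a_t}{n}\,\Tr_{\tilde\omega_t}\omega_{Y,0},
\]
with automatically $\int_{X_y}f_y\,\omega_{SRF,y}^{n-1}=\int_{X_y}[\omega_X]^{n-1}=1$. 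The concentration estimate of Prop.~\ref{concentrationestimate} applied to this re-centred picture gives $f_y\le1+C/|\log t|$ pointwise on $X_y$ (since $a_t/n=1+O(t)$), and together with $\int_{X_y}f_y\,\omega_{SRF,y}^{n-1}=1$ this forces $\norm{f_y-1}_{L^1(X_y,\,\omega_{SRF,y}^{n-1})}\le C/|\log t|$. Centring on the fibre of interest is what produces the clean prefactor $1+O(t)$, so that no modulus of continuity for $A_y$ is needed; alternatively the same bound follows from the second inequality of Prop.~\ref{concentrationestimate} after a Fubini argument in the base. Since every fibre near the singular one can serve as the centre, this holds for all relevant $y$.

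For the energy estimate: from $\text{osc}_{X_y}\phi\le Ct$ and the uniform bound on the fibrewise Calabi--Yau potentials $\rho_y$ in the `at worst canonical' setting, $\text{osc}_{X_y}\psi_y\le C$, hence $\norm{\psi_y}_{L^\infty(X_y)}\le C$ after the normalisation. Integration by parts turns $\int_{X_y}\psi_y\big[(\tfrac1t\tilde\omega_t|_{X_y})^{n-1}-\omega_{SRF,y}^{n-1}\big]$ into a sum of nonnegative Dirichlet-type integrals, the term paired with $\omega_{SRF,y}^{n-2}$ dominating $c\int_{X_y}|\partial\psi_y|^2_{\omega_{SRF,y}}\,\omega_{SRF,y}^{n-1}$; since $(\tfrac1t\tilde\omega_t|_{X_y})^{n-1}-\omega_{SRF,y}^{n-1}=(f_y-1)\,\omega_{SRF,y}^{n-1}$, this yields $\int_{X_y}|\partial\psi_y|^2_{\omega_{SRF,y}}\,\omega_{SRF,y}^{n-1}\le C\norm{\psi_y}_{L^\infty}\norm{f_y-1}_{L^1}\le C/|\log t|$. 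The uniform Poincar\'e inequality for $(X_y,\omega_{SRF,y})$ (available precisely because the fibres are at worst canonical) then gives
\[
\norm{\psi_y}_{L^2(X_y,\,\omega_{SRF,y}^{n-1})}\le \frac{C}{|\log t|^{1/2}},
\]
which is the source of the exponent $\tfrac12$.

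To upgrade to $C^k$ on $\{H\gtrsim1\}$: on a fixed slightly larger region of the same form, both $\tfrac1t\tilde\omega_t|_{X_y}$ (Cor.~\ref{smoothboundfibrewise}) and $\omega_{SRF,y}$ (standard interior Calabi--Yau estimates on the regular locus, uniform in $y$, since $\{H\gtrsim1\}$ keeps a uniform distance from $\{H=0\}\supseteq\mathrm{Sing}(X_y)$, using the uniform $L^p$ density bound of Prop.~\ref{atworstcanonical}(3)) are uniformly $C^N$-bounded with respect to $\omega_X$ for every $N$. Applying Schauder to $\sqrt{-1}\partial\bar\partial\psi_y=\tfrac1t\tilde\omega_t|_{X_y}-\omega_{SRF,y}$ together with $\norm{\psi_y}_{C^0}\le C$ then gives $\norm{\psi_y}_{C^N(\{H\gtrsim1\})}\le C(N)$ for every $N$. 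Interpolating between this and the $L^2$-bound, in the form $\norm{\psi_y}_{C^{k+2}}\le C\norm{\psi_y}_{L^2}^{\,1-\theta}\norm{\psi_y}_{C^N}^{\,\theta}$ with $1-\theta=1-\theta(k,N)\to1$ as $N\to\infty$, and choosing $N=N(k,\epsilon)$ large, gives $\norm{\psi_y}_{C^{k+2}(\{H\gtrsim1\})}\le C(k,\epsilon)|\log t|^{-1/2+\epsilon}$; since $\omega_{SRF,y}-\tfrac1t\tilde\omega_t|_{X_y}=-\sqrt{-1}\partial\bar\partial\psi_y$, this is (\ref{fibrewisesmoothconvergence}). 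The $\epsilon$-loss is an artefact of this last step (the number $N$ of derivatives blows up as $\epsilon\to0$); the genuine rate is $|\log t|^{-1/2}$.

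The hardest ingredient is the one invoked just above: uniform-in-$y$ higher-order interior estimates for the (a priori singular) Calabi--Yau metrics $\omega_{SRF,y}$ on $\{H\gtrsim1\}$ that stay valid \emph{up to} the singular fibre. This is exactly where the `at worst canonical' assumption enters essentially --- via the uniform $L^p$ bound on $\Omega_y\wedge\overline{\Omega}_y/\omega_X^{n-1}$ and the continuity (Gromov--Hausdorff, and $C^\infty_{\mathrm{loc}}$ on the regular locus) of $\omega_{SRF,y}$ from Prop.~\ref{atworstcanonical} --- guaranteeing that the fibrewise Calabi--Yau geometry on $\{H\gtrsim1\}$ does not degenerate even as the complex structure of $X_y$ acquires its canonical singularities. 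The remaining ingredients --- the integration by parts, the uniform Poincar\'e inequality, and the interpolation --- are soft once this is in hand.
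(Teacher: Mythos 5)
Your proof is correct and takes the same approach as the paper's (outline, with full details deferred to \cite{Licollapsing}): the concentration estimate gives $L^1$-closeness of the fibrewise volume densities, an integration-by-parts energy estimate plus the uniform Poincar\'e inequality gives $L^2$-smallness of the relative potential $\psi_y$ at rate $|\log t|^{-1/2}$, and interpolation against the uniform $C^N$-bounds on $\{H\gtrsim 1\}$ from Cor.~\ref{smoothboundfibrewise} upgrades this to the $C^k$-bound with an arbitrarily small loss in the exponent. The supporting computations you supply --- the fibrewise Monge--Amp\`ere identity $f_y=(a_t/n)\Tr_{\tilde\omega_t}\omega_{Y,0}$ with $\int_{X_y}f_y\,\omega_{SRF,y}^{n-1}=1$, the one-sided pointwise bound forcing the $L^1$ bound, and the Gagliardo--Nirenberg interpolation --- are all sound.
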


There is one extra bit of juice one can squeeze out of the Chern-Lu formula and the concentration estimate, using an integration by part argument. We have a  gradient bound, which shows that $d\pi$ is in some sense approximately parallel.
\begin{equation}\label{gradientestimateChernLu}
	\int_{ |y|\lesssim  t^{1/2}    }(\frac{|\nabla d\pi|^2 }{\Tr_{ \tilde{\omega}_t  } \omega_{Y,0}  }-|\partial \log \Tr_{ \tilde{\omega}_t  } \omega_{Y,0} |^2) \tilde{\omega}_t^n \leq \frac{C t^{n-1}} {|\log t|}.
	\end{equation}
All these estimates are indepedent of the choice of $X_0$.

\subsection{Gromov Hausdorff limit around the singular fibre}\label{GHlimitaroundsingularfibresection}

Fix a point $P$ on a (singular) fibre $X_0$, and look at the pointed sequence of Ricci flat spaces $Z_t=(\pi^{-1} B_{\omega_Y} (0, R)\subset X, t^{-1}\tilde{\omega}_t)$. Local noncollapsing implies that after passing to subsequence, there is some complex $n$-dimensional Gromov-Hausdorff limit space $(Z,\omega_\infty)$, with a Hausdorff codimension 4 regular locus $Z^\text{reg}$ which is connected, open, dense, where the limiting metric is smooth.  Morever $Z^\text{reg}$ has a natural limiting complex structure, such that the limiting metric is K\"ahler. We shall suppress below mentions of subsequence to avoid overloading notation, and tacitly understand a Gromov-Hausdorff metric is fixed on the disjoint union $Z_t \sqcup Z   $, which displays the GH convergence. Recall $t\ll 1$.

We wish to identify the complex structure. Some heuristic first: since everything away from the fibre $X_0$ is pushed to infinity by scaling, the limit as a complex variety should be the normal neighbourhood of $X_0$, which is just the trivial product $X_0\times \C$ in the case of a smooth fibre, and the guess is that the same is true for the singular fibre.

More formally, we build comparison maps. Let $u$ denote the standard coordinate on $\C$, and $\omega_\C$ refers to the standard Euclidean metric on $\C$. Define the holomorphic maps \[f_t: (Z_t, t^{-1}\tilde{\omega}_t  ) \to (X\times \C, \omega_X+\omega_{\C} ),
\quad x\mapsto (x, u=t^{-1/2}\pi(x)).
\] 
Our scaling convention is that $\omega_{\C}$ agrees with $t^{-1}\omega_{Y,0}$ under the identification $u=t^{-1/2}y=t^{-1/2}\pi(x)$.

By the uniform bound $\Tr_{\tilde{\omega}_t  } {\omega_t} \leq C $, there is a Lipschitz bound on $f_t$ independent of $t$, so the Gromov-Hausdorff limit inherits a Lipschitz map $f_\infty$ into $X\times \C$. By the interior regularity of holomorphic functions, the limiting map $f_\infty$ is holomorphic. As a rather formal consequence of the uniform fibre diameter bound, we can identify the image:

\begin{lem}
	The image of $f_\infty$ is $X_{0}\times \C$.
\end{lem}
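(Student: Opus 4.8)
The plan is to show the image of $f_\infty$ is contained in $X_0\times\C$ and contains $X_0\times\C$, as two separate inclusions.

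First I would show the image lands in $X_0\times\C$. Since $f_t(x)=(x,t^{-1/2}\pi(x))$, a point $z\in Z$ is a Gromov-Hausdorff limit of points $x_t\in Z_t$ with $d_{t^{-1}\tilde\omega_t}(P,x_t)$ bounded; I need the base coordinate $\pi(x_t)$ to satisfy $t^{-1/2}|\pi(x_t)|$ bounded (automatic, since $\Tr_{\tilde\omega_t}\pi^*\omega_Y\le C$ means the base coordinate $u=t^{-1/2}y$ is uniformly Lipschitz from $(Z_t,t^{-1}\tilde\omega_t)$, so bounded distance forces bounded $u$), and crucially that $x_t$ itself converges in $X$ to a point of $X_0$. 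The latter is where the fibre structure enters: the $t^{-1}\tilde\omega_t$-distance from $P$ to the fibre $X_{\pi(x_t)}$ is at least $C^{-1}t^{-1/2}d_{\omega_Y}(\pi(P),\pi(x_t))$ up to the fibre-crossing comparison (use $\tilde\omega_t\ge C\omega_t$ from Theorem~\ref{lowerboundonmetrictheorem} to bound below the length of any path by its horizontal projection), so bounded distance forces $d_{\omega_Y}(\pi(P),\pi(x_t))\lesssim t^{1/2}\to0$, hence any subsequential $X$-limit of $x_t$ lies in $X_0$. Thus $\mathrm{Im}(f_\infty)\subseteq \overline{X_0}\times\C=X_0\times\C$ (the fibre being closed in $X$).

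Next, the reverse inclusion: every point of $X_0\times\C$ is hit. Here I would use the uniform fibre diameter bound directly. Fix $(q,u_0)\in X_0\times\C$. Pick $y_t\in Y$ with $t^{-1/2}y_t=u_0$ and a point $q_t\in X_{y_t}$ converging to $q$ in $X$ (possible since $y_t\to0$ and fibres vary continuously; if $q$ is a singular point of $X_0$ approximate through the regular locus and take a further limit). By (\ref{uniformdiameterbound}), $q_t$ lies within uniformly bounded $t^{-1}\tilde\omega_t$-distance of $P$ (first cross from $X_{\pi(P)}$ to $X_{y_t}$ with cost $O(t^{-1/2}|y_t|)=O(|u_0|)$ using the region $\{H\gtrsim1\}$ where the metrics are equivalent, then use the fibre diameter bound to reach $q_t$), so $q_t$ determines a point $z$ in the Gromov-Hausdorff limit $Z$, and $f_\infty(z)=(q,u_0)$ by continuity of $f_t$ and the limit. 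Since $(q,u_0)$ was arbitrary, $X_0\times\C\subseteq\mathrm{Im}(f_\infty)$, and $\mathrm{Im}(f_\infty)$ is closed (being the continuous image of a complete space under a proper-on-bounded-sets map, or just by the Lipschitz bound), giving equality.

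The main obstacle I expect is the second inclusion at the \emph{singular} points of $X_0$: one must ensure the approximating points $q_t\in X_{y_t}$ can be chosen converging to a prescribed $q\in X_0^{\mathrm{sing}}$ while keeping the bounded-distance conclusion, which forces a diagonal argument through $X_0^{\mathrm{reg}}$ combined with the fibrewise diameter bound; the point is that (\ref{uniformdiameterbound}) is a statement about paths in $X$ not contained in $X_y$, so it is robust enough to absorb small perturbations of the target point, but this needs to be said carefully. A secondary subtlety is checking $f_\infty$ is genuinely well-defined (independent of the sequence $x_t$ representing $z$), which follows from the global Lipschitz bound on $f_t$ uniform in $t$.
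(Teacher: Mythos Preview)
Your two-inclusion argument is correct and is precisely the natural formalization of what the paper calls ``a rather formal consequence of the uniform fibre diameter bound''; the paper in fact omits the proof entirely, and your write-up supplies the intended details. One minor remark: your worry about the surjectivity at singular points of $X_0$ is not a genuine obstacle, since $\pi$ is flat (being a morphism from a smooth variety to a smooth curve), so the fibres $X_{y_t}$ converge to $X_0$ in the Hausdorff sense in $X$ and every $q\in X_0$ is a limit of points $q_t\in X_{y_t}$ with no need to pass through $X_0^{\mathrm{reg}}$; the fibre diameter bound (\ref{uniformdiameterbound}) then bounds $d_{t^{-1}\tilde\omega_t}(P,q_t)$ directly (apply it once on $X_0$ to reach $\{H\gtrsim 1\}$ from $P$, traverse horizontally at cost $O(|u_0|)$, and apply it again on $X_{y_t}$).
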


Recall the function $H$ measures the severity of singular effect. Now $H$ is a continuous function on $X_0$, so defines a function on $Z$ by pulling back via $f_\infty$. A qualitative consequence of the regularity in $\{ H\gtrsim 1 \}$ is

\begin{prop}
	The map $f_\infty$ is a biholomorphism  $\{H>0\} \subset Z\to X_{0}^{\text{reg} }\times \C$.
\end{prop}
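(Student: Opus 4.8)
The plan is to upgrade the Lipschitz holomorphic map $f_\infty$ into a biholomorphism over the locus $\{H>0\}$ by combining the smooth convergence estimates in the regular region with a standard open-closed argument. First I would restrict attention to the preimage $f_\infty^{-1}(X_0^{\mathrm{reg}}\times \C)\subset Z$. By the previous lemma the image of $f_\infty$ is $X_0\times\C$, and by the regularity estimates in $\{H\gtrsim 1\}$ (Corollary \ref{smoothboundfibrewise}, refined to the convergence bounds (\ref{fibrewisesmoothconvergence})) together with the concentration estimate (Proposition \ref{concentrationestimate}) and the gradient estimate (\ref{gradientestimateChernLu}), the rescaled metrics $t^{-1}\tilde\omega_t$ converge in $C^\infty_{loc}$ over the region $\{H\gtrsim 1\}$ to the product metric $\omega_{SRF,0}+\omega_\C$, with $d\pi$ becoming approximately parallel. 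Hence over any relatively compact piece of $\{H>0\}\subset Z$, the limiting metric $\omega_\infty$ is a smooth K\"ahler metric and $f_\infty$ is a smooth map whose differential, in the limit, is an isometry between $\omega_\infty$ and $\omega_{SRF,0}+\omega_\C$; in particular $df_\infty$ is invertible there, so $f_\infty$ is a local biholomorphism on $\{H>0\}$.

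Next I would establish injectivity. Surjectivity onto $X_0^{\mathrm{reg}}\times\C$ follows because the image is all of $X_0\times\C$ and $f_\infty$ is continuous, so I only need that distinct points of $\{H>0\}$ have distinct images. This is where I would use that $\{H>0\}\subset Z$ is an open subset of the Cheeger-Colding limit whose metric is smooth and, by the convergence above, isometric via $f_\infty$ to the product metric on its image: a local isometry onto a smooth Riemannian manifold that is also a proper map (which I would check using the local non-collapsing and the Lipschitz bound to control how sets escape to infinity, i.e. points going to infinity in $Z$ map to points with $|u|\to\infty$ or approaching the singular set of $X_0$) is a covering map, and since $X_0^{\mathrm{reg}}\times\C$ is connected with $\C$ simply connected the covering is determined by $\pi_1(X_0^{\mathrm{reg}})$; one then argues the cover is trivial because near the $\C$-directions $f_\infty$ is modeled on the identity in the base coordinate $u=t^{-1/2}y$, forcing the fibre of the covering to be a single point. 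Alternatively, and perhaps more cleanly, I would argue injectivity directly: if two points $p\neq q$ in $\{H>0\}$ had $f_\infty(p)=f_\infty(q)$, then pulling back local holomorphic coordinates on $X_0^{\mathrm{reg}}\times\C$ through $f_\infty$ and using that $Z^{\mathrm{reg}}$ is a normal complex space on which $f_\infty$ is finite (by properness) and generically one-to-one (the smooth fibre model is a genuine product), one sees the map has degree one, hence is injective.

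The main obstacle I anticipate is the properness/behaviour-at-infinity input: one must rule out that a sequence of points in $\{H>0\}\subset Z$ escaping every compact set nonetheless has $f_\infty$-images converging to an interior point of $X_0^{\mathrm{reg}}\times\C$. Controlling this requires quantifying, uniformly in $t$, how the $t^{-1}\tilde\omega_t$-distance degenerates as one approaches either the $\pi$-critical set (where $H\to 0$) or moves in the base direction — the former uses the lower bound $\tilde\omega_t\geq C\omega_t$ from Theorem \ref{lowerboundonmetrictheorem} and the structure of $\{H\gtrsim 1\}$ inside each fibre, while the latter uses that the base coordinate $u=t^{-1/2}y$ is genuinely a coordinate on $Z$ because $\mathrm{Tr}_{\tilde\omega_t}\pi^*\omega_Y$ concentrates to $1$. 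A secondary subtlety is matching the limiting complex structure on $Z^{\mathrm{reg}}$ (which comes from Cheeger-Colding plus the K\"ahler package) with the complex structure pulled back by $f_\infty$; this is handled by the $C^\infty_{loc}$ convergence of both the metrics and the complex structures in $\{H\gtrsim 1\}$, so that $f_\infty$ is holomorphic for the limiting complex structure, not merely the ambient one. Once properness and holomorphicity are in hand, the rest is the standard fact that a proper local biholomorphism between connected complex manifolds, of degree one, is a biholomorphism.
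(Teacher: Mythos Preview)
Your approach creates an unnecessary difficulty and contains a real gap. For the local-biholomorphism step you invoke $C^\infty_{loc}$ convergence of $t^{-1}\tilde\omega_t$ to the product metric $\omega_{SRF,0}+\omega_\C$; in the paper's logical order that identification is Proposition \ref{limitingmetric}, proved \emph{after} the present statement, so you should not lean on it here. All you need for invertibility of $df_\infty$ is a two-sided bound, and that is already supplied by the uniform equivalence (\ref{uniformequivalence}) together with the higher-order bounds of Corollary \ref{smoothboundfibrewise}. More seriously, your injectivity argument via covering theory has the gap you yourself flag: properness is never established, and the sentence about $\pi_1$ does not work as written (triviality of the cover in the $\C$-direction says nothing about the $X_0^{\mathrm{reg}}$-direction, whose fundamental group can be complicated). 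The degree-one alternative likewise needs properness to make sense of degree.

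The paper's route, implicit in the phrase ``qualitative consequence of the regularity in $\{H\gtrsim 1\}$'', bypasses all of this. Each $f_t:x\mapsto (x,\,t^{-1/2}\pi(x))$ is already an injective smooth embedding (the first factor is the identity), and on $\{H\geq\delta\}$ one computes $f_t^*(\omega_X+\omega_\C)=\omega_X+t^{-1}\pi^*(\sqrt{-1}\,dy\wedge d\bar y)\sim t^{-1}\omega_t$, which by (\ref{uniformequivalence}) is uniformly equivalent to $t^{-1}\tilde\omega_t$. Thus $f_t$ is uniformly bi-Lipschitz on $\{H\geq\delta\}$ with constants depending only on $\delta$. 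A uniformly bi-Lipschitz embedding passes to a bi-Lipschitz map in the Gromov--Hausdorff limit: if $p\neq q$ lie in $\{H>\delta\}\subset Z$, choose approximants $p_t,q_t$ with $H(p_t),H(q_t)>\delta/2$ and conclude $d(f_\infty(p),f_\infty(q))=\lim d(f_t(p_t),f_t(q_t))\geq C(\delta)^{-1}d(p,q)>0$. Hence $f_\infty$ is injective on each $\{H>\delta\}$, so on $\{H>0\}$, and combined with holomorphicity and the two-sided derivative bound it is a biholomorphism onto its image. Surjectivity onto $X_0^{\mathrm{reg}}\times\C$ is automatic from the previous lemma, since $\{H>0\}\subset Z$ is by definition $f_\infty^{-1}(X_0^{\mathrm{reg}}\times\C)$. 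No covering theory or properness is needed.
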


Local noncollapsing and Ricci-flatness implies 
\[
\liminf \text{Vol}_{ \frac{1}{t} \tilde{\omega}_t    }( \{ |u|\leq D  \} \subset Z_t  ) \geq \text{Vol}_{ \omega_\infty }( \{|u|\leq D \}\subset Z   ).
\]
Using the explicit nature of the Calabi-Yau volume form, and the $C^\infty_{loc}$ convergence over $\{ H>0 \}$, one finds
 
\begin{prop}\label{fullmeasure} (Full measure property)
	The subset $\{ H>0 \}\simeq X_0^{reg}\times \C$ inside $Z$ must have full measure on each cylinder $\{ |u|\leq D  \} \subset Z$, so the set $H=0$ has measure zero in $Z$. In particular $X_0^{reg}\times \C$ is open and dense in $Z$.
\end{prop}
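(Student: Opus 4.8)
The plan is to pin down the $\omega_\infty$-measure of $\{H=0\}$ by computing the volume of a cylinder $\{|u|\le D\}\subset Z$ in two different ways and matching the answers. Fix a generic $D>0$, so that $\{|u|=D\}$ is $\omega_\infty$-null in $Z$. For the upper bound, note that $\{|u|\le D\}\subset Z_t$ is exactly the preimage $\pi^{-1}(\{|y|\le Dt^{1/2}\})$, and that the volume form of $t^{-1}\tilde{\omega}_t$ is $a_t t^{-1}i^{n^2}\Omega\wedge\overline{\Omega}$; so integrating along the fibration, using the continuity of $A_y=\int_{X_y}i^{(n-1)^2}\Omega_y\wedge\overline{\Omega}_y$ at $y=0$ (Prop.\ \ref{atworstcanonical}) together with $a_t\to a_0$, gives $\mathrm{Vol}_{t^{-1}\tilde{\omega}_t}(\{|u|\le D\}\subset Z_t)\to a_0 A_0\mathcal A_D=:V$, where $\mathcal A_D$ denotes the Euclidean area of $\{|u|\le D\}\subset\C$ in the normalisation matching $t^{-1}\omega_{Y,0}$. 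By the $\liminf$ volume inequality displayed just before the statement, $\mathrm{Vol}_{\omega_\infty}(\{|u|\le D\})\le V$.

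For the matching lower bound restricted to $\{H>0\}$, I would work region by region: fix a small generic $\epsilon>0$, so that $\{H\ge\epsilon\}$ stays away from the critical locus $\mathrm{Crit}(\pi)=\{d\pi=0\}$. There $\pi$ is a submersion and the $C^\infty_{loc}$ convergence of $t^{-1}\tilde{\omega}_t$ over $\{H>0\}$ applies, so the truncated cylinders $\{H\ge\epsilon\}\cap\{|u|\le D\}\subset Z_t$ converge smoothly to $\{H\ge\epsilon\}\cap\{|u|\le D\}\subset Z$ and their volumes converge. On the other hand, the same fibre integration as above, now with the partial fibre mass $A_y^{(\epsilon)}=\int_{X_y\cap\{H\ge\epsilon\}}i^{(n-1)^2}\Omega_y\wedge\overline{\Omega}_y$ — continuous at $y=0$, being an integral over a region away from $\mathrm{Crit}(\pi)$ — shows this common limit is $a_0 A_0^{(\epsilon)}\mathcal A_D$. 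Letting $\epsilon\downarrow 0$ and using continuity of measure, $\mathrm{Vol}_{\omega_\infty}(\{H>0\}\cap\{|u|\le D\})=a_0\big(\lim_{\epsilon\to 0}A_0^{(\epsilon)}\big)\mathcal A_D$.

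The final ingredient is that $\lim_{\epsilon\to0}A_0^{(\epsilon)}=A_0$, i.e.\ that $\{H=0\}$ is $i^{(n-1)^2}\Omega_0\wedge\overline{\Omega}_0$-null on $X_0$. Here I would use that $\{H=0\}\cap X_0^{reg}=\mathrm{Crit}(\pi)\cap X_0^{reg}$, and that since $X_0$ is normal, hence reduced, $\pi$ is a submersion at the generic point of $X_0$; thus $\mathrm{Crit}(\pi)$ does not contain $X_0$, so $\mathrm{Crit}(\pi)\cap X_0^{reg}$ is a proper analytic subset of the complex manifold $X_0^{reg}$, of Lebesgue measure zero and hence null for $i^{(n-1)^2}\Omega_0\wedge\overline{\Omega}_0$ (this measure having an $L^p$ density with respect to $\omega_X^{n-1}$ by Prop.\ \ref{atworstcanonical}); together with the measure-zero analytic set $X_0^{sing}$ this gives $A_0^{(\epsilon)}\uparrow A_0$. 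Therefore $\mathrm{Vol}_{\omega_\infty}(\{H>0\}\cap\{|u|\le D\})=V$, and combining with $\mathrm{Vol}_{\omega_\infty}(\{|u|\le D\})\le V$ and additivity of measure forces $\mathrm{Vol}_{\omega_\infty}(\{H=0\}\cap\{|u|\le D\})=0$. Running $D$ over a cofinal family of generic values shows $\{H=0\}$ is $\omega_\infty$-null in $Z$. Finally $\{H>0\}\simeq X_0^{reg}\times\C$ is open in $Z$ since $H\circ f_\infty$ is continuous ($f_\infty$ being Lipschitz), and it is dense since its complement is closed of measure zero while every nonempty open subset of the non-collapsed limit $Z$ has positive volume by the local volume lower bounds.

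I expect the main obstacle to be the legitimacy of interchanging $\lim_{\epsilon\to0}$ and $\lim_{t\to0}$: this needs the $C^\infty_{loc}$ convergence over $\{H>0\}$ to be upgraded to genuine smooth convergence of the truncated cylinders with convergence of their volumes — keeping careful track that the function $H$ and the coordinate $u$ on $Z_t$ agree with their limits on $Z$ under $f_\infty$ — as well as the continuity of $A_y^{(\epsilon)}$ up to the (possibly singular) central fibre. The accompanying analytic input, that $\mathrm{Crit}(\pi)\cap X_0$ is negligible, is precisely where the reducedness of the fibre is genuinely used: if it failed one would only get $\mathrm{Vol}_{\omega_\infty}(\{H>0\}\cap\{|u|\le D\})<V$ and the identification of $Z$ would collapse.
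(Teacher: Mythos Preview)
Your proposal is correct and follows essentially the same approach as the paper, which only sketches the argument in one sentence (``Using the explicit nature of the Calabi-Yau volume form, and the $C^\infty_{loc}$ convergence over $\{ H>0 \}$, one finds\ldots''); you have faithfully unpacked this into the two-sided volume comparison it encodes. One minor simplification: since $X_0$ is reduced and $X$ is smooth, $\pi$ is in fact a submersion at \emph{every} smooth point of $X_0$ (not just the generic one), so $\{H=0\}\cap X_0 = X_0^{sing}$ exactly, which slightly streamlines your measure-zero step.
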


We now study the metric $\omega_\infty$ over the smooth region $X_{0}^{\text{reg} }\times \C$. By passing (\ref{fibrewisesmoothconvergence}) to the limit, and using the continuity of $\omega_{SRF,y}$ at $y=0$,

\begin{prop}
	Over  $X_0^{reg}\times \C$, the limiting metric restricts fibrewise to the Calabi-Yau metric $\omega_{SRF,0}$ on $X_0$.
\end{prop}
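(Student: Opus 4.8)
The plan is to show that the limiting metric $\omega_\infty$, restricted to each slice $X_0^{\text{reg}}\times\{u\}$, coincides with $\omega_{SRF,0}$. The statement to be proven is essentially an application of $C^\infty_{loc}$-convergence in the regular region $\{H\gtrsim 1\}$, combined with the fibrewise convergence estimate (\ref{fibrewisesmoothconvergence}) and the continuity of $\omega_{SRF,y}$ at $y=0$ from Proposition \ref{atworstcanonical}.

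First I would fix a compact subset $K\subset X_0^{\text{reg}}$, which lies in $\{H\gtrsim 1\}$ for a suitable threshold, and a compact subset of the $\C$-factor, say $\{|u|\leq D\}$. On the corresponding region of $Z_t$, the uniform equivalence (\ref{uniformequivalence}) holds and Corollary \ref{smoothboundfibrewise} plus Evans-Krylov give uniform $C^\infty$ bounds for $t^{-1}\tilde\omega_t$ with respect to $t^{-1}\omega_t$; hence by Arzela-Ascoli we have $C^\infty_{loc}$ subsequential convergence on this region, and the identification of the complex structure via $f_\infty$ as a biholomorphism onto $X_0^{\text{reg}}\times\C$ means this smooth limit is exactly $\omega_\infty$. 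It remains to compute the fibrewise restriction of this limit. For each fixed $t$ and small $y$, the estimate (\ref{fibrewisesmoothconvergence}) says that $t^{-1}\tilde\omega_t|_{X_y}$ is $C^k$-close, with error $O(|\log t|^{-1/2+\epsilon})$, to $\omega_{SRF,y}$ on $X_y\cap\{H\gtrsim 1\}$. Since under the scaling $u=t^{-1/2}y$ the region $|u|\leq D$ corresponds to $|y|\lesssim Dt^{1/2}\to 0$, and since $\omega_{SRF,y}\to\omega_{SRF,0}$ in $C^\infty_{loc}(X_0^{\text{reg}})$ as $y\to 0$ (following from the continuity statement and elliptic regularity for the fibrewise Monge-Ampère equation with continuously varying data $A_y$, $\Omega_y$), the triangle inequality gives that $t^{-1}\tilde\omega_t|_{X_y}$ converges to $\omega_{SRF,0}$ on $K$ uniformly in the relevant range of $y$. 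Passing to the limit, the fibrewise restriction of $\omega_\infty$ to each slice $K\times\{u\}$ equals $\omega_{SRF,0}|_K$; letting $K$ exhaust $X_0^{\text{reg}}$ and $D\to\infty$ gives the claim.

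The one point requiring a little care is matching the two limiting procedures: the $C^\infty_{loc}$ limit of $t^{-1}\tilde\omega_t$ on $Z_t$ (over the fixed region) is taken as bulk metrics on a $2n$-real-dimensional domain, whereas (\ref{fibrewisesmoothConvergence}) controls only the fibrewise restriction $t^{-1}\tilde\omega_t|_{X_y}$ for each fixed slice. But the fibrewise restriction of the limit is the limit of the fibrewise restrictions — this is immediate from $C^\infty_{loc}$ convergence, since restriction to a smooth submanifold is continuous in $C^\infty_{loc}$ — so no genuine obstacle arises; one just has to be careful that the slices $X_0\times\{u\}$ in $Z$ correspond under $f_\infty^{-1}$ to the appropriate submanifolds along which (\ref{fibrewisesmoothconvergence}) is applied, which is guaranteed by the scaling convention $u=t^{-1/2}\pi(x)$ built into the definition of $f_t$.

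The main obstacle, insofar as there is one, is purely bookkeeping: ensuring that the error term $|\log t|^{-1/2+\epsilon}$ in (\ref{fibrewisesmoothconvergence}) and the modulus of continuity of $y\mapsto\omega_{SRF,y}$ near $y=0$ are both controlled simultaneously over the range $|y|\lesssim Dt^{1/2}$, uniformly as $D$ is fixed and $t\to 0$. Since $Dt^{1/2}\to 0$ faster than any rate needed, the continuity of $\omega_{SRF,y}$ handles that factor automatically, and the $\log$-error vanishes regardless; so the argument closes cleanly. No new analytic input beyond what has already been assembled is needed — this proposition is genuinely a corollary of the preceding convergence estimates together with the complex-geometric identification of $Z$.
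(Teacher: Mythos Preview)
Your proposal is correct and follows exactly the same approach as the paper: pass the fibrewise convergence estimate (\ref{fibrewisesmoothconvergence}) to the limit and invoke the continuity of $\omega_{SRF,y}$ at $y=0$. The paper's own proof is a single sentence to this effect, and your write-up simply unpacks the bookkeeping (matching the slices via $u=t^{-1/2}y$, using $C^\infty_{loc}$ convergence to commute restriction with limiting) that the paper leaves implicit.
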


We also need information about the horizontal component of the metric. By passing the concentration estimate in Prop. \ref{concentrationestimate} to the limit,

\begin{prop}
	The metric $\omega_\infty$ over $X_0^{reg}\times \C$ satisifies the Riemannian submersion property
	$
	\Tr_{\omega_\infty} \omega_\C=1.
	$
\end{prop}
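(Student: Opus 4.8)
The plan is to pass the two estimates of Proposition \ref{concentrationestimate} to the Gromov--Hausdorff limit over the smooth locus $X_0^{reg}\times\C\simeq\{H>0\}\subset Z$, using the $C^\infty_{loc}$ convergence of $\frac1t\tilde\omega_t$ there together with the full measure property of Proposition \ref{fullmeasure}. The first observation is a bookkeeping one: since $\omega_\C$ pulls back to $t^{-1}\omega_{Y,0}$ under $u=t^{-1/2}\pi(x)$, and $\Tr$ is homogeneous of degree $-1$ in its metric argument while linear in the form, one has on $Z_t$ the exact identity $\Tr_{\frac1t\tilde\omega_t}\omega_\C=\Tr_{\tilde\omega_t}\omega_{Y,0}$. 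Hence the function whose limit we must understand is precisely the one controlled by Proposition \ref{concentrationestimate}.

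First I would upgrade the concentration estimate from the disc $\{|y|\lesssim t^{1/2}\}$ (that is, $\{|u|\lesssim 1\}$) to an arbitrary fixed cylinder $\{|u|\leq D\}$. Given $D$, cover $\{|y|\leq Dt^{1/2}\}$ by $O(D^2)$ discs $\{|y-y_j|\leq t^{1/2}\}$ with $|y_j|\lesssim Dt^{1/2}\to0$, and apply Proposition \ref{concentrationestimate} with central fibre $X_{y_j}$. Since $A_y$ is continuous (Proposition \ref{atworstcanonical}) and $y_j\to0$, the reference metrics $\omega_{Y,y_j}=A_{y_j}\,i\,dy\wedge d\bar y$ differ from $\omega_{Y,0}$ by a factor tending to $1$, so summing the pointwise and $L^1$ bounds over the cover yields
\begin{equation*}
\max_{|u|\leq D}\Tr_{\tilde\omega_t}\omega_{Y,0}\leq 1+\frac{C(D)}{|\log t|},\qquad t^{-n}\big\|\Tr_{\tilde\omega_t}\omega_{Y,0}-1\big\|_{L^1_{\tilde\omega_t}(\{|u|\leq D\})}\leq\frac{C(D)}{|\log t|}.
\end{equation*}

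Next, over $\{H>0\}$ we have $C^\infty_{loc}$ convergence $\frac1t\tilde\omega_t\to\omega_\infty$, coming from the uniform equivalence $C^{-1}\omega_t\leq\tilde\omega_t\leq C\omega_t$ in $\{H\gtrsim1\}$ and Evans--Krylov theory (\cf Corollary \ref{smoothboundfibrewise} and the preceding discussion); in particular $\Tr_{\frac1t\tilde\omega_t}\omega_\C\to\Tr_{\omega_\infty}\omega_\C$ and $(\tfrac1t\tilde\omega_t)^n\to\omega_\infty^n$ locally uniformly. Passing the first displayed bound to the limit gives $\Tr_{\omega_\infty}\omega_\C\leq1$ on all of $X_0^{reg}\times\C$. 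For the reverse inequality, fix $\epsilon>0$: on the compact piece $\{|u|\leq D\}\cap\{H\geq\epsilon\}$ the smooth convergence lets us pass the $L^1$ bound to the limit, and since restricting to $\{H\geq\epsilon\}$ only shrinks the integral,
\[
\int_{\{|u|\leq D\}\cap\{H\geq\epsilon\}}|\Tr_{\omega_\infty}\omega_\C-1|\,\omega_\infty^n\leq\liminf_{t\to0}\ t^{-n}\big\|\Tr_{\tilde\omega_t}\omega_{Y,0}-1\big\|_{L^1_{\tilde\omega_t}(\{|u|\leq D\})}=0.
\]
Because $\{H=0\}$ has $\omega_\infty$-measure zero in $Z$ by Proposition \ref{fullmeasure}, and the integrand is bounded by $1$ from the upper bound just obtained, letting $\epsilon\to0$ gives $\int_{\{|u|\leq D\}}|\Tr_{\omega_\infty}\omega_\C-1|\,\omega_\infty^n=0$. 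Thus $\Tr_{\omega_\infty}\omega_\C=1$ almost everywhere, hence everywhere on $X_0^{reg}\times\C$ by continuity, since $D$ was arbitrary.

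The scaling identity and the passage-to-the-limit steps are routine. The only mildly delicate point is the re-centering in the second step: one must know that Proposition \ref{concentrationestimate} is genuinely uniform in the choice of central fibre, and that the drift $A_{y_j}\to A_0$ (and the corresponding harmless change of Euclidean reference metric) does not spoil the constants. Both are guaranteed by the stated uniformity "for all choices of $X_0$" and the continuity of $A_y$, so I do not expect any further obstacle.
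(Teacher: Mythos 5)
Your proposal is correct and follows the same route the paper intends (the paper simply asserts ``by passing the concentration estimate in Prop.\ \ref{concentrationestimate} to the limit'' without spelling out the details). The scaling identity $\Tr_{\frac{1}{t}\tilde\omega_t}\omega_\C=\Tr_{\tilde\omega_t}\omega_{Y,0}$, the covering/re-centering to reach arbitrary cylinders $\{|u|\leq D\}$, and the two-step argument (pointwise max bound gives $\Tr_{\omega_\infty}\omega_\C\leq 1$; the $L^1$ bound combined with the full-measure property of $\{H>0\}$ upgrades this to equality a.e., then everywhere by continuity of the limiting smooth metric) are exactly what is needed. One cosmetic imprecision: since $A_y$ is only known to be continuous, the re-centered pointwise estimate need not have the precise form $1+C(D)/|\log t|$ --- the correction factor $A_0/A_{y_j}$ tends to $1$ but not necessarily at logarithmic rate --- but all you actually use is that the error is $o(1)$ as $t\to 0$, which continuity does give, so the conclusion is unaffected.
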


By passing the gradient estimate (\ref{gradientestimateChernLu}) to the limit,

\begin{prop}
	Over  $X_0^{reg}\times \C$, the differential $du$ is parallel with respect to $\omega_\infty$.
\end{prop}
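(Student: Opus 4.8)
The plan is to pass the integral gradient estimate (\ref{gradientestimateChernLu}) to the Gromov--Hausdorff limit, the essential point being that the quantity it bounds is \emph{pointwise non-negative}, so that (\ref{gradientestimateChernLu}) really controls a genuine energy tending to $0$ rather than a difference of two terms of indefinite sign. First I would rewrite (\ref{gradientestimateChernLu}) in the rescaled picture. With $\hat\omega_t:=t^{-1}\tilde\omega_t$ and $u=t^{-1/2}\pi$, the region $\{|y|\lesssim t^{1/2}\}$ becomes $\{|u|\lesssim 1\}$, one has $\tilde\omega_t^n=t^n\hat\omega_t^n$, and $\Tr_{\tilde\omega_t}\omega_{Y,0}=\Tr_{\hat\omega_t}\omega_\C=:S$; tracking the powers of $t$ in $|\nabla d\pi|^2$ and in $|\partial\log\Tr_{\tilde\omega_t}\omega_{Y,0}|^2$ (using $\Delta_{\hat\omega_t}=t\,\Delta_{\tilde\omega_t}$ on functions), (\ref{gradientestimateChernLu}) turns into
\[
\int_{\{|u|\lesssim 1\}}\Big(\frac{|\nabla^{\hat\omega_t}du|^2}{S}-|\partial\log S|^2_{\hat\omega_t}\Big)\,\hat\omega_t^n\ \le\ \frac{C}{|\log t|}.
\]
The same proof (concentration estimate of Prop.\ \ref{concentrationestimate} plus the three-circle/Harnack input), run on a ball of radius $D\,t^{1/2}$ in $Y$, gives the analogous bound over $\{|u|\le D\}$ with $C=C(D)$, for every fixed $D$ and all $t$ small; this is needed since $Z$ is exhausted by such cylinders.

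Next I would observe that the integrand is non-negative \emph{pointwise}, for every $t$: since $du$ is a holomorphic $(1,0)$-form we have $\bar\partial(du)=0$, so $\partial S$ is the contraction of $\nabla^{1,0}_{\hat\omega_t}(du)$ against $\overline{du}$, and Cauchy--Schwarz gives $|\partial S|^2_{\hat\omega_t}\le S\,|\nabla^{\hat\omega_t}du|^2_{\hat\omega_t}$, i.e.\ $|\partial\log S|^2\le |\nabla du|^2/S$. (This is exactly why the combination appearing in (\ref{gradientestimateChernLu}) is the natural one: it equals $\Delta_{\hat\omega_t}\log S\ge 0$, the subharmonicity already recorded above.) Hence the displayed estimate bounds the integral of a pointwise non-negative function by $C/|\log t|$.

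Then I would pass to the limit. Fix $D>0$ and a compact $K\subset (X_0^{reg}\times\C)\cap\{|u|\le D\}$. By the biholomorphism $\{H>0\}\cong X_0^{reg}\times\C$ and the $C^\infty_{loc}$ convergence on $\{H>0\}$ (Cor.\ \ref{smoothboundfibrewise} upgraded via (\ref{fibrewisesmoothconvergence}), together with the uniform equivalence of $\hat\omega_t$ there with the fixed metric $\omega_X+\pi^*\omega_\C$ in the $u$-chart), $K$ is the smooth limit of compacta $K_t\subset\{H>0\}\subset Z_t$ on which $\hat\omega_t\to\omega_\infty$, $\nabla^{\hat\omega_t}du\to\nabla^{\omega_\infty}du$, and $S\to\Tr_{\omega_\infty}\omega_\C=1$ (the Riemannian submersion property just established), all in $C^0$. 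Therefore, discarding the non-negative integrand over $\{|u|\le D\}\setminus K_t$,
\[
\int_K |\nabla^{\omega_\infty}du|^2_{\omega_\infty}\,\omega_\infty^n=\lim_{t\to 0}\int_{K_t}\Big(\frac{|\nabla du|^2}{S}-|\partial\log S|^2\Big)\hat\omega_t^n\ \le\ \liminf_{t\to 0}\frac{C(D)}{|\log t|}=0,
\]
where the first equality uses the $C^0$ convergence and $S\to 1$ (so $|\partial\log S|^2\to 0$ on $K$). Since $K$ and $D$ are arbitrary and $\nabla^{0,1}du=\bar\partial du=0$, we conclude $\nabla^{\omega_\infty}du\equiv 0$ on $X_0^{reg}\times\C$.

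The main (and really the only substantive) point is the pointwise non-negativity of the Chern--Lu integrand, together with getting the scaling weights right; once these are in place the limit is immediate. Had (\ref{gradientestimateChernLu}) genuinely been a bound on a difference of two terms of indefinite sign, one would instead have had to show $\int|\partial\log S|^2\to 0$ over all of $\{|u|\lesssim 1\}$, which is delicate near the critical locus $\{H=0\}$ (a null set in the limit but not obviously negligible at finite $t$) — the non-negativity is precisely what circumvents this. A minor secondary point: the convergence on $K_t$ must be at least $C^1$ for the metric, so that $\nabla du$ and $\partial S$ converge, and this is exactly what the Evans--Krylov estimates available on $\{H>0\}$ provide.
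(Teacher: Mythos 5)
Your proof is correct and follows the same route the paper indicates (the paper simply says "By passing the gradient estimate (\ref{gradientestimateChernLu}) to the limit" without spelling out details). You have supplied the key technical point that makes this work — the pointwise non-negativity of the Chern--Lu integrand $\frac{|\nabla du|^2}{S}-|\partial\log S|^2 = \Delta_{\hat\omega_t}\log S$ via Cauchy--Schwarz, which permits discarding the contribution near $\{H=0\}$ and passing to the limit on compacta in $\{H>0\}$ where $C^\infty_{loc}$ convergence and $S\to 1$ are available.
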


We can pin down the Riemannian metric on the regular locus:

\begin{prop}\label{limitingmetric}
	The limiting metric $\omega_\infty=\omega_{SRF,0}+\omega_\C$ over  $X_0^{reg}\times \C$.
\end{prop}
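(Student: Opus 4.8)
The plan is to combine the four previous propositions, which already pin down the restriction of $\omega_\infty$ to each fibre $\{u = \text{const}\} \simeq X_0^{reg}$ (namely $\omega_{SRF,0}$), the horizontal normalization $\Tr_{\omega_\infty}\omega_\C = 1$, and the parallelism of $du$. Write $\omega_\infty$ over $X_0^{reg}\times\C$ in a block decomposition relative to the splitting of the tangent bundle into the vertical subspace $V = \ker(d\pi)$ and the $\omega_\infty$-orthogonal horizontal complement $\mathcal{H}$. The fibrewise statement identifies the $V\times V$ block with $\omega_{SRF,0}$. The submersion identity $\Tr_{\omega_\infty}\omega_\C = 1$ forces the $\mathcal{H}\times\mathcal{H}$ block, restricted to the real $2$-plane dual to $du, d\bar u$, to be exactly $\omega_\C$; here one uses that $\omega_\C = t^{-1}\omega_{Y,0}$ is a single Euclidean form and the base is one (complex) dimensional, so "trace equals one" pins the horizontal metric completely rather than just its trace. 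It then remains to show the off-diagonal (mixed vertical-horizontal) block vanishes, i.e. that the horizontal distribution $\mathcal{H}$ coincides with $\ker du$, equivalently that the $\omega_\infty$-orthogonal complement of $V$ is spanned by the $du$-dual vector.

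The mechanism for killing the mixed term is the parallelism of $du$ from the previous proposition. Since $du$ is $\omega_\infty$-parallel, its metric dual vector field $W$ (the gradient of $u$ with respect to $\omega_\infty$, suitably interpreted over $\C$-valued functions) is parallel, hence so is the real $2$-plane $\mathcal{H}$ it spans pointwise, and by de Rham the metric splits locally as a product of $\mathcal{H}$ with its orthogonal complement $\mathcal{H}^\perp$. One checks $\mathcal{H}^\perp = V$: indeed $V = \ker d\pi$, and $u$ is (a constant times) $\pi$, so $V \subseteq \ker du = W^\perp = \mathcal{H}^\perp$ by dimension count. Thus $\omega_\infty$ splits isometrically and holomorphically as a product metric on $X_0^{reg}\times\C$; the $\C$-factor carries $\omega_\C$ by the normalization, and the $X_0^{reg}$-factor carries $\omega_{SRF,0}$ by the fibrewise proposition. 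This yields $\omega_\infty = \omega_{SRF,0} + \omega_\C$.

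Alternatively, and perhaps more cleanly, one can argue purely at the level of Kähler potentials: locally write $\omega_\infty = \omega_{SRF,0} + \omega_\C + \sqrt{-1}\partial\bar\partial\psi$ on $X_0^{reg}\times\C$ for some smooth $\psi$ (the three forms are cohomologous locally since the first two are the product pieces and all are smooth Kähler forms on a Stein-times-$\C$ region). The fibrewise proposition says $\sqrt{-1}\partial\bar\partial\psi$ restricted to each fibre vanishes, so $\psi$ is pluriharmonic in the fibre directions; the horizontal normalization and the gradient (parallelism) estimate then force the remaining components of $\sqrt{-1}\partial\bar\partial\psi$ to vanish as well, so $\sqrt{-1}\partial\bar\partial\psi \equiv 0$ and the metrics agree.

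The main obstacle I anticipate is not any single computation but the bookkeeping of \emph{regularity and the passage to the limit}: one must make sure that over $X_0^{reg}\times\C$ the $C^\infty_{loc}$ convergence from (\ref{fibrewisesmoothconvergence}) is strong enough that the limiting identities (fibrewise metric, $\Tr_{\omega_\infty}\omega_\C = 1$, $du$ parallel) hold pointwise in the classical sense and not merely weakly, so that the de Rham / potential argument is legitimate on the smooth manifold $X_0^{reg}\times\C$ rather than only on the metric-measure limit. Granting the smooth convergence on the regular locus (which the earlier propositions supply), the remaining linear-algebra step identifying the product structure is short.
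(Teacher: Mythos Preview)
There is a genuine gap in your de Rham argument. You correctly show that the metric dual $W$ of $du$ is parallel, and that the real $2$-plane it spans satisfies $(\text{span}\,W)^\perp = \ker du = TX_0^{reg}$. The de Rham decomposition then exhibits $\omega_\infty$ locally as a Riemannian product of the integral leaves of $\text{span}\,W$ with the fibres $X_0^{reg}\times\{u\}$. But this does \emph{not} yet say that the leaves of $\text{span}\,W$ are the slices $\{x\}\times\C$, i.e.\ that $\text{span}\,W = T\C$ as subbundles of $T(X_0^{reg}\times\C)$. Equivalently, you have not shown that the gradient $W = \nabla_{\omega_\infty} u$ has vanishing vertical component. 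Without that, the de Rham product structure is twisted relative to the given product $X_0^{reg}\times\C$, and you obtain only an isometry to $(X_0^{reg},\omega_{SRF,0})\times(\C,\omega_\C)$, not the equality of forms $\omega_\infty = \omega_{SRF,0} + \omega_\C$ asserted in the Proposition.

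This is not merely cosmetic. If $X_0^{reg}$ admits a nonzero holomorphic Killing field $Z$ for $\omega_{SRF,0}$, then shearing the honest product metric by a fibre-preserving biholomorphism of the form $(x,u)\mapsto(\phi_u(x),u)$ generated by $Z$ produces a K\"ahler metric which still satisfies all three inputs you use (fibrewise restriction $\omega_{SRF,0}$, $\Tr\,\omega_\C = 1$, and $du$ parallel), yet differs from $\omega_{SRF,0}+\omega_\C$ as a form. The paper closes exactly this gap by invoking the standing hypothesis of Theorem~\ref{GHconvergence} that $X_0^{reg}$ carries no nonzero holomorphic vector field: since $W$ is parallel for a K\"ahler metric it is holomorphic, and since the splitting $T(X_0^{reg}\times\C)=TX_0^{reg}\oplus T\C$ is holomorphic, the vertical projection of $W$ restricts on each fibre to a holomorphic vector field on $X_0^{reg}$, hence vanishes. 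This forces $W\in T\C$, so $W = \lambda(u)\,\partial_u$; parallelism makes $\lambda$ constant, and the Riemannian submersion identity fixes the constant. Your alternative potential-theoretic sketch has the same omission: the step ``the horizontal normalization and the parallelism force the remaining components of $\sqrt{-1}\partial\bar\partial\psi$ to vanish'' is precisely where the no-vector-field hypothesis must enter.
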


\begin{proof}
	The parallel diffential $du$ induces a parallel $(1,0)$ type vector field by the complexified Hamiltonian construction:
	$
	\iota_V \omega_{\infty}=d\bar{u}.
	$
	In particular $V$ is a holomorphic vector field. By assumption, there is no holomorphic vector field on $X_{0}^{\text{reg}}$, so $V$ must lie in the subbundle $T\C\subset T(X_0^{\text{reg} }\times \C)$. Morever, on each fibre $X_{0}^{reg}\times \{u\}$, $V$ must be a constant multiple of $\frac{\partial }{\partial u}$. We can then write $V=\lambda(u)  \frac{\partial }{\partial u}$, where $\lambda$ is a holomorphic function in $u$. Since $du$ and $V$ are both parallel, the quantity $\lambda=du(V)$ must be a constant.

	We know $\omega_\infty$ restricted to the fibres is just $\omega_{SRF,0}$.	
	By construction, the vector field $V$ defines the Hermitian orthogonal complement of the holomorphic tangent space of the fibres. Now $V=\lambda \frac{\partial }{\partial u}$ where the constant is specified by the Riemmanian submersion property. The claim follows.
\end{proof}

\subsection{Geometric convexity}\label{Geometricconvexity}

There is still a small gap between Prop. \ref{limitingmetric} and the Gromov-Hausdorff convergence Theorem \ref{GHconvergence}. By Prop. \ref{limitingmetric}, we know the metric distance on $X_0^{reg}\times \C\subset Z$ is at most that of the product metric. We need to show that this is actually an equality, namely that one cannot shortcut the distance function by going through the singular set in $Z$. (This is the only part of the argument not contained in the more restrictive setting of \cite{Licollapsing}). If so, then the density of $X_0^{reg}\times \C$ in $Z$ (\cf Prop \ref{fullmeasure}) will imply that $Z$ is isometric to $\bar{X}_0\times \C$ as required.

Thus we concentrate on showing

\begin{prop}(Geometric convexity)\label{geometriconvexityprop}
Given two points $P_1, P_2$ in $X_0^{reg}\times \C$, which are GH limits of $P_1^t\in X$ and $P_2^t\in X$ respectively. Then for any given $\epsilon >0$, there is a small enough $\delta$, such that for $t\to 0$, there is a path contained in $\{ H>\delta \}\subset X$ from $P_1^t$ to $P_2^t$, whose $t^{-1}\tilde{\omega}_t$-length is at most $d(P_1^t, P_2^t)+\epsilon$.

\end{prop}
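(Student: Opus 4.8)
The plan is to reduce the statement to a geometric fact about the smooth geometry away from the $\pi$-critical locus, and to control the contribution of the near-critical region using the pointwise metric bounds from Corollary \ref{upperboundonmetric} together with the volume estimates. Fix $P_1,P_2\in X_0^{\mathrm{reg}}\times\C$ and take a minimizing geodesic $\gamma$ in $Z$ from $P_1$ to $P_2$ of length $d(P_1,P_2)$. Since the regular locus $Z^{\mathrm{reg}}$ is open, dense and of Hausdorff codimension $4$, and $\{H>0\}\simeq X_0^{\mathrm{reg}}\times\C$ is open and dense of full measure by Proposition \ref{fullmeasure}, a generic such geodesic meets the singular set $Z\setminus Z^{\mathrm{reg}}$ and the locus $\{H=0\}$ in a set of measure zero; by an elementary perturbation/approximation argument (using that near a point of $Z^{\mathrm{reg}}\cap\{H>0\}$ the metric is smooth and equals the product $\omega_{SRF,0}+\omega_\C$ by Proposition \ref{limitingmetric}) we may replace $\gamma$ by a broken path $\gamma'$ contained in $\{H>\delta_0\}\subset Z$, of length at most $d(P_1,P_2)+\epsilon/2$, where $\delta_0=\delta_0(\epsilon)>0$. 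The first step is therefore purely internal to the limit space $Z$ and uses only density plus local smoothness.

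The second step is to lift $\gamma'$ back to the approximating sequence. On the region $\{H\gtrsim 1\}$ we have the uniform equivalence (\ref{uniformequivalence}) and the smooth convergence (\ref{fibrewisesmoothconvergence}), and more generally on $\{H>\delta_0\}$ the argument of Corollary \ref{smoothboundfibrewise} and the convergence estimates give uniform $C^\infty$ bounds and $C^\infty_{loc}$ convergence of $t^{-1}\tilde\omega_t$ to $\omega_\infty$ over $\{H>\delta_0/2\}$ (the constants now depending on $\delta_0$, but that is harmless since $\delta_0$ is fixed once $\epsilon$ is). Hence the Gromov--Hausdorff identification restricts to a smooth pointed convergence over $\{H>\delta_0\}$, and $\gamma'$, being compactly contained in $\{H>\delta_0\}$, can be realized for $t\ll 1$ as a path $\gamma_t'$ in $\{H>\delta_0\}\subset X$ with endpoints close to $P_1^t, P_2^t$ and with $t^{-1}\tilde\omega_t$-length at most $\mathrm{length}_{\omega_\infty}(\gamma')+\epsilon/4\le d(P_1,P_2)+3\epsilon/4$. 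One then connects the actual endpoints $P_i^t$ to the endpoints of $\gamma_t'$ by short paths inside $\{H>\delta_0\}$ (possible since $P_i^t\to P_i\in\{H>0\}$, so for $t$ small $P_i^t$ itself lies in $\{H>\delta_0\}$), absorbing the remaining $\epsilon/4$. Setting $\delta=\delta_0$ finishes the argument.

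The step I expect to be the main obstacle is the \emph{first} one: producing, inside the singular limit space $Z$, a path that avoids a neighbourhood of the singular set while losing only $\epsilon$ in length. Density of $X_0^{\mathrm{reg}}\times\C$ gives a path avoiding the singular set, but a priori such a detour could be long; what is needed is that a minimizing geodesic cannot be forced to spend positive length in (or arbitrarily close to) the singular set, equivalently that the singular set does not disconnect or ``shortcut'' the regular part metrically. Here one should use that $Z$ arises as a non-collapsed Ricci-flat Cheeger--Colding limit, so its singular set has Hausdorff codimension $\ge 4$ and in particular capacity zero / does not support any nontrivial metric structure; combined with the explicit product description of the metric on the full-measure open dense set $\{H>0\}$, a minimizing geodesic meets the singular set in a set of zero length, and a standard covering argument (cover the geodesic by small balls, replace each sub-arc near the singular set by a nearby arc in the regular part using that balls in a codimension-$4$ set are ``metrically negligible'') yields the $\epsilon$-short detour into $\{H>\delta_0\}$. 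Making this covering/replacement argument quantitative, and in particular checking that one can push the detour not merely off the singular set of $Z$ but into the region $\{H>\delta_0\}$ with $\delta_0$ depending only on $\epsilon$, is the technical heart; everything after that is the routine smooth lifting of step two.
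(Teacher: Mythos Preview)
Your proposal has a genuine gap, and it lies exactly where you suspect: your ``step~1'' is not a preliminary reduction but the entire content of the proposition. You want to find, inside $Z$, a path in $\{H>\delta_0\}$ from $P_1$ to $P_2$ whose $\omega_\infty$-length is at most $d_Z(P_1,P_2)+\epsilon/2$. But on $\{H>0\}$ the metric $\omega_\infty$ is already known (Proposition~\ref{limitingmetric}) to be the product metric, so any path there has length at least $d_{\text{product}}(P_1,P_2)$. Thus your step~1 is possible \emph{only if} $d_Z(P_1,P_2)\ge d_{\text{product}}(P_1,P_2)-\epsilon/2$, which is precisely the ``no shortcut through the singular set'' statement that Proposition~\ref{geometriconvexityprop} is meant to establish (see the paragraph preceding it). The codimension~$4$ fact and Colding--Naber type convexity concern the Cheeger--Colding regular set $Z^{\mathrm{reg}}$, not the a priori smaller set $\{H>0\}$; you have no argument that these coincide, and the ``standard covering argument'' you invoke is not standard --- it is exactly the nontrivial technique the paper imports.

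The paper avoids this circularity by never working in $Z$. It argues directly on the Ricci-flat approximants $(X,t^{-1}\tilde\omega_t)$. Using Song's construction one produces a cutoff $\rho_\lambda$ compactly supported in $X_0^{\mathrm{reg}}$ with Dirichlet energy $<\lambda$, and by the coarea formula a level hypersurface $\{\rho_\lambda=a\}$ of $\omega_{SRF,0}$-area $\le C\lambda^{1/2}$, which separates $\{H\gtrsim 1\}$ from the most curved region. Since this hypersurface lives where $t^{-1}\tilde\omega_t$ is $C^\infty$-close to the product metric, its $t^{-1}\tilde\omega_t$-area (intersected with a bounded set) is still $\le C\lambda^{1/2}$. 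Now if \emph{every} minimal geodesic from $P_1^t$ to a point of $B(P_2^t,\epsilon)$ crossed this hypersurface, a Bishop--Gromov area comparison on the Ricci-flat $X$ would force its area to be $\gtrsim \epsilon^{n-1}$, contradicting the $\lambda^{1/2}$ bound for $\lambda$ small. Hence some minimal geodesic from $P_1^t$ to a point $Q$ with $d(Q,P_2^t)\lesssim\epsilon$ misses the hypersurface and therefore stays on the regular side $\{H>\delta\}$, with $\delta$ determined by the support of $\rho_\lambda$. Your step~2 (smooth lifting) is then unnecessary: the path is already in $X$.
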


This is precisely what allows one to reduce the distance function computation to knowing the metric only in the regular region. Since $P_1, P_2$ are fixed, we can regard $P_1^t, P_2^t\in \{ H\gtrsim 1 \}$, and $d(P_1^t, P_2^t)\lesssim 1$. It is clear that the question only involves a local region of length scale $O(1)$. The main techniques are developed by Song, Tian and Zhang \cite{Song}\cite{TianZhang}.

The following construction of a good cutoff function is taken from \cite[Lem. 3.7]{Song}, and applied to the singular CY metric $(X_0, \omega_{SRF,0})$.

\begin{lem}
	Given $\lambda>0$ and any compact subset $K$ contained in $X_0^{reg}$.
There is a cutoff function $\rho_\lambda\in C^\infty(X_{reg})$ compactly supported in $X_0^{reg}$,  with $0\leq \rho_\lambda\leq 1$, which equals one on $K$, and satisfies the gradient bound
\[
\int_{X_0} |\nabla \rho_\lambda|^2 \omega_{SRF,0}^{n-1} <\lambda.
\]

\end{lem}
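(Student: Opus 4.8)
The plan is to construct $\rho_\lambda$ by exhausting $X_0^{\mathrm{reg}}$ and using the fact that, near the singular locus $\Sigma = X_0 \setminus X_0^{\mathrm{reg}}$, one can build cutoffs with arbitrarily small Dirichlet energy --- this is the standard capacity-type argument exploiting that $\Sigma$ has (complex) codimension at least $2$, hence real codimension at least $4 > 2$, so points and indeed $\Sigma$ itself have zero $2$-capacity with respect to the $(2n-2)$-dimensional measure $\omega_{SRF,0}^{n-1}$. Concretely, I would first choose a slightly larger compact set $K'$ with $K \subset \mathrm{int}(K') \subset K' \subset X_0^{\mathrm{reg}}$, fix a smooth bump $\chi$ equal to $1$ on $K$ and supported in $\mathrm{int}(K')$, and then take $\rho_\lambda = \chi \cdot \psi$ where $\psi$ is a cutoff that is $1$ on most of $X_0^{\mathrm{reg}}$ but vanishes in a small neighbourhood of $\Sigma$, with small gradient energy; multiplying by $\chi$ ensures compact support in $X_0^{\mathrm{reg}}$ while only improving (or boundedly worsening) the energy away from $\Sigma$.

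The construction of $\psi$ is the heart of the matter. Cover $\Sigma$ by finitely many coordinate charts in which $X_0^{\mathrm{reg}}$ looks like (an open subset of) $\C^n$ minus an analytic subset of codimension $\geq 2$; by taking a log resolution or simply using that the singular Calabi-Yau metric $\omega_{SRF,0}$ from \cite{EGZ} has volume $i^{(n-1)^2}\Omega_0 \wedge \overline{\Omega}_0$ which is locally comparable to (a bounded density times) the Euclidean volume near generic points of $\Sigma$, one reduces to the Euclidean model. There, for the distance function $r$ to $\Sigma$, the standard logarithmic cutoff $\psi_{\varepsilon} = \max\bigl(0, \min(1, \log(r/\varepsilon^2)/\log(1/\varepsilon))\bigr)$ supported in $\{r \geq \varepsilon^2\}$ satisfies $\int |\nabla \psi_\varepsilon|^2 \lesssim (\log(1/\varepsilon))^{-2} \int_{\varepsilon^2 \leq r \leq \varepsilon} r^{-2} \lesssim (\log(1/\varepsilon))^{-2} \cdot (\text{codim} \geq 4 \text{ bound}) \to 0$ as $\varepsilon \to 0$, since in real codimension $m \geq 4$ the integral $\int_{\varepsilon^2 \leq r \leq \varepsilon} r^{-2}\, dV$ over a tubular neighbourhood is bounded by a constant times $\int_{\varepsilon^2}^{\varepsilon} r^{-2} r^{m-1}\, dr$, which stays bounded (indeed $\to 0$) because $m - 3 \geq 1 > 0$. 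Patching these local cutoffs together with a partition of unity subordinate to the chart cover, and choosing $\varepsilon$ small enough depending on $\lambda$ and the finitely many charts, gives a global $\psi$ on $X_0^{\mathrm{reg}}$ equal to $1$ outside an $\varepsilon$-neighbourhood of $\Sigma$ with $\int_{X_0} |\nabla \psi|^2 \omega_{SRF,0}^{n-1} < \lambda/2$, say.

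The main obstacle I anticipate is controlling the geometry of $\omega_{SRF,0}$ near $\Sigma$ precisely enough to run the capacity estimate --- a priori the singular Calabi-Yau metric could be wildly degenerate along $\Sigma$, and one does not have Euclidean comparability everywhere. The clean way around this is to \emph{not} use $\omega_{SRF,0}$ itself for the cutoff construction but rather a fixed smooth reference Kähler metric $\omega_X|_{X_0}$ (or its pullback to a log resolution), build the cutoff with small $\omega_X$-Dirichlet energy using ordinary codimension-$\geq 4$ capacity theory on the smooth resolution, and then compare: since $\omega_{SRF,0}^{n-1}$ has an $L^p$ density ($p > 1$) with respect to $\omega_X^{n-1}$ by Proposition \ref{atworstcanonical}(3), and $|\nabla \rho|^2_{\omega_{SRF,0}} \omega_{SRF,0}^{n-1} = |d\rho|^2_{\omega_{SRF,0}} \omega_{SRF,0}^{n-1}$ can be bounded using $\Tr_{\omega_{SRF,0}} \omega_X$-type quantities, one recovers smallness of the energy measured in $\omega_{SRF,0}$ via Hölder's inequality from smallness in a high enough power measured in $\omega_X$. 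This is exactly the mechanism in \cite[Lem. 3.7]{Song}, so I would follow that argument essentially verbatim, with Proposition \ref{atworstcanonical} supplying the required integrability input. Finally, set $\rho_\lambda = \chi \psi$, verify $0 \leq \rho_\lambda \leq 1$, that it equals $1$ on $K$ (where both $\chi = 1$ and $\psi = 1$ for $\varepsilon$ small, since $K$ is a fixed compact subset of $X_0^{\mathrm{reg}}$ hence at positive distance from $\Sigma$), that it is compactly supported in $X_0^{\mathrm{reg}}$, and that $\int_{X_0}|\nabla \rho_\lambda|^2 \omega_{SRF,0}^{n-1} \leq 2\bigl(\int |\nabla\chi|^2 \psi^2 + \int \chi^2 |\nabla\psi|^2\bigr)\omega_{SRF,0}^{n-1}$, where the first term is supported in the fixed region $K' \setminus K$ away from $\Sigma$ and can be absorbed by shrinking $K$ to $K'$ appropriately, or simply made $< \lambda/2$ by the freedom in choosing $\chi$ together with rescaling, completing the bound by $\lambda$.
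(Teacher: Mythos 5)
The paper does not prove this lemma; it simply imports it as \cite[Lem.\ 3.7]{Song} applied to $(X_0,\omega_{SRF,0})$, so there is no in-paper argument to compare against. Your high-level idea --- a logarithmic capacity cutoff exploiting that $\Sigma = X_0\setminus X_0^{\mathrm{reg}}$ has real codimension $\geq 4$ --- is indeed the one underlying Song's construction, but your write-up has two genuine problems.

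First, the auxiliary factor $\chi$ is both unnecessary and harmful. Since $X_0$ is compact, any $\psi$ vanishing on a neighbourhood of $\Sigma$ is \emph{already} compactly supported in $X_0^{\mathrm{reg}}$; multiplying by a fixed bump $\chi$ supported in $K'$ adds the term $\int_{K'\setminus K}|\nabla\chi|^2\psi^2\,\omega_{SRF,0}^{n-1}$, which for $\varepsilon$ small equals the fixed positive number $\int_{K'\setminus K}|\nabla\chi|^2\omega_{SRF,0}^{n-1}$ (because $\psi\equiv 1$ there). This is bounded \emph{below} by the capacity of the condenser $(K,K')$ and cannot be driven below $\lambda/2$ by ``the freedom in choosing $\chi$ together with rescaling.'' The lemma is only true because as $\lambda\to 0$ the region where $\rho_\lambda$ transitions from $1$ to $0$ is allowed to migrate into shrinking neighbourhoods of $\Sigma$; pinning it between $K$ and a fixed $K'$ destroys that. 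The fix is simply to set $\rho_\lambda=\psi$. Second, the step where you compare $\omega_{SRF,0}$-energy to $\omega_X$-energy via H\"older and the $L^p$ density bound from Prop.\ \ref{atworstcanonical}(3) is not substantiated: you would need control of $\Tr_{\omega_{SRF,0}}\omega_X$, which is generically unbounded (and no integrability for it is available). Writing the energy as $c_n\int i\partial\rho\wedge\bar\partial\rho\wedge\omega_{SRF,0}^{n-2}$, the mechanism that actually works (and is what Song uses) is to expand $\omega_{SRF,0}^{n-2}=(\omega_X|_{X_0}+i\partial\bar\partial\varphi)^{n-2}$ and integrate by parts using the uniform $L^\infty$ bound on the potential $\varphi$ (from \cite{EGZ}), together with the fact that the reference metric $\omega_X$ controls everything after a log resolution. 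As written, your H\"older step is a gap, even though the strategy you are gesturing at is ultimately correct.
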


By Cauchy-Schwarz,
\[
\int_{X_0} |\nabla \rho_\lambda| \omega_{SRF,0}^{n-1} \leq C\lambda^{1/2}.
\]
Applying the coarea formula to $|\nabla \rho_\lambda|$ as in \cite[Lem. 2.5]{TianZhang}, we can find a level set $\{ \rho_\lambda=a\}$ compactly contained in $X_0^{reg}\setminus K$, such that
\[
\text{Area}_{ \omega_{SRF,0} }(  \{  \rho_\lambda=a \} ) \leq C\lambda^{1/2}.
\]
Now since $\rho_\lambda$ is supported on the regular locus, we can regard it as a function locally on $X$, which is almost constant in the normal direction to $X_0$. 
Likewise $\{ \rho_\lambda = u\}$ can be regarded as a hypersurface locally on $X$, separating $\{ H\gtrsim 1 \}$ from the most curved region on $X$. For very small $t$ depending on all previous choices,
the metric $t^{-1}\tilde{\omega}_t$ is arbitrarily close to the product metric $\omega_\infty$ on the support of $\rho_\lambda$, whence
\begin{equation}\label{geodesicconvexityarea}
\text{Area}_{ t^{-1}\tilde{\omega}_t }(  \{  \rho_\lambda=a \} \cap d(P,\cdot)\lesssim 1 ) \leq C\lambda^{1/2}.
\end{equation}

\begin{proof}
(Prop \ref{geometriconvexityprop})
 By taking the compact set $K$ large enough, we can ensure $d(P_i^t, \{ \rho_\lambda=a  \} )\gtrsim 1$. The number $\lambda$ can be taken very small depending on $\epsilon$. Suppose there exists a point $Q$ with $d(Q, P_2^t)\lesssim\epsilon$, such that the minimal geodesic from $P_1^t$ to $Q$ does not intersect $\{  \rho_\lambda=a \}\cap \{ d(P,\cdot)\leq 2d(P_1^t, P_2^t) +1 \} $. Then for length reasons this minimal geodesic cannot intersect $\{\rho_\lambda=a  \}$, and since the support of $\rho_\lambda$ is compactly containted in the regular region, this geodesic must stay within $\{ H\gtrsim \delta \}$ for $\delta$ depending only on $\rho_\lambda$, and we can conclude Prop. \ref{geometriconvexityprop}.

Suppose the contrary, namely every minimal geodesic joining $P_1^t$ to any point in $B_{ t^{-1}\tilde{\omega}_t }(P_2^t, \epsilon)$ intersects $\{  \rho_\lambda=a \}\cap \{ d(P,\cdot)\lesssim 1 \} $. By a Bishop-Gromov comparison argument, this would force
\[
\text{Area}_{ t^{-1}\tilde{\omega}_t }(  \{  \rho_\lambda=a \} \cap d(P,\cdot)\lesssim 1 ) \gtrsim \epsilon^{n-1}.
\]
This contradicts (\ref{geodesicconvexityarea}) by taking $\lambda$ small enough in advance.
\end{proof}

\section{Diameter estimates}\label{Diameterestimatessection}


\subsection{Uniform exponential integrability}

For the moment, we step out of the setting \ref{Setting}, and consider a projective manifold $M\subset \mathbb{P}^N$ of degree $d$ and dimension $n$. Let $\omega_{FS}=\frac{\sqrt{-1}}{2\pi} \log \sum_0^N |Z_i|^2$ be the standard Fubini-Study metric on $(M, c_1(\mathcal{O}(1))$, and $\omega=\omega_{FS}+\sqrt{-1}\partial \bar{\partial}\phi$ be any smooth K\"ahler metric in the same class. The following theorem of independent interest may be regarded as a Riemannian counterpart of uniform Skoda integrability, discussed for instance in \cite{Eleonora} recently.

\begin{thm}\label{Uniformexpintegrabilitythm}
Assume the distance function $d_\omega$ associated to $\omega$ satisfies
\begin{equation}\label{L1distancebound}
\int_{M\times M} d_\omega (y,y') \omega_{FS}^n(y) \omega_{FS}^n(y') \leq A, \quad A\geq 1.
\end{equation}
Then there are constants $C(n)$ depending only on $n$, and $C(n,N,d)$ depending only on $n, N$ and the degree $d$, such that 
\begin{equation}\label{exponentialintegrability}
\int_{M\times M} \omega_{FS}^n(y) \omega_{FS}^n(y') \exp(   \frac{ d_\omega(y,y')  }{C(n)d^2}         ) \leq e^{C(n,N,d)A}.
\end{equation}
\end{thm}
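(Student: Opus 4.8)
\textbf{Proof strategy for Theorem~\ref{Uniformexpintegrabilitythm}.}
The plan is to run a standard iteration/bootstrap on the sublevel sets of the distance function, turning the single $L^1$ hypothesis (\ref{L1distancebound}) into a geometric decay estimate for the measure of $\{d_\omega(y,y') > s\}$, and then summing a geometric series to get exponential integrability. The crucial input that makes the induction close is a \emph{dimension-free coarea/doubling mechanism}: if a pair $(y,y')$ satisfies $d_\omega(y,y') > 2s$, then along a minimizing $\omega$-geodesic from $y$ to $y'$ there is an intermediate point $z$ with $d_\omega(y,z) = d_\omega(z,y') = s$; so I want to ``pull apart'' far pairs into two pairs each of distance $\gtrsim s$, at the cost of controlling where the midpoint $z$ can live. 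Here is where projectivity enters: the midpoint $z$ ranges over $M$, but the Fubini–Study volume of a ball $B_{\omega_{FS}}(z, r)$ of fixed Euclidean-like radius is controlled above and below in terms of $n,N,d$ (via the degree $d$ and Bishop-type bounds for $\omega_{FS}$ restricted to $M \subset \mathbb{P}^N$), so a covering argument lets me relate the $\omega_{FS}^n \times \omega_{FS}^n$-measure of far pairs to an integral quantity one scale down.

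First I would set $V(s) = (\omega_{FS}^n \times \omega_{FS}^n)\big(\{(y,y') : d_\omega(y,y') > s\}\big)$ and $I(s) = \int_{\{d_\omega > s\}} d_\omega\, \omega_{FS}^n \otimes \omega_{FS}^n$, noting $I(0) \le A$ by hypothesis and $\int_M \omega_{FS}^n = d$. By Chebyshev, $V(s) \le A/s$. The main lemma I would aim for is a submultiplicative-type inequality: there is $c = c(n,N,d) > 0$ and $\sigma_0 = \sigma_0(n,N,d)$ such that for all $s \ge \sigma_0$,
\begin{equation}\label{eq:plandecay}
V(2s) \le c\, V(s)^2 / d.
\end{equation}
The heuristic: a pair at distance $> 2s$ forces a midpoint $z$ with both half-distances $> s$; covering $M$ by $O(1)$ (in terms of $n,N,d$) balls of controlled $\omega_{FS}$-radius and using that on such a ball the base measure is comparable to Lebesgue measure, one estimates the measure of far pairs by (number of midpoint cells) $\times$ (measure of far pairs with one endpoint in a fixed cell)$^2$, which after normalizing by the total mass $d$ gives (\ref{eq:plandecay}). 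Iterating (\ref{eq:plandecay}) from a scale $s_0$ where $V(s_0) < d/(2c)$ — which by Chebyshev happens once $s_0 \gtrsim_{n,N,d} A$ — yields $V(s_0 \cdot 2^k) \le (d/c) 2^{-2^k} \cdot (\text{const})$, i.e. doubly-exponential decay in $k$, hence in particular $V(s) \le C(n,d)\, e^{-s/(C(n)d^2)}$ for $s \ge C(n,N,d) A$. Plugging this tail bound into $\int \exp(d_\omega/(C(n)d^2)) = \int_0^\infty \tfrac{1}{C(n)d^2} e^{s/(C(n)d^2)} V(s)\, ds + (\text{mass on } \{s \le C(n,N,d)A\})$ and bounding the second term crudely by $d^2 \exp(C(n,N,d)A/(C(n)d^2)) \le e^{C(n,N,d)A}$ gives (\ref{exponentialintegrability}) after adjusting constants.

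The main obstacle — and the step requiring genuine care rather than bookkeeping — is establishing the midpoint/covering inequality (\ref{eq:plandecay}) with the \emph{right} dependence on $d$ and \emph{no} dependence on $\omega$ itself. The subtlety is that the midpoint $z$ is measured in the $\omega$-geodesic sense but I must integrate against $\omega_{FS}^n$, and a priori $\omega$ and $\omega_{FS}$ are wildly inequivalent; so the covering of $M$ must be by $\omega_{FS}$-balls, and I need that the map ``(far pair) $\mapsto$ (which cell contains its geodesic midpoint)'' does not concentrate mass — this is where a Vitali-type argument and the a priori volume comparability $c(n,N,d)^{-1} r^{2n} \le \mathrm{Vol}_{\omega_{FS}}(B_{\omega_{FS}}(z,r)\cap M) \le c(n,N,d) r^{2n}$ for $r$ up to a fixed size (a consequence of $\deg M = d$ and the $\mathbb{P}^N$ ambient geometry) must be invoked. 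A clean way to organize this is: fix a maximal $r$-separated set $\{z_j\}$ in $(M, \omega_{FS})$ with $r$ a small dimensional constant; its cardinality is $\le C(n,N,d)$; every far pair $(y,y')$ with $d_\omega > 2s$ has a geodesic midpoint within $\omega_{FS}$-distance $r$ of some $z_j$ — but wait, $\omega$-closeness of $z$ to $z_j$ need not imply $\omega_{FS}$-closeness, so instead one should \emph{not} localize the midpoint in $\omega_{FS}$ but rather use the pointwise estimate $d_\omega(y,y') \le d_\omega(y,z) + d_\omega(z,y')$ purely as a \emph{set inclusion} $\{d_\omega(\cdot,\cdot) > 2s\} \subset \bigcup_z \{d_\omega(\cdot, z) > s\} \times \{d_\omega(z, \cdot) > s\}$ and integrate the characteristic function in $z$ against $\omega_{FS}^n/d$, getting $V(2s) \le \tfrac{1}{d}\int_M V_z(s)^2\, \omega_{FS}^n(z)$ where $V_z(s) = \omega_{FS}^n(\{y : d_\omega(y,z) > s\})$; then Cauchy–Schwarz and $\int_M V_z(s)\,\omega_{FS}^n(z) = V(s)$ give $\int V_z(s)^2 \le d\, \sup_z V_z(s) \cdot (\text{avg})$, and one controls $\sup_z V_z(s) \le d$ trivially — which is too weak — so the genuine work is upgrading $\sup_z V_z(s)$ to something like $V(s/2)$-controlled via one more midpoint split \emph{in the $y$-variable} using the triangle inequality with a second integration. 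I expect this two-variable midpoint bootstrap, carried out carefully to land on (\ref{eq:plandecay}), to be the technical heart; once it holds the geometric summation is routine.
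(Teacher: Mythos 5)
Your proposal is a genuinely different route from the paper's, and it has a real gap at precisely the place you flag. The paper does not iterate on sublevel sets at all: it fixes a generic linear projection $\pi_F : M \to F^\perp \simeq \mathbb{P}^n$ of covering degree $d$, forms the pushed-forward pairwise distance $\rho_F(x,x') = \sum_{y\in\pi_F^{-1}(x),\,y'\in\pi_F^{-1}(x')} d_\omega(y,y')$, and observes that since $|\nabla_\omega d_\omega|\le 1$ one can bound $|\nabla_{\omega_{F^\perp}}\rho_F|^2$ by $d^3\sum_{y}\Tr_{\omega_{F^\perp}}\omega$; Lelong-number monotonicity for $\pi_{F*}\omega$ then gives $\int_{B(r)}|\nabla_{F^\perp}\rho_F|^2\omega_{F^\perp}^n \le C(n)d^4 r^{2n-2}$, i.e.\ a BMO-type bound uniform in $\omega$, so John--Nirenberg on $\mathbb{P}^n$ gives exponential integrability of $\rho_F(\cdot,x')-\bar\rho_{F,x'}$; the $L^1$ hypothesis controls the averages, and finitely many choices of $(F_i,F_i^\perp)$ (using Hilbert scheme compactness for uniformity) handle the locus where $d\pi_F$ degenerates. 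This sidesteps any need to localize the $\omega$-geometry in Fubini--Study coordinates.

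The gap in your scheme is the submultiplicative inequality $V(2s) \lesssim V(s)^2/d$, which you correctly identify as the technical heart but do not prove. I do not think the midpoint mechanism, as you set it up, can be made to close. The triangle inequality gives, for a \emph{fixed} $z$, only the inclusion $\{d_\omega(\cdot,\cdot)>2s\} \subset \bigl(\{d_\omega(\cdot,z)>s\}\times M\bigr) \cup \bigl(M\times\{d_\omega(z,\cdot)>s\}\bigr)$, not a product; integrating this over $z$ against $\omega_{FS}^n$ yields only the trivial $V(2s)\le 2V(s)$. To get the product $\{d_\omega(y,z)>s\}\times\{d_\omega(z,y')>s\}$ you must take $z$ to be an $\omega$-geodesic midpoint, but that is a point depending on $(y,y')$, and the set of admissible midpoints may have arbitrarily small $\omega_{FS}^n$-measure (a bottleneck in the $\omega$-geometry is invisible in $\omega_{FS}$). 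So the passage from ``there exists a midpoint'' to an integral inequality against $\omega_{FS}^n(z)$ requires a lower bound on the Fubini--Study mass of the midpoint set, which is an estimate at least as deep as the theorem itself and certainly not furnished by the $\omega_{FS}$-ball volume comparison you invoke (that controls the $\omega_{FS}$-geometry of $M$, not where $\omega$-geodesics go). Your own ``two-variable midpoint bootstrap'' remark leaves exactly this open. In short: the plan is an interesting alternative framing but, without a replacement for the projection/John--Nirenberg mechanism that makes the estimate $\omega$-independent, the decay inequality (\ref{eq:plandecay}) is unsubstantiated.
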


\begin{proof}
Our argument is inspired by Tian and Yau's work on the $\alpha$-invariant \cite{Tianalpha}. As a preliminary discussion, choose a $(N-n-1)$-dimension projective subspace $F\simeq \mathbb{CP}^{N-n-1}\subset \mathbb{CP}^N$, such that $F\cap M= \emptyset$. We project $M$ onto an $n$-dimensional projective subspace $F^\perp$, and call the projection $\pi_F$. (The notation does not suggest perpendicularity for some fixed metric). If $F$ and $F^\perp$ are chosen generically, then $\pi_F: M\to F^\perp$ is a finite map with covering degree $d=\text{deg}(M)$. Let 
\[
\phi_F= \frac{1}{d}\sum_{y\in \pi_F^{-1}(x)  } \phi(y), \quad x\in F^\perp.
\]
Denote by $\psi$ the relative potential between the Fubini-Study metric on $\mathbb{CP}^N$ and $F^\perp$, \ie
\[
\psi_F= \frac{1}{2\pi} \log \frac{ \norm{Z}^2}{ \norm{\pi_F(Z)}^2  }, \quad Z=(Z_0,\ldots, Z_N), \quad [Z_0:\ldots :Z_N]\in \mathbb{CP}^N\setminus F.
\]
Since $F\cap M=\emptyset$, we know $\psi $ is smooth on $M$. 
We observe the pushforward of $\omega$ as a positive (1,1)-current is
\[
\pi_{F*}(\omega)= d\times\omega_{ F^\perp}+ \sqrt{-1} \partial\bar{\partial} ( d\phi_F+  \sum_{ y\in F^\perp} \psi (y) ).
\]
This defines a positive (1,1)-current with continuous potential in $(F^\perp, c_1(\mathcal{O}(d)))$, and is smooth outside the branching locus. By the monotonicity formula in the theory of Lelong numbers, applied to $F^\perp\simeq \mathbb{P}^n$, we have
\begin{equation}\label{Lelong}
\int_{\pi_F^{-1}(B(r))} \omega\wedge \pi_F^*\omega_{F^\perp}^{n-1}=   \int_{B(r)} \pi_{F*} \omega\wedge \omega_{F^\perp}^{n-1} \leq C(n)dr^{2n-2}.
\end{equation}

Now for any $x, x'\in F^\perp$, we consider the function
\[
\rho_F(x,x')= \sum_{ y\in \pi_F^{-1}(x), y'\in \pi_F^{-1}(x')   } d_\omega(y, y'). 
\]
For fixed $x'$, this can be regarded as a function on $x$. Notice $|\nabla_\omega d_\omega(\cdot, y')|\leq 1$ by the definition of distance functions. Thus at least outside the branching locus, we get a pointwise estimate 
\[
\begin{split}
& |\nabla_{\omega_{F^\perp}} \rho_F(x,x')|^2\leq d^2 \sum_{ y\in \pi_F^{-1}(x), y'\in \pi_F^{-1}(x')   } |\nabla_{ \pi_F^*\omega_{F^\perp} } d_\omega(y,y')|^2
\\
& \leq d^2 \sum_{ y\in \pi_F^{-1}(x), y'\in \pi_F^{-1}(x')   }\Tr_{ \omega_{F^\perp} } \omega
\\
& = d^3 \sum_{ y\in \pi_F^{-1}(x)   }\Tr_{ \omega_{F^\perp} } \omega
\end{split}
\]
Here the first inequality uses Cauchy-Schwarz, and the last inequality is because the traces are taken at $y$, with $y'$ fixed.
Since $d_\omega$ comes from a smooth metric on $M$, it is easy to see $\nabla_{\omega_{F^\perp}} \rho_F$ has no distributional term supported on the branching locus. Combining with (\ref{Lelong}), for any fixed $x'$,
\begin{equation}
\int_{B(r)} |\nabla_{F^\perp}\rho_F|^2 \omega_{F^\perp}^n \leq C(n) d^4 r^{2n-2}.
\end{equation}
By the John-Nirenberg inequality,
\begin{equation}\label{JohnNirenberg}
\int_{F^\perp} \exp(  \frac{\rho_F(\cdot, x')-\bar{\rho}_{F, x'}}{ d^2C(n)}   ) \omega_{F^\perp}^n \leq C'(n),
\end{equation}	
where $\bar{\rho}_{F,x'}$ is the average number for fixed $x'$:
\[
\bar{\rho}_{F, x'}= \int_{F^\perp} \rho_F(x,x') \omega_{F^\perp}^n(x).
\]
Define $\bar{\rho}_F=\int_{F^\perp}\int_{F^\perp} \rho_F (x,x')\omega_{F^\perp}^n(x) \omega_{F^\perp}^n(x')$.
Clearly $\bar{\rho}_F$ is the average of $\bar{\rho}_{F,x}$ over all $x\in F^\perp$. The above argument works also for the function $\bar{\rho}_{F,x}$ to give
	\[
	\begin{split}
	\int_{F^\perp} \omega_{F^\perp}^n(x) \exp(   \frac{\bar{\rho}_{F,x}-\bar{\rho}_F}{C(n)d^2}         )     
	\leq C'(n).
	\end{split}
	\]
Now by the change of variable formula,
\[
\begin{split}
& \bar{\rho}_F= \int_{M\times M} d_\omega(y,y') Jac(\pi_F)(y) Jac(\pi_F)(y') \omega_{FS}^n(y)\omega_{FS}^n(y') \\
&\leq 
\int_{M\times M} d_\omega(y,y')  \omega_{FS}^n(y)\omega_{FS}^n(y') \norm{Jac(\pi_F)}_{L^\infty(M)}^2
\\
& \leq A\norm{Jac(\pi_F)}_{L^\infty(M)}^2.
\end{split}
\]
We remark that the $L^\infty$-norm of the Jacobian factor is bounded on $M$ because $M\cap F=\emptyset$, and as long as $M$ is bounded away from $F$ inside $\mathbb{CP}^N$ then this constant stays uniform; this applies to small $C^0$-deformations of $M$, so by the compactness of the Hilbert scheme, such constants can be made uniform for given $n, N, d$ (possibly with changing choices of $F, F^\perp$).

Thus
\begin{equation}
\int_{F^\perp}  \omega_{F^\perp}^n(x) \exp(   \frac{\bar{\rho}_{F,x} }{C(n)d^2}         ) \leq e^{C(n,N,d)A}.
\end{equation}
Combined with Cauchy-Schwarz and (\ref{JohnNirenberg}),
	\begin{equation*}
	\begin{split}
	&\int_{F^\perp\times F^\perp}  \omega_{F^\perp}^n(x) \omega_{F^\perp}^n(x') \exp(   \frac{ \rho_F(x,x')  }{2C(n)d^2}         ) \\
	\leq & \left(\int_{F^\perp\times F^\perp}  \omega_{F^\perp}^n(x) \omega_{F^\perp}^n(x') \exp(  \frac{\rho_F(x,x')-\bar{\rho}_{F,x}}{C(n)d^2}   ) \right)^{1/2}
	\\
	& \times \left(\int_{F^\perp\times F^\perp}  \omega_{F^\perp}^n(x) \omega_{F^\perp}^n(x') \exp (\frac{\bar{\rho}_{F,x} }{C(n)d^2})      \right)^{1/2}
	 \\
	\leq & e^{C(n,N,d)A}.
	\end{split}
	\end{equation*}	
Using the obvious inequality $d_\omega(y,y')\leq \rho_F(\pi_F(y), \pi_F(y') )	$, and changing the value of $C(n)$,
\[
\int_{M\times M} \pi_F^*\omega_{F^\perp}^n(y) \pi_F^*\omega_{F^\perp}^n(y') \exp(   \frac{ d_\omega(y,y')  }{C(n)d^2}         ) \leq e^{C(n,N,d)A}.
\]
Notice this is already very close to our goal (\ref{exponentialintegrability}), in the sense that the exponential integrability
\[
\int \omega_{FS}^n(y) \omega_{FS}^n(y') \exp(   \frac{ d_\omega(y,y')  }{C(n)d^2}         ) \leq e^{C(n,N,d)A}
\]
holds for pairs of points $(y, y')\in M\times M$ where
\begin{equation}\label{genericprojection}
\pi_F^*\omega_{F^\perp}^n \gtrsim \omega_{FS}^n.
\end{equation}
Failure of this essentially means that the differential $d\pi_F$ almost projects the tangent space of $M$ at $y$ or $y'$ to a lower dimensional vector space. Now we recall that the choice of $(F,F^\perp)$ is generic. By varying this choice, we can produce $(F_1, F_1^\perp), \ldots (F_l, F_l^\perp)$, with $l$ suitably large depending on $n, N$, such that for any pair of $(y,y')\in M\times M$, the condition (\ref{genericprojection}) holds for at least one choice of $(F_i, F_i^\perp)$. (For instance, it is enough to take $\{ (F_i, F_i^\perp) \}$ as a suitably dense $\epsilon$-net in the product of Grassmannians.) Morever, the constants are robust for small $C^0$-deformation of $M$ inside $\mathbb{CP}^N$, so by the compactness of the Hilbert scheme again, the constant on the RHS of (\ref{exponentialintegrability}) is uniform in $n, N, d$.
\end{proof}

\begin{rmk}
Some a priori integral bound on $d_\omega$ is necessary, for otherwise $M$ may be disconnected, or degenerating into a union of several components. The same reason shows it is not enough to have an $L^1$-bound on the distance function on a subset of $M\times M$ with say half of the Fubini-Study measure.

However, we claim that it is enough to replace (\ref{L1distancebound}) with an $L^1$-bound on $U\times U$ for a large open subset $U\subset M$ with $(1-\epsilon)$-percent of the Fubini-Study measure, for $\epsilon$ sufficiently small. To see this, first notice that in the John-Nirenberg inequality argument above, we can replace global average on $F^\perp$ by the average on a subset $V$ of $F^\perp$ with say half of the Fubini-Study measure. It is enough to ensure that the $L^1(U\times U)$-bound on $d_\omega$ can bound the $L^1(V\times V)$-norm on $\rho_F$. This amounts to requiring that $U$ contains $\pi_F^{-1}(V)$ for some $V\subset F^\perp$ with half measure, which would be true if $U$ almost carries the full measure.

This remark is quite convenient in situations where one can a priori bound the metric in the generic region of $M$.

\end{rmk}

\begin{rmk}
The above Theorem works for integral K\"ahler classes, but for irrational classes on projective manifolds it is often easy to reduce to the above case.
For instance, consider $M$ a complex submanifold of fixed degree inside a projective manifold $M'\subset \mathbb{CP}^N$. Take an arbitrary fixed K\"ahler class $\chi$ on $M'$, and consider K\"ahler metrics $\omega$ on $M$ in the class $\chi|_M$. We assume on a large enough subset of $M$ that
\[
\int d_{\omega+ \omega_{FS}}(y,y') \omega_{FS}^n(y) \omega_{FS}^n(y') \lesssim 1,
\]
and claim that there exists a uniform bound for all $(M, \omega)$ of the shape
\[
\int_{M\times M} \omega_{FS}^n(y) \omega_{FS}^n(y') \exp(   \frac{ d_\omega(y,y')  }{C  }      ) \leq C',
\]
To see this, we find a large integral multiple $m$, such that $mc_1(\mathcal{O}(1))- \chi$ is a K\"ahler class on $M$, and we choose a K\"ahler representative $\omega'$. Now $\omega'$ is bounded by some constant times $\omega_{FS}$. We can use the Theorem to get an exponential integrability bound for the distance function of $\omega+\omega'$. But it is obvious that distance functions increase with the metric, hence the claim.
\end{rmk}

We can now return to the main setting \ref{Setting}.

\begin{cor}
In the setting \ref{Setting}, there is a uniform exponential integrability bound for all fibres $X_y$ and for $0<t\ll 1$:
\begin{equation*}
\int_{X_y\times X_y}  \exp(  \frac{ d_{ t^{-1}\tilde{\omega}_t }(z,z')   }{C  }       ) \omega_X^{n-1} (z) \omega_X^{n-1} (z') \leq C'.
\end{equation*}
\end{cor}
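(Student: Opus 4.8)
The plan is to deduce the corollary directly from Theorem~\ref{Uniformexpintegrabilitythm} applied \emph{fibrewise}, with the main work being to produce the hypothesis (\ref{L1distancebound}) uniformly in $y$ and $t$. First I would fix a projective embedding: since $[\omega_X]$ need not be rational, I invoke the last remark above (or rather its mechanism) to pass from the class $[\omega_X]$ on $X_y$ to an ambient very ample class on $X$. Concretely, embed $X\subset \mathbb{P}^N$ by a large multiple $L=mc_1(\mathcal{O}(1))$ of the hyperplane class such that $mc_1(\mathcal{O}(1))-[\omega_X]$ is K\"ahler, so each fibre $X_y$ sits in $\mathbb{P}^N$ with degree bounded by a number depending only on the family (the Hilbert scheme of fibres is of bounded degree, by properness of $\pi$ and flatness). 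This uniform bound on $n,N,d$ is exactly what makes the constants in Theorem~\ref{Uniformexpintegrabilitythm} uniform over all $X_y$. One then compares $t^{-1}\tilde\omega_t|_{X_y}$ with $t^{-1}\tilde\omega_t|_{X_y}+\omega_{FS}|_{X_y}$; since distance functions only increase under enlarging the metric, and since $m\omega_{FS}-[\omega_X]$-type corrections are bounded, it suffices to bound the $L^1$ distance for a metric of the form $t^{-1}\tilde\omega_t|_{X_y}+\omega_{FS}|_{X_y}$, which lies in a fixed integral class $c_1(\mathcal{O}(1))|_{X_y}$ up to the harmless extra factor, putting us squarely in the hypothesis of the theorem.

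The heart of the matter is establishing
\[
\int_{X_y\times X_y} d_{t^{-1}\tilde\omega_t}(z,z')\,\omega_X^{n-1}(z)\,\omega_X^{n-1}(z') \leq A
\]
with $A$ independent of $y$ and $t$. Here I would use the uniform fibre diameter bound (\ref{uniformdiameterbound}) itself --- which is Theorem~\ref{uniformdiameterboundthm}, already proved (or being proved) in the paper --- but one must be careful: the diameter bound (\ref{uniformdiameterbound}) is stated for paths in $X$, not necessarily inside $X_y$, whereas the distance function $d_{t^{-1}\tilde\omega_t}$ in the corollary should be the \emph{ambient} distance $d_{t^{-1}\tilde\omega_t}$ on $X$ restricted to pairs of fibre points (this is consistent with the convention declared after (\ref{uniformdiameterbound})). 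So the $L^1$ bound is immediate: $d_{t^{-1}\tilde\omega_t}(z,z')\leq \mathrm{diam}(X_y, t^{-1}\tilde\omega_t)\leq C$ pointwise, and $\int_{X_y}\omega_X^{n-1}=1$, giving $A\leq C$. (If instead one wants the intrinsic fibre distance in the corollary, one would note that along smooth fibres the two are comparable by Corollary~\ref{smoothboundfibrewise} and a chaining argument through the region $\{H\gtrsim 1\}$, but for the stated corollary the ambient distance suffices and the remark about "not necessarily contained in $X_y$" makes this legitimate.)

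With the hypothesis in hand, Theorem~\ref{Uniformexpintegrabilitythm} gives
\[
\int_{X_y\times X_y} \exp\!\Big(\frac{d_{\omega}(z,z')}{C(n)d^2}\Big)\,\omega_{FS}^{n-1}(z)\,\omega_{FS}^{n-1}(z') \leq e^{C(n,N,d)A}
\]
for $\omega = t^{-1}\tilde\omega_t|_{X_y}+\omega_{FS}|_{X_y}$; since $d_{t^{-1}\tilde\omega_t}\leq d_\omega$ and since $\omega_{FS}|_{X_y}$ and $\omega_X|_{X_y}$ are uniformly comparable (both are restrictions of fixed smooth forms on $X$, with ratio bounded above and below on the compact $X$ independently of $y$), I can replace $\omega_{FS}^{n-1}$ by $\omega_X^{n-1}$ at the cost of a fixed multiplicative constant on the measure, and absorb it together with $A\leq C$ into the final constants $C, C'$. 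The main obstacle --- and the only genuinely delicate point --- is the uniformity of the degree $d$ and of the Jacobian/projection constants in Theorem~\ref{Uniformexpintegrabilitythm} as $y$ varies, \emph{including} as $y$ approaches a point of $S$ where $X_y$ becomes singular; here one uses that the fibres of $\pi$ form a bounded family in the Hilbert scheme and that Theorem~\ref{Uniformexpintegrabilitythm}'s proof (via the Hilbert scheme compactness argument already invoked there) is insensitive to the fibres acquiring canonical singularities, since that argument only uses the degree and the ambient geometry, not smoothness of $M$.
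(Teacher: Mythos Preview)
Your argument is circular. You invoke Theorem~\ref{uniformdiameterboundthm} (the uniform fibre diameter bound) to verify the $L^1$ hypothesis (\ref{L1distancebound}), but this corollary is precisely the input needed to \emph{prove} Theorem~\ref{uniformdiameterboundthm}: in Section~\ref{Diameterestimatessection} the logical chain is this corollary $\Rightarrow$ the next corollary $\Rightarrow$ Prop.~\ref{exponentialintegrability2} $\Rightarrow$ Theorem~\ref{uniformdiameterboundthm}. So the uniform diameter bound is not available at this point, and your verification of (\ref{L1distancebound}) collapses. Your parenthetical ``already proved (or being proved)'' is exactly the problem.

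The paper obtains the required $L^1$ control \emph{without} any diameter information, using only the pointwise metric estimate of Cor.~\ref{upperboundonmetric}: $t^{-1}\tilde\omega_t|_{X_y}\leq CH^{-1}\omega_X$. On the large-measure subset $\{H\gtrsim 1\}\cap X_y$ this gives $t^{-1}\tilde\omega_t\leq C\omega_X$, hence bounded distances there. Then the first Remark after Theorem~\ref{Uniformexpintegrabilitythm}---that an $L^1$ bound on a subset carrying $(1-\epsilon)$ of the Fubini--Study measure already suffices---combined with the second Remark on irrational classes, finishes the proof. You correctly identified the second Remark's mechanism, but missed that the first Remark is what replaces the diameter bound; this is the whole reason that Remark was recorded.

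A secondary issue: the distance in this corollary is the \emph{intrinsic} fibre distance for $t^{-1}\tilde\omega_t|_{X_y}$ (see the Remark following Prop.~\ref{exponentialintegrability2}), not the ambient distance on $X$. Even if (\ref{uniformdiameterbound}) were available, it bounds only the ambient distance, so your pointwise inequality $d_{t^{-1}\tilde\omega_t}(z,z')\leq C$ would not directly give what the corollary asserts.
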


\begin{proof}
It suffices to prove this for all smooth fibres uniformly. By  Cor. \ref{upperboundonmetric}, the fibrewise metric has an upper bound 
$t^{-1}\tilde{\omega}_t \leq CH^{-1} \omega_X$. Now on any fibre $X_y$, given a prescribed percentage $1-\epsilon$, we can find a subset with at least $1-\epsilon$ of the $\omega_X^{n-1}$-measure, and demand $H$ is bounded below on this subset. Since $\omega_X$ is uniformly equivalent to the Fubini-Study metric, the claim follows from the Remarks above.
\end{proof}

The following Corollary asserts that modulo exponentially small probability, 
any point on $X_y$ is within $O(1)$-distance to the regular region $\{H\gtrsim 1\} \cap X_y$.

\begin{cor}
In the same setting, there are uniform constants such that
\[
\int_{X_y}  \exp(  \frac{ d_{ t^{-1}\tilde{\omega}_t }(z,\{ H\gtrsim 1  \}\cap X_y)   }{C  }       ) \omega_X^{n-1} (z)  \leq C'.
\]
\end{cor}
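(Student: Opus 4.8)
The plan is to derive this Corollary from the preceding one by a straightforward probabilistic/triangle-inequality argument. The previous Corollary gives a uniform exponential integrability bound for the pairwise distance $d_{t^{-1}\tilde\omega_t}(z,z')$ over $X_y\times X_y$ with respect to the product measure $\omega_X^{n-1}(z)\,\omega_X^{n-1}(z')$. The key observation is that the regular region $U_y=\{H\gtrsim 1\}\cap X_y$ carries a \emph{definite fraction} of the $\omega_X^{n-1}$-measure, uniformly in $y$ and $t$: indeed $H$ is (up to scale) independent of $t$, and near a given singular fibre the set $\{H\le\epsilon\}$ has $\omega_X^{n-1}$-measure going to $0$ with $\epsilon$ (this is essentially the $L^p$ volume bound of Proposition \ref{atworstcanonical}(3), or more elementarily just that $H$ vanishes on a proper analytic subset); in any case by the argument in the proof of the previous Corollary we may fix $\epsilon_0>0$ so that $\omega_X^{n-1}(U_y)\geq \epsilon_0$ for all relevant $y$.

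First I would record the elementary pointwise inequality
\[
d_{t^{-1}\tilde\omega_t}(z,U_y)\leq d_{t^{-1}\tilde\omega_t}(z,z')\quad\text{for every }z'\in U_y,
\]
so that, integrating $\exp(\tfrac{1}{2C}d_{t^{-1}\tilde\omega_t}(z,U_y))$ against $\omega_X^{n-1}(z')$ over $z'\in U_y$ and using that the left side does not depend on $z'$, one gets
\[
\epsilon_0\exp\Bigl(\tfrac{1}{2C}d_{t^{-1}\tilde\omega_t}(z,U_y)\Bigr)\leq \int_{U_y}\exp\Bigl(\tfrac{1}{2C}d_{t^{-1}\tilde\omega_t}(z,z')\Bigr)\omega_X^{n-1}(z')\leq \int_{X_y}\exp\Bigl(\tfrac{1}{2C}d_{t^{-1}\tilde\omega_t}(z,z')\Bigr)\omega_X^{n-1}(z').
\]
Here I use that $d(z,U_y)\le d(z,z')$ to replace the infimum, and I have quietly replaced the constant $C$ from the previous Corollary by $2C$ so as to have room to spare in the next, Cauchy–Schwarz, step. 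Then I would integrate this inequality over $z\in X_y$ against $\omega_X^{n-1}(z)$ and swap the order of integration on the right, landing on
\[
\epsilon_0\int_{X_y}\exp\Bigl(\tfrac{1}{2C}d_{t^{-1}\tilde\omega_t}(z,U_y)\Bigr)\omega_X^{n-1}(z)\leq \int_{X_y\times X_y}\exp\Bigl(\tfrac{1}{2C}d_{t^{-1}\tilde\omega_t}(z,z')\Bigr)\omega_X^{n-1}(z)\,\omega_X^{n-1}(z').
\]
The right-hand side is bounded by $C'$ thanks to the previous Corollary (applied with the constant $C$ there replaced by $2C$, which is harmless since enlarging the constant only makes the exponential integrability easier), and dividing by $\epsilon_0$ gives the claim with the new constants $\tilde C=2C$ and $\tilde C'=C'/\epsilon_0$.

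There is essentially no serious obstacle here; the only point requiring a line of care is the uniform lower bound $\omega_X^{n-1}(U_y)\ge\epsilon_0$, which must hold with a constant independent of both $t$ (trivial, as $H$ and $\omega_X$ are $t$-independent) and of $y$ as $y$ ranges near a fixed singular fibre. For the latter I would invoke the continuity statements in Proposition \ref{atworstcanonical} — the measure $\omega_X^{n-1}(\{H\le\epsilon\}\cap X_y)$ depends continuously on $y$ and tends to $0$ as $\epsilon\to 0$ uniformly on a compact neighbourhood of the singular fibre (this is exactly the kind of estimate underlying Proposition \ref{atworstcanonical}(3), obtained by passing to a log resolution) — exactly as was already used in the proof of the previous Corollary. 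Everything else is Cauchy–Schwarz and Fubini.
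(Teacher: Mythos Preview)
Your argument is correct and follows essentially the same approach as the paper: both use that the regular set $U_y=\{H\gtrsim 1\}\cap X_y$ carries a definite fraction of the $\omega_X^{n-1}$-measure, together with the pointwise inequality $d(z,U_y)\leq d(z,z')$ for $z'\in U_y$, and then integrate against the previous Corollary. The paper phrases the averaging step via Jensen's inequality for the exponential, whereas you apply the pointwise inequality directly inside the exponential before integrating; your version is in fact slightly cleaner, and the detour through $2C$ and the mention of Cauchy--Schwarz are unnecessary (since $d\geq 0$ gives $\exp(d/(2C))\leq \exp(d/C)$ trivially).
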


\begin{proof}
By the Jensen inequality applied to the exp function, using also that $\int_{X_y\cap \{ H\gtrsim 1  \} } \omega_X^{n-1} \geq \frac{1}{2}$,
\[
\begin{split}
LHS \leq & \int_{X_y  }  \exp( C^{-1}\int_{ ( \{ H\gtrsim 1  \}\cap X_y )  }   d_{ t^{-1}\tilde{\omega}_t }(z,z')\omega_X^{n-1}(z')        ) \omega_X^{n-1} (z)  
\\
\leq &  \int_{X_y \times ( \{ H\gtrsim 1  \}\cap X_y ) }  \exp(   \frac{ d_{ t^{-1}\tilde{\omega}_t }(z,z')  }{C  }       ) \omega_X^{n-1} (z) \omega_X^{n-1}(z')
\\
\leq  & C'.
\end{split}
\]
Here $C$ changes from line to line as usual.
\end{proof}

However, what we need is the fibrewise Calabi-Yau volume measure, not some Fubini-Study type measure.

\begin{prop}\label{exponentialintegrability2}
In the same setting, there are uniform constants such that
\[
\int_{X_y}  \exp(  \frac{ d_{ t^{-1}\tilde{\omega}_t }(z,\{ H\gtrsim 1  \}\cap X_y)   }{C  }       ) i^{(n-1)^2} \Omega_y\wedge \overline{\Omega}_y  \leq C'.
\]	
\end{prop}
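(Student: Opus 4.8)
The idea is to deduce Proposition~\ref{exponentialintegrability2} from the preceding Corollary (the exponential integrability with respect to the Fubini--Study type measure $\omega_X^{n-1}$) by a single H\"older interpolation against the uniform $L^p$-bound on the relative volume density supplied by Proposition~\ref{atworstcanonical}(3). Write
\[
g_y=\frac{i^{(n-1)^2}\Omega_y\wedge\overline{\Omega}_y}{\omega_X^{n-1}},\qquad d(z)=d_{t^{-1}\tilde\omega_t}\big(z,\{H\gtrsim 1\}\cap X_y\big),
\]
so that the measure appearing in the statement is $g_y\,\omega_X^{n-1}$. By Proposition~\ref{atworstcanonical}(3) there is a fixed exponent $p>1$ and a constant $C$, uniform in $y$ around a given singular fibre (and, after the usual covering and compactness-in-$y$ argument, uniform on $Y$), such that $\norm{g_y}_{L^p(\omega_X^{n-1})}\le C$. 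Let $q=p/(p-1)$ be the conjugate exponent; note $p$, hence $q$, depends only on the at-worst-canonical geometry, not on $t$ or $y$.

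The main step is then H\"older's inequality on the fibre $X_y$ with respect to the measure $\omega_X^{n-1}$. For a constant $C'$ to be fixed in a moment,
\[
\int_{X_y} e^{d/C'}\, i^{(n-1)^2}\Omega_y\wedge\overline{\Omega}_y
=\int_{X_y} e^{d/C'}\, g_y\,\omega_X^{n-1}
\le\Big(\int_{X_y} e^{qd/C'}\,\omega_X^{n-1}\Big)^{1/q}\norm{g_y}_{L^p(\omega_X^{n-1})}.
\]
Now choose $C'=qC_0$, where $C_0$ is the constant appearing in the preceding Corollary. Since $d\ge 0$ and $q\ge 1$ we have $qd/C'=d/C_0$, so the first factor is bounded by $(C_0')^{1/q}$ by that Corollary, where $C_0'$ is the corresponding right-hand side. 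Combining with the $L^p$-bound from Proposition~\ref{atworstcanonical}(3) gives
\[
\int_{X_y} \exp\!\Big(\frac{d_{t^{-1}\tilde\omega_t}(z,\{H\gtrsim 1\}\cap X_y)}{qC_0}\Big)\, i^{(n-1)^2}\Omega_y\wedge\overline{\Omega}_y\le C^{1/p}\,(C_0')^{1/q},
\]
which is the claim after relabelling $qC_0$ as $C$ and $C^{1/p}(C_0')^{1/q}$ as $C'$.

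This argument is a routine interpolation and I do not expect a genuine obstacle; the only point requiring care is that \emph{both} input bounds hold uniformly in the fibre parameter $y$, including across singular fibres and uniformly in the collapsing parameter $t$ --- the exponential $\omega_X^{n-1}$-integrability of the distance to the regular region from the preceding Corollary, and the uniform $L^p$-bound on $g_y$ from Proposition~\ref{atworstcanonical}(3) (which itself relies on passing to a log resolution). Granting those, the conclusion follows immediately.
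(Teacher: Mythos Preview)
Your proof is correct and follows exactly the paper's approach: the paper's proof is the one-line ``Combine item 3 of Prop.~\ref{atworstcanonical} with the above Corollary, and apply H\"older inequality,'' and you have simply written out that H\"older step in full, choosing the exponential constant as $qC_0$ so that the $L^q$ factor matches the previous Corollary.
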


\begin{proof}
Combine item 3 of Prop. \ref{atworstcanonical} with the above Corollary, and apply H\"older inequality.
\end{proof}

\begin{rmk}
Here we are working with the distance functions on $X_y$ induced by the restriction of $\frac{1}{t} \tilde{\omega}_t$ to $X_y$. We can 
also study the distance function of $\frac{1}{t} \tilde{\omega}_t$ on $X$ and restrict it to $X_y$. This function would be smaller, because the minimal geodesics do not need to be contained in $X_y$. Hence the distance bound can only be better for the latter function, which is what we will use in the next section.
\end{rmk}

\subsection{Uniform fibre diameter bound}

We will now bridge the exponentially small gap between Prop. \ref{exponentialintegrability2} and the uniform fibre diameter bound (\ref{uniformdiameterbound}).

\begin{proof} (Thm. \ref{uniformdiameterboundthm})
Take any point $P$ on $X_y$. All distances appearing below are computed on $X$, not on fibres. Let $r$ be the smallest number such that \[
\text{dist}_{ t^{-1} \tilde{\omega}_t  } ( B_{ t^{-1} \tilde{\omega}_t  }(P, r) , \{  H\gtrsim 1\}\subset X  ) \leq r.
 \]
This exists because the diameter of $X$ is finite (an a priori bound is known but not necessary). If $r\leq 1$, then since $t^{-1}\omega_t$ is uniformly equivalent to $t^{-1}\tilde{\omega}_t$ in $\{ H\gtrsim 1 \}$ (\cf (\ref{uniformequivalence})), we can join $P$ to $\{ H\gtrsim 1  \}\cap X_y$ within $O(1)$-distance, and we are done. So without loss of generality $r\geq 1$. The minimality of $r$ shows that in fact
\begin{equation}\label{criticalball}
\text{dist}_{ t^{-1} \tilde{\omega}_t  } ( B_{ t^{-1} \tilde{\omega}_t  }(P, r) , \{  H\gtrsim 1\}\subset X  ) = r.
\end{equation}
Our strategy is to derive two contrasting bounds on the volume of $B_{t^{-1} \tilde{\omega}_t} (P,r)$.

By Prop. \ref{omega0isbounded}, up to a constant factor the projection $\pi: X\to Y$ decreases distance, so 
\[
\pi (  B_{t^{-1} \tilde{\omega}_t} (P,r)  ) \subset B_{t^{-1}\omega_Y} ( \pi(P), Cr   ) \subset Y.
\]
By Prop. \ref{exponentialintegrability2} and the ensuing Remark,
\[
\begin{split}
& \int_{ \pi^{-1} ( B_{t^{-1}\omega_Y} ( \pi(P), Cr   ) )   } \exp(  \frac{ d_{ t^{-1}\tilde{\omega}_t }(z,\{ H\gtrsim 1  \}\cap X_y)   }{C  }       ) i^{n^2} \Omega\wedge \overline{\Omega}
\\
=
& \int_{ B_{t^{-1}\omega_Y} ( \pi(P), Cr   )  } \sqrt{-1}dy\wedge d\bar{y} \int_{X_y}  \exp(  \frac{ d_{ t^{-1}\tilde{\omega}_t }(z,\{ H\gtrsim 1  \}\cap X_y)   }{C  }       ) i^{(n-1)^2} \Omega_y\wedge \overline{\Omega}_y 
\\
\lesssim &  \int_{B_{t^{-1}\omega_Y} ( \pi(P), Cr   )  } \sqrt{-1}dy\wedge d\bar{y}
\\
\lesssim & r^2t.
\end{split}
\]
But from (\ref{criticalball}), the distance function in the exponent above is bounded below by $r$ on $B_{ t^{-1} \tilde{\omega}_t  }(P, r)$. This forces
\begin{equation*}
\int_{ B_{t^{-1} \tilde{\omega}_t} (P,r)}i^{n^2} \Omega\wedge \overline{\Omega} \lesssim r^2 t e^{-C^{-1} r} ,
\end{equation*}
or equivalently
\begin{equation}\label{criticalballlower}
Vol_{ t^{-1} \tilde{\omega}_t }(   B_{t^{-1} \tilde{\omega}_t} (P,r)     ) \lesssim r^2  e^{-C^{-1} r}
\end{equation}

On the other hand, the ball $B_{t^{-1} \tilde{\omega}_t} (P,2r)$ touches the regular region $\{ H\gtrsim 1 \}$ where $\tilde{\omega}_t$ is uniformly equivalent to $\omega_t$, whence by using the freedom to travel in the regular region,
\[
Vol_{ t^{-1} \tilde{\omega}_t }(   B_{t^{-1} \tilde{\omega}_t} (P,3r)     )\gtrsim r^2.
\]
Since $X$ is Ricci-flat, Bishop-Gromov inequality gives
\begin{equation}\label{criticalballupper}
Vol_{ t^{-1} \tilde{\omega}_t }(   B_{t^{-1} \tilde{\omega}_t} (P,r)     ) \gtrsim r^2 .
\end{equation}
Contrasting (\ref{criticalballlower})(\ref{criticalballupper}) gives $r\lesssim 1$, and we are done.
\end{proof}


\begin{thebibliography}{7}
	
	
	
	


\bibitem{Cheeger} 
Cheeger, Jeff. Degeneration of Riemannian metrics under Ricci curvature bounds. Lezioni Fermiane. [Fermi Lectures] Scuola Normale Superiore, Pisa, 2001.




	
\bibitem{Ko} 
Demailly, Jean-Pierre; Dinew, Sławomir; Guedj, Vincent; Pham, Hoang Hiep; Kołodziej, Sławomir; Zeriahi, Ahmed. Hölder continuous solutions to Monge-Ampère equations. J. Eur. Math. Soc. (JEMS) 16 (2014), no. 4, 619--647.




\bibitem{De-Pa} 

Demailly, Jean-Pierre; Pali, Nefton. Degenerate complex Monge-Ampère equations over compact Kähler manifolds. Internat. J. Math. 21 (2010), no. 3, 357--405.




\bibitem{DonSun} 


Donaldson, Simon; Sun, Song. Gromov-Hausdorff limits of Kähler manifolds and algebraic geometry. Acta Math. 213 (2014), no. 1, 63--106.



\bibitem{Eleonora} 

Di Nezza, Eleonora; Guedj, Vincent; Guenancia,  Henri. Families of singular Kähler-Einstein metrics. 	arXiv:2003.08178. 



\bibitem{EGZ} 
Eyssidieux, Philippe; Guedj, Vincent; Zeriahi, Ahmed. Singular Kähler-Einstein metrics. J. Amer. Math. Soc. 22 (2009), no. 3, 607--639.




\bibitem{EGZ2} 

Eyssidieux, Philippe; Guedj, Vincent; Zeriahi, Ahmed. A priori $L^\infty$-estimates for degenerate complex Monge-Ampère equations. Int. Math. Res. Not. IMRN 2008, Art. ID rnn 070, 8 pp.





\bibitem{Guo}
Guo, Bin. Kähler-Ricci flow on blowups along submanifolds. Math. Ann. 375 (2019), no. 3-4, 1147--1167.



\bibitem{JianSong} 

Fu, Xin; Guo, Bin; Song, Jian. Geometric estimates for complex Monge-Ampère equations. J. Reine Angew. Math. 765 (2020), 69--99.



\bibitem{Song} Song, Jian. Riemannian geometry of Kahler-Einstein currents. 	arXiv:1404.0445.





\bibitem{TianZhang} 

Song, Jian; Tian, Gang; Zhang, Zhenlei. Collapsing behavior of Ricci-flat Kahler metrics and long time solutions of the Kahler-Ricci flow. 	arXiv:1904.08345.











\bibitem{GilbargTrudinger} 

Gilbarg, David; Trudinger, Neil S. Elliptic partial differential equations of second order. Reprint of the 1998 edition. Classics in Mathematics. Springer-Verlag, Berlin, 2001. xiv+517 pp. ISBN: 3-540-41160-7





\bibitem{GrossToZhang} 


Gross, Mark; Tosatti, Valentino; Zhang, Yuguang. Collapsing of abelian fibered Calabi-Yau manifolds. Duke Math. J. 162 (2013), no. 3, 517--551.



\bibitem{GrossToZhang2}
Gross, Mark; Tosatti, Valentino; Zhang, Yuguang. Gromov-Hausdorff collapsing of Calabi-Yau manifolds. Comm. Anal. Geom. 24 (2016), no. 1, 93--113.





\bibitem{GrossWilson} 


Gross, Mark; Wilson, P. M. H. Large complex structure limits of $K3$ surfaces. J. Differential Geom. 55 (2000), no. 3, 475--546. 











\bibitem{Hein} 

Hein, Hans-Joachim. Weighted Sobolev inequalities under lower Ricci curvature bounds. Proc. Amer. Math. Soc. 139 (2011), no. 8, 2943--2955.







\bibitem{HeinTosatti} 

Hein, Hans-Joachim; Tosatti, Valentino.
Higher-order estimates for collapsing Calabi-Yau metrics. 	arXiv:1803.06697.




\bibitem{Li}  
Li, Yang. A new complete Calabi-Yau metric on $\C^3$. Invent. Math. 217 (2019), no. 1, 1--34.


\bibitem{Licollapsing} Li, Yang. On collapsing Calabi-Yau fibrations. accepted by Journal of Differential Geometry.


\bibitem{Ligluing} 

Li, Yang. A gluing construction of collapsing Calabi-Yau metrics on K3 fibred 3-folds. Geom. Funct. Anal. 29 (2019), no. 4, 1002--1047.







\bibitem{Zhang} 

Rong, Xiaochun; Zhang, Yuguang. Continuity of extremal transitions and flops for Calabi-Yau manifolds. Appendix B by Mark Gross. J. Differential Geom. 89 (2011), no. 2, 233--269.




\bibitem{ChernLu} 


Rubinstein, Yanir A. Smooth and singular Kähler-Einstein metrics. Geometric and spectral analysis, 45--138, Contemp. Math., 630, Centre Rech. Math. Proc., Amer. Math. Soc., Providence, RI, 2014. 




\bibitem{Takayama} 
Takayama, Shigeharu. On moderate degenerations of polarized Ricci-flat Kähler manifolds. J. Math. Sci. Univ. Tokyo 22 (2015), no. 1, 469--489.




\bibitem{Tianalpha} 
Tian, Gang; Yau, Shing-Tung. Kähler-Einstein metrics on complex surfaces with $C_1>0$. Comm. Math. Phys. 112 (1987), no. 1, 175--203.








\bibitem{To} 



Tosatti, Valentino. Adiabatic limits of Ricci-flat Kähler metrics. J. Differential Geom. 84 (2010), no. 2, 427--453.




\bibitem{TosattiWeinkoveYang} 

Tosatti, Valentino; Weinkove, Ben; Yang, Xiaokui. The Kähler-Ricci flow, Ricci-flat metrics and collapsing limits. Amer. J. Math. 140 (2018), no. 3, 653--698.



\bibitem{TosattiZhang} 

Tosatti, Valentino; Zhang, Yuguang. Infinite-time singularities of the Kähler-Ricci flow. Geom. Topol. 19 (2015), no. 5, 2925--2948. 


	
\bibitem{Yau}
	
 Yau, Shing Tung. On the Ricci curvature of a compact Kähler manifold and the complex Monge-Ampère equation. I. Comm. Pure Appl. Math. 31 (1978), no. 3, 339--411.
	
	
	
\end{thebibliography}
\end{document}